\newtheorem{theorem}{Theorem}[section]
\newtheorem{definition}[theorem]{Definition}
\newtheorem{lemma}[theorem]{Lemma}
\newtheorem{proposition}[theorem]{Proposition}
\newtheorem{corollary}[theorem]{Corollary}
\newtheorem{fact}[theorem]{Fact}
\newcommand{\Ob}{\mathrm{Ob}}
\newcommand{\C}{\mathcal{C}}
\newcommand{\D}{\mathcal{D}}
\newcommand{\E}{\mathcal{E}}
\newcommand{\Set}{\mathbf{Set}}
\newcommand{\SSet}{\mathbf{SSet}}
\newcommand{\Cat}{\mathbf{Cat}}
\newcommand{\Diag}{\mathrm{Diag}}
\newcommand{\tb}{|\!|}
\newcommand{\thb}{|\!|\!|}
\newcommand{\ner}{{\mathrm{N}}}
\newcommand{\W}{{\overline{W}}}
\begin{document}
\title{Homotopy colimits of 2-functors}

\author{A.M. Cegarra, B.A.  Heredia}
\thanks{This work has been supported by DGI
of Spain, Project MTM2011-22554. Also, the second author by FPU
grant AP2010-3521.}

\address{\newline
Departamento de \'Algebra, Facultad de Ciencias, Universidad de
Granada.
\newline 18071 Granada, Spain \newline
 acegarra@ugr.es\ baheredia@ugr.es }

 \subjclass[2000]{18D05, 55P15, 18F25}

\keywords{2-categories, classifying space, homotopy colimit}

\begin{abstract} Like categories, small 2-categories have
well-understood classifying spaces. In this paper, we deal with
homotopy types represented by 2-diagrams of 2-categories. Our
results extend to homotopy colimits of 2-functors lower categorical
analogues that have been classically used in algebraic topology and
algebraic K-theory, such as the Homotopy Invariance Theorem (by
Bousfield and Kan), the Homotopy Colimit Theorem (Thomason),
Theorems A and B (Quillen), or the Homotopy Cofinality Theorem
(Hirschhorn).

\vspace{0.3cm}
\begin{center}{\em Dedicated to Ronald Brown on his 80th
birthday}\end{center}
\end{abstract}

\maketitle
\section{Introduction}
A 2-category is a category $C$ in which the morphism sets $C(c,c')$
are categories and the composition  is a functor. Every small
2-category $C$ has associated a simplicial category  $\ner
C:\Delta^{\mathrm{op}}\to \Cat$, called its nerve, with category of
$p$-simplices
$$
\coprod_{c_0,\ldots,c_p}\hspace{-0.1cm}
C(c_0,c_1)\times C(c_1,c_2)\times\cdots\times C(c_{p-1},c_p),
$$
whose Segal geometric realization is, by definition, the classifying
space of the 2-category. Recently, there has been rising interest in
the relation between 2-categories and the homotopy types of their
classifying spaces; see, for example,  the papers by  Baas,
B\"{o}kstedt and Kro ~\cite{BBT2012}, Bullejos and Cegarra
~\cite{B-C2003,Cegarra-2011}, Chiche ~\cite{Chich2014,Chich2015,
Chich2015-2}, del Hoyo ~\cite{delHoyo2012} and, particularly, the
paper by Ara and Maltsiniotis ~\cite{AM2014}, where it is fully
proved that the category $\mathbf{2}\Cat$, of small 2-categories and
2-functors, has a Thomason model structure (as  first
announced by Worytkiewicz, Hess, Parent and Tonks in
~\cite{WHPT2007}) such that the classifying space functor is an
equivalence of homotopy theories between 2-categories and
topological spaces.

This paper focuses on the study of homotopy colimits of 2-functors
$\D:C\to \underline{\mathbf{2}\Cat}$, from an indexing 2-category
$C$ into the 2-category $\underline{\mathbf{2}\Cat}$ of
2-categories, 2-functors, and 2-natural transformations, and our
results extend lower categorical analogues
classically used in algebraic topology and algebraic K-theory. We
shall stress that the main difference with the ordinary case of
functors $C\to \Cat$, where $C$ is a category, is that now there are
2-cells $\phi:p\Rightarrow p'$ in $C$ that produce  2-natural
transformations $\D\phi:\D p\Rightarrow \D p'$, and therefore
homotopies between the induced maps on classifying spaces by the
associated 2-functors $\D p$ and $\D p'$, which must be taken into
account. The fundamental ``homotopy colimit" construction we deal
with
 functorially assigns to each 2-functor
$\D:C\to \underline{\mathbf{2}\Cat}$ a simplicial 2-category
$\mathrm{hocolim}_C\D:\Delta^{\mathrm{op}}\to \mathbf{2}\Cat$, whose
category of $p$-simplices is
\begin{equation}\label{int-2}
\coprod_{c_0,\ldots,c_p}\hspace{-0.1cm} \D_{c_0}\!\times
C(c_0,c_1)\times C(c_1,c_2)\times\cdots\times C(c_{p-1},c_p).
\end{equation}
In the case where $C$ is a category and $\D:C\to
 \Cat$ is a functor into the category of categories and functors, one gets
 the usual definition of homotopy
 colimit of a diagram of (nerves of) categories by Bousfield and Kan
 ~\cite[Chp. XII]{Busf-Kan1972}. Also, for $C$ a 2-category
 and $\D:C\to \underline{\Cat}\subseteq \underline{\mathbf{2}\Cat}$ a 2-functor
 into the category of categories, functors, and
 natural transformations, one recovers the homotopy colimit
 of a 2-diagram of categories by Hinich and Schechtman
 in ~\cite{Hinich1985}.

Interesting 2-diagrams of 2-categories naturally arise from basic
problems in  homotopy theory of 2-categories. For example, the
analysis of the homotopy fibres of the map  induced on
classifying spaces by a 2-functor $F:A\to C$ leads to the study of the
2-functor $F\!\downarrow\!-:C\to \underline{\mathbf{2}\Cat}$, which
associates to each object $c\in C$ the slice 2-category
$F\!\downarrow\!c$ ~\cite{Gray1969}, whose objects are 1-cells in $C$
of the form $p:Fa\to c$. A basic observation here  is that the
forgetful 2-functors $F\!\downarrow \!c\to A$ assemble to define a
simplicial weak equivalence $\xymatrix@C=16pt{\mathrm{hocolim}_C(F\!
\downarrow\!-)\ar@{}@<-2pt>[r]^(.75){\sim}\ar[r]&A}$, so that
$\mathrm{hocolim}_C(F\! \downarrow\!-)$ can be regarded as a
simplicial resolution of the `total' 2-category of the 2-functor
$F$. See ~\cite[Theorem 3.2]{Cegarra-2011}, where Quillen's Theorem B
is generalized for 2-functors, or ~\cite[Th\'eor\`{e}m
2.34]{Chich2015}, where a relative Quillen's Theorem A for
2-functors is given.

There is another source for 2-diagrams of 2-categories: The study
and classification of cofibred 2-categories. The well-known
Grothendieck correspondence between covariant pseudo-functors and
cofibred categories ~\cite{Grothendieck1971} has been generalized to
bicategories by Bakovi\'c ~\cite{Bakovic2011} and Buckley
~\cite{Bakovic2011}. The latter authors, in particular, prove that there is a
``Grothendieck construction" on 2-functors $\D:C\to
\underline{\mathbf{2}\Cat}$ that gives rise to 2-categories
$\int_C\D$ endowed with a split 2-cofibration over $C$, and this
correspondence $\D\mapsto \int_C\D$ is the function on objects of an
equivalence between the 3-category of 2-functors $\D:C\to
\underline{\mathbf{2}\Cat}$ and the 3-category of split 2-cofibred
2-categories over $C$. With Thomason's Homotopy Colimit Theorem as
its natural precedent, a main result of our paper shows that, for
any 2-functor $\D:C\to \underline{\mathbf{2}\Cat}$, the geometric
realization of the simplicial category $\mathrm{hocolim}_C\D$ has
the homotopy type of the classifying space of the 2-category
$\int_C\D$.

The plan of the paper is as follows. After this introductory
section, the paper is organized into eight sections. Section 2
comprises some notations and a brief review of basic facts
concerning classifying spaces of 2-categories that we are going to
use later. In Section 3, we present the homotopy colimit
construction on 2-functors $\D:C\to \underline{\mathbf{2}\Cat}$ and,
mainly,  show its homotopy invariance property: If $\D\to \E$ is a
2-transformation that is locally a weak equivalence, then the
induced $\mathrm{hocolim}_C\D\to \mathrm{hocolim}_C\E$ is also  a
weak equivalence. In  Section 4, we briefly review the
Grothendieck construction on 2-diagrams of 2-categories. This
construction will be extensively used throughout the paper and
especially in  Section 5, which is fully dedicated to proving
the aforementioned extension of Thomason's Homotopy Colimit Theorem
for 2-diagrams of 2-categories. This is quite a long and technical
section, but crucial for our conclusions. After this, both
 constructions, $\mathrm{hocolim}_C$ and $\int_C-$,  can be interchanged
for homotopy purposes.
 In Section 6, we review basic facts concerning the homotopy-fibre 2-functors
$F\!\downarrow\!-:C\to \underline{\mathbf{2}\Cat}$ associated to
2-functors $F:A\to C$, which are needed in  Sections 7
and 8. In Section 7, we deal with questions such as: when does a
2-transformation $\Gamma:\D\Rightarrow\E$, between 2-functors
$\D,\E:C\to \underline{\mathbf{2}\Cat}$, induce a homotopy left
cofinal 2-functor $\int_C\Gamma:\int_C\D\to\int_C\E$? Or when are
the canonical squares ($c\in\Ob C$, $y\in\Ob \E_c$)
$$\xymatrix{\Gamma_{\!c}\!\downarrow\!y\ar[r]\ar[d]
& \E_c\!\downarrow\!y\ar[d]\\
\int_C\D\ar[r]^{\int_C\Gamma} & \int_C\E}$$  homotopy pullbacks?
Our main results here are actually extensions of the well-known
Quillen's Theorems A and B for functors between categories to
morphisms between 2-diagrams of 2-categories. The final Section 8 is
dedicated to analyzing the behavior of the homotopy colimit
construction when a 2-functor $\D:C\to \underline{\mathbf{2}\Cat}$
is composed with a 2-functor $F:A\to C$. There is a canonical
2-functor $\int_AF^*\D\to \int_C\D$, and we mainly study when this
2-functor is a weak equivalence or, more interestingly, when the
canonical pullback square in $\mathbf{2}\Cat$
$$
\xymatrix{\int_AF^*\D\ar[r]\ar[d]&\int_C\D\ar[d]\\
A\ar[r]^{F}&C }
$$
is a homotopy pullback.

\section{Classifying spaces of 2-categories}\label{1.1}
In Quillen's development of $K$-theory ~\cite{Quillen1973}, the
higher $K$-groups are defined as the homotopy groups of a
topological {\em classifying space\footnote{For `space' we mean a
compactly generated Hausdorff space, and $\mathbf{Top}$ is the
category of these spaces.}} $|C|$ associated to a (small) category
$C$. This space is a CW-complex defined as $$|C|=|\ner C|,$$ the
Milnor geometric realization of the simplicial set termed its
Grothendieck nerve
\begin{equation}\label{GrothendieckNerve}
\ner C: \Delta^{\mathrm{op}}\to \Set,
\ \ [p]\mapsto  \ner_pC=\coprod_{c_{0},\ldots,c_p}\hspace{-0.2cm}
  C(c_{0},c_{1})\times C(c_{1},c_{2})\times\cdots\times C(c_{p-1},c_p),
\end{equation}
 whose $p$-simplices are length $p$
sequences of composable morphisms in $C$ ($\ner_0 C=\Ob C$).

In ~\cite{Segal1974}, Segal extended Milnor's geometric realization
process to simplicial (compactly generated topological) spaces. If
$S:\Delta^{\!{\mathrm{op}}}\to \Cat$ is a simplicial category, by
replacing each category $S_p$ by its classifying space $|S_p|$, one
obtains a simplicial space, $|S|:\Delta^{\!{\mathrm{op}}}\to
\mathbf{Top}$,  whose Segal realization is the {\em classifying
space $\tb S\tb$ of the simplicial category $S$}. By ~~\cite[Lemma in
page 86]{Quillen1973}, there is a natural homeomorphism
$$
\tb S \tb =|[p]\mapsto |S_p||=|[p]\mapsto |[q]\mapsto \ner_qS_p||\cong |\Diag \ner S|,
$$
where $\Diag\ner S:\Delta^{\mathrm{op}}\to \Set$, $[p]\mapsto
\ner_pS_p$, is the simplicial set diagonal of the bisimplicial set
$\ner S:\Delta^{\!{\mathrm{op}}}\times \Delta^{\!{\mathrm{op}}}\to
\Set$, $([p],[q])\mapsto \ner_qS_p $, obtained by composing
$S:\Delta^{\!{\mathrm{op}}}\to \Cat$ with the nerve functor $\ner:
\Cat \to \SSet$ from categories to simplicial sets.

The notion of classifying space of a simplicial category provides
the usual definition of the classifying space of a 2-category. Although
for the general background on 2-categories used in this paper we
refer to ~\cite{borceux1994} and ~\cite{Street1996},  to fix some
notation and terminology, we shall recall that a 2-{\em category}
$C$ is just a category enriched in the category of small categories.
Then, $C$ is a category in which the hom-set between any two objects
$c,c'\in C$ is the set of objects of a category $C(c,c')$, whose
objects $p:c\to c'$ are called 1-{\em cells} and whose arrows are
called {\em $2$-cells} and are denoted by $\alpha:p\Rightarrow p'$
and depicted as
$$\xymatrix @C=8pt {c \ar@/^0.7pc/[rr]^{p} \ar@/_0.7pc/[rr]_{p'}
\ar@{}[rr]|{\Downarrow\!\alpha } & &c'.}$$ Composition in each
category $C(c,c')$, usually referred to as the vertical composition
of 2-cells, is denoted by $\alpha\cdot \beta$. Moreover, the
horizontal composition is a functor
$$\C(c,c')\times\C(c',c'')\overset{\circ\ }\rightarrow \C(c,c'')\hspace{0.5cm}((x,y)\mapsto y\circ x)$$
that is associative and has identities $1_c\in\C(c,c)$.

 For any
2-category $C$,  the nerve construction \eqref{GrothendieckNerve} on
it actually works by giving a simplicial category $\ner C:
\Delta^{\mathrm{op}}\to \Cat$, whose Segal's classifying space is
then the {\em classifying space $\tb C\tb$ of the $2$-category}.
Thus,
$$
\tb C\tb =|\Diag\ner\ner C |,
$$
where $\ner\ner C:\Delta^{\!{\mathrm{op}}}\times
\Delta^{\!{\mathrm{op}}}\to \Set $ is the {\em double nerve} of $C$,
$([p],[q])\mapsto \ner_q\ner_p C$.

Like  $\Cat$, the category  $\mathbf{2}\Cat$, of small 2-categories
and 2-functors, has a Thomason model structure, as first
announced by Worytkiewicz, Hess, Parent and Tonks in
~~\cite{WHPT2007} and fully proved by Ara and Maltsiniotis in
~~\cite[Th\'eo\`{e}me 6.27]{AM2014}, such that the classifying space
functor $C \mapsto \tb C\tb$ is an equivalence of homotopy theories
between
 2-categories and topological spaces. Thus, for example,
a 2-functor $F:A\to C$ is a {\em weak equivalence if and only if the
induced map $\tb F\tb:\tb A\tb\to \tb C\tb$ is a homotopy
equivalence}, and a commutative square of $2$-categories and
$2$-functors
$$
\xymatrix@C=20pt@R=20pt{P\ar[r]\ar[d]&B\ar[d]\\ A\ar[r]&C
 }
$$
{\em is a homotopy pullback if and only if the induced square on
classifying spaces
$$
\xymatrix@C=20pt@R=20pt{\tb P\tb \ar[r]\ar[d]&\tb B\tb \ar[d]\\ \tb A\tb \ar[r]&\tb C\tb
 }
$$
is a homotopy pullback of spaces}. Later, we shall use basic
properties of homotopy pullback squares of spaces, such as the {\em
two out of three property}, etc. (see ~\cite[\S 5]{JChPJ2006} for
instance). In particular, the {\em homotopy-fibre characterization},
which easily leads us to assert that the square of 2-categories
above is a homotopy pullback whenever, for any object $a\in A$,
there is a commutative diagram of 2-categories and 2-functors
$$
\xymatrix{ P_a\ar[r]\ar[d]&P\ar[d]\ar[r]&B\ar[d]\\
A_a\ar[r]^{F_a}&A\ar[r]&C
}
$$
such that  $a\in \mathrm{Im}F_a$,  the space $\tb A_a\tb$ is
contractible, and both the left square and the composite square are
homotopy pullbacks.

We shall call a 2-category $C$ {\em weakly
contractible}\footnote{These are called `aspherical'  by Cisinski in
~\cite{Cisinski2006} and Chiche in ~\cite{Chich2015}.} whenever the
functor from $C$ to the terminal (only one 2-cell) 2-category $C\to
\mathrm{pt}$ is a weak equivalence, that is, if the classifying
space $\tb C\tb$ is contractible.

The following fact will be also used.

\begin{fact}[~\cite{Cegarra-2011} Lemma 2.6]\label{fact1} If two $2$-functors
between $2$-categories $F,G:A\to C$ are related by a lax or oplax
transformation, $F\Rightarrow G$, then there is an induced homotopy,
$\tb F\tb \Rightarrow \tb G\tb$, between the induced maps on
classifying spaces  $\tb F\tb,\tb G\tb :\tb A\tb \to \tb C \tb$.
\end{fact}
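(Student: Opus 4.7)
The plan is to construct an explicit simplicial homotopy between the maps induced on diagonals of the double nerves, $\Diag\ner\ner F,\,\Diag\ner\ner G\colon\Diag\ner\ner A\to\Diag\ner\ner C$, so that geometric realization delivers the required homotopy $\tb F\tb\Rightarrow\tb G\tb$. The core tool is the classical prism decomposition. In the $1$-categorical direction, a chain of composable $1$-cells $a_0\xrightarrow{f_1}a_1\to\cdots\xrightarrow{f_p}a_p$ in $A$, together with the lax data, produces for each $0\le i\le p$ a $(p+1)$-simplex of $\ner C$:
\[
Fa_0\xrightarrow{Ff_1}\cdots\xrightarrow{Ff_i}Fa_i\xrightarrow{\theta_{a_i}}Ga_i\xrightarrow{Gf_{i+1}}\cdots\xrightarrow{Gf_p}Ga_p,
\]
and as $i$ varies these $p+1$ simplices triangulate the prism $\Delta^p\times\Delta^1$.

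First I would record the ingredients: the lax transformation $\theta$ supplies $1$-cells $\theta_a\colon Fa\to Ga$ and $2$-cells $\theta_f\colon Gf\circ\theta_a\Rightarrow\theta_{a'}\circ Ff$, subject to compatibility with identities and composition in $A$. The subtlety is that the face $d_i$ of the $i$-th prism simplex gives the composite $\theta_{a_i}\circ Ff_i$, while $d_i$ of the $(i-1)$-st prism simplex gives $Gf_i\circ\theta_{a_{i-1}}$; these two $1$-cells of $C$ coincide only up to the $2$-cell $\theta_{f_i}$, and this discrepancy is exactly what the second simplicial direction of the double nerve is meant to absorb.

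Accordingly, I would extend the prism construction to the full bisimplicial setting. A diagonal $n$-simplex of $\Diag\ner\ner A$ is an $n$-chain of vertically composable $2$-cells between length-$n$ chains of $1$-cells of $A$; for each $0\le i\le n$ one applies $F$ to the first $i$ columns, $G$ to the last $n-i$ columns, and splices them at column $i$ by inserting the $1$-cell $\theta_{a_i}$, together with a copy of the $2$-cell $\theta_{f_{i+1}}$ at each row where the splice crosses an $f_{i+1}$ (horizontally whiskered by the appropriate identities and vertically composed into the existing stack of $2$-cells). Collecting over $i$ produces a simplicial map $\Diag\ner\ner A\times\Delta^1\to\Diag\ner\ner C$ whose endpoint restrictions are the diagonals of $\ner\ner F$ and $\ner\ner G$.

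The main obstacle is the bookkeeping verification that this assignment respects every face and degeneracy operator. Outer faces ($d_0$ and $d_{n+1}$) restrict to pure $G$- or $F$-data; inner faces acting on a single side are automatic by functoriality of $F$ and $G$; the delicate cases are the faces that contract the splice column, where the coherence axioms $\theta_{gf}=(\theta_g\circ Ff)\cdot(Gg\circ\theta_f)$ together with naturality of $\theta$ under vertical composition of $2$-cells are exactly the identities that force compatibility. A cleaner conceptual repackaging proceeds via the oplax Gray tensor product $A\otimes\mathbf{2}$, where $\mathbf{2}=\{0\to 1\}$: $2$-functors $A\otimes\mathbf{2}\to C$ correspond bijectively to lax transformations $F\Rightarrow G$, and the statement then reduces to the homotopy equivalence $\tb A\otimes\mathbf{2}\tb\simeq\tb A\tb\times I$. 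The oplax case is formally dual, obtained by reversing the direction of the $\theta_f$.
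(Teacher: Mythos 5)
The paper does not actually prove this statement: it is imported as a ``Fact'' from ~\cite{Cegarra-2011}, Lemma 2.6, whose proof runs through the geometric (Duskin--Street) nerve of a $2$-category rather than the double nerve. Your proposal instead tries to build a strict simplicial homotopy $\Diag\ner\ner A\times\Delta^1\to \Diag\ner\ner C$, and this cannot work: the double nerve has no cell in which the lax naturality constraints $\theta_f\colon Gf\circ\theta_a\Rightarrow \theta_{a'}\circ Ff$ can be stored. Concretely, write the homotopy in terms of the standard operators $h_i\colon(\Diag\ner\ner A)_n\to(\Diag\ner\ner C)_{n+1}$ and take $n=1$ and a single $1$-cell $f\colon a_0\to a_1$. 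The end conditions together with the identities $d_2h_0=h_0d_1$ and $d_0h_1=h_0d_0$ force $h_0(f)$ to be the $2$-chain $Fa_0\xrightarrow{\theta_{a_0}}Ga_0\xrightarrow{Gf}Ga_1$ and $h_1(f)$ to be $Fa_0\xrightarrow{Ff}Fa_1\xrightarrow{\theta_{a_1}}Ga_1$ (padded with identity $2$-cells in the vertical direction), and the remaining identity $d_1h_0=d_1h_1$ then demands the \emph{equality of $1$-cells} $Gf\circ\theta_{a_0}=\theta_{a_1}\circ Ff$, because the inner horizontal face of $\ner\ner C$ composes $1$-cells strictly. That equality holds only for $2$-natural transformations. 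The cell $\theta_f$ cannot be ``vertically composed into the existing stack'' as you suggest: the columns of a $(p,q)$-simplex of $\ner\ner C$ consist of $2$-cells between \emph{parallel} $1$-cells with the same source and target objects, whereas $\theta_f$ compares two composites that cross the splice --- it is the filler of a square, and the double nerve records no such fillers. So the verification you defer as bookkeeping fails at the very first nontrivial face relation.

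Your closing alternative via the Gray tensor product is the right kind of idea but, as written, merely renames the problem: the claim that $\tb A\otimes\mathbf{2}\tb\simeq\tb A\tb\times I$ compatibly with the two end inclusions is essentially equivalent to the statement being proved, and you leave it unjustified. The argument actually used in ~\cite{Cegarra-2011} (building on ~\cite{B-C2003}) passes to a nerve that \emph{is} functorial for lax data --- the geometric nerve, whose simplices are normal lax functors $[n]\to C$, so that a $2$-simplex is a triangle with a $2$-cell filler, which is exactly where $\theta_f$ lives --- produces the simplicial homotopy there, and then invokes the natural homotopy equivalence between the realization of the geometric nerve and $\tb C\tb$. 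Any correct proof must introduce such a lax-functorial model (or an equivalent comparison, e.g.\ via $\overline{W}$); the homotopy cannot be constructed on $\Diag\ner\ner$ directly.
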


To conclude this preliminary section, we shall recall that the {\em
classifying space $\thb S\thb$ of a simplicial $2$-category}
$S:\Delta^{\!{\mathrm{op}}}\to \mathbf{2}\Cat$, is the geometric
realization of the simplicial space obtained by composing $S$ with
the classifying space functor $\tb\,\text{-}\,\tb:\mathbf{2}\Cat\to
\mathbf{Top}$. Therefore,
$$
\thb S\thb =|[p]\mapsto \tb S_p\tb|=|[p]\mapsto |\Diag\ner\ner S_p| |=|\Diag \ner\ner S|,
$$
where $\Diag \ner\ner S$ is the simplicial set, $[p]\mapsto
\ner_p\ner_pS_p$, diagonal of the trisimplicial set $\ner\ner S$.

\section{The homotopy colimit construction on 2-functors}
We shall start by fixing some notations. Throughout the paper, the
2-category of (small) 2-categories, 2-functors, and 2-natural
transformations is denoted by $\underline{\mathbf{2}\Cat}$ (whereas
$\mathbf{2Cat}$, recall, denotes its underlying category of
2-categories and 2-functors). We view any category as a 2-category
in which all its 2-cells are identities, and thus
 $\underline{\Cat} \subseteq \underline{\mathbf{2}\Cat}$ is the 2-subcategory consisting of
 categories, functors, and natural transformations.

Further, if $C$ is a 2-category, the effect on cells of any
2-functor $\D:C \to \underline{\mathbf{2}\Cat}$ is denoted by
\begin{equation}\label{efcells}\xymatrix@C=0.5pc{c \ar@/^0.7pc/[rr]^{ f} \ar@/_0.7pc/[rr]_{
g}\ar@{}[rr]|{\Downarrow\alpha} &
 &c' }\mapsto \xymatrix@C=0.6pc{\D_{\!c}  \ar@/^0.8pc/[rr]^{ f_*}
\ar@/_0.8pc/[rr]_{ g_*}\ar@{}[rr]|(.55){\Downarrow\alpha_*} &
&\D_{\!c'}}.
\end{equation}
or, if $\D:C^{\mathrm{op}} \to \underline{\mathbf{2}\Cat}$ is
contravariant, by
$$\xymatrix@C=0.5pc{c \ar@/^0.7pc/[rr]^{ f} \ar@/_0.7pc/[rr]_{
g}\ar@{}[rr]|{\Downarrow\alpha} &
 &c' }\mapsto \xymatrix@C=0.6pc{\D_{\!c'}  \ar@/^0.8pc/[rr]^{ f^*}
\ar@/_0.8pc/[rr]_{ g^*}\ar@{}[rr]|(.55){\Downarrow\alpha^*} &
&\D_{\!c}}.
$$

The  notion of classifying space for simplicial 2-categories
naturally leads to the notion of {\em classifying space for
$2$-diagrams of $2$-categories}, that is, for 2-functors from a
2-category to $\underline{\mathbf{2}\Cat}$, which is given through
the {\em homotopy colimit} or {\em Borel} construction
 as shown below.

\begin{definition} Let $C$ be a $2$-category and $\D:C\to
\underline{\mathbf{2}\Cat}$ be a $2$-functor. The homotopy colimit
of $\D$ is the  simplicial $2$-category {\em
\begin{equation}\label{hocol}\mbox{hocolim}_C\D:\Delta^{\!{\mathrm{op}}}\to
\mathbf{2}\Cat,
\end{equation}
} whose $2$-category of $p$-simplices is
$$
 \coprod_{c_0,\ldots,c_p}\hspace{-0.1cm} \D_{\!c_0}\!\times
C(c_0,c_1)\times C(c_1,c_2)\times\cdots\times C(c_{p-1},c_p),
$$
and whose face and degeneracy $2$-functors are defined as follows:
The face $2$-functor $d_0$ is induced by the $2$-functor
$$
d_0: \D_{\!c_0}\times C(c_0,c_1)\to \D_{\!c_1},
$$
which carries an object $(x,c_0\overset{f}\to c_1)$ to the object
$f_*x$. A $1$-cell $(u,\alpha):(x,f)\to (y,g)$ is carried by $d_0$
to the composite $1$-cell $g_*u\circ \alpha_*x:f_*x\to g_*y$, $$
\xymatrix{f_*x\ar[r]^{\alpha_*x}&g_*x\ar[r]^{g_*u}&g_*y},
$$
 and $d_0$ acts on $2$-cells by
$$
\xymatrix@C=30pt{(x,f) \ar@/^0.8pc/[r]^{(u,\alpha)}
\ar@/_0.8pc/[r]_{(v,\alpha)}\ar@{}[r]|{\Downarrow\!(\phi,1_\alpha)}
&(y,g)}\ \mapsto \xymatrix@C=50pt{f_*x \ar@/^0.8pc/[r]^{g_*u\circ
\alpha_*x} \ar@/_0.8pc/[r]_{g_*v\circ
\alpha_*x}\ar@{}[r]|{\Downarrow \,g_*\!\phi\circ 1_{\!\alpha_*\!x}}
&g_*y.}
$$
The other face and degeneracy $2$-functors  are induced by
the operators $d_i$ and $s_i$ in $\ner C$ as $1_{\!\D_{\!c_0}}\!\times d_i$ and $1_{\!\D_{\!c_0}}\!\times s_i$,
respectively.

The classifying space of the $2$-functor $\D$ is {\em $\thb
\text{hocolim}_C\D \thb,$ } the classifying space of its homotopy
colimit simplicial $2$-category.
\end{definition}

In the case where $C$ is a category and $\D:C\to
 \Cat$ is a functor, one gets the usual definition for a homotopy
 colimit of a diagram of (nerves of) categories by Bousfield and Kan ~\cite[Chp. XII]{Busf-Kan1972}. Also, for $C$ a 2-category
 and $\D:C\to \underline{\Cat}$ a 2-functor, one recovers the homotopy colimit
 construction of a 2-diagram of categories by Hinich and Schechtman
 in ~\cite[Definition (2.2.2)]{Hinich1985}.

It is not hard to see that, if $\D,\E:C\to
\underline{\mathbf{2}\Cat}$ are 2-functors, then any
2-transformation $\Gamma:\D\Rightarrow \E$ gives rise to a
simplicial functor $\Gamma_*:\mathrm{hocolim}_C\D\to
\mathrm{hocolim}_C\E$; and also that a modification $m:\Gamma
\Rrightarrow \Gamma'$, where $\Gamma':\D\Rightarrow\E$ is any other
2-transformation, induces a simplicial transformation
$m_*:\Gamma_*\Rightarrow \Gamma'_*$. Thus, the homotopy colimit
construction provides a 2-functor
$$
\mathrm{hocolim}_C-:\underline{\mathbf{2}\Cat}^C \to
\underline{\mathbf{2}\Cat}^{\Delta^{\mathrm{op}}}.
$$

Similarly, if $C$ is a $2$-category and $\D:C^{\mathrm{op}}\to
\underline{\mathbf{2}\Cat}$ is any $2$-functor, we call the {\em
homotopy colimit of $\D$} the simplicial $2$-category\footnote{Note
that $\mathrm{hocolim}_C\D\neq \mathrm{hocolim}_{C^{\mathrm{op}}}\D$.}
\begin{equation}\label{hocolop}\mathrm{hocolim}_C\D:\Delta^{\!{\mathrm{op}}}\to
\mathbf{2}\Cat
\end{equation}
 whose $2$-category of $p$-simplices is
$$
 \coprod_{c_0,\ldots,c_p}\hspace{-0.1cm}
C(c_0,c_1)\times C(c_1,c_2)\times\cdots\times C(c_{p-1},c_p)\times\D_{\!c_p}
$$
and whose faces and degeneracies are  induced by the corresponding ones
in $\ner C$, as $d_i\times 1_{\!\D_{\!c_p}}$ and $s_i\times
1_{\!\D_{\!c_p}}$, for $0\leq i<p$,  whereas the face $2$-functor
$d_p$ is induced by the $2$-functor
$$
d_p: C(c_{p-1},c_p)\times \D_{\!c_p}\to \D_{\!c_{p-1}},
$$
which acts on cells by
$$
\xymatrix@C=30pt{(x,f) \ar@/^0.8pc/[r]^{(u,\alpha)}
\ar@/_0.8pc/[r]_{(v,\alpha)}\ar@{}[r]|{\Downarrow\!(\phi,1_\alpha)}
&(y,g)}\ \mapsto \xymatrix@C=50pt{f^*x \ar@/^0.8pc/[r]^{g^*u\circ
\alpha^*x} \ar@/_0.8pc/[r]_{g^*v\circ
\alpha^*x}\ar@{}[r]|{\Downarrow \,g^*\!\phi\circ 1_{\!\alpha^*\!x}}
&g^*y.}
$$
Thus, the construction $\D\mapsto \mathrm{hocolim}_C\D$ is the
function on objects of a 2-functor
$$
\mathrm{hocolim}_C-:\underline{\mathbf{2}\Cat}^{C^{\mathrm{op}}} \to
\underline{\mathbf{2}\Cat}^{\Delta^{\mathrm{op}}}.
$$

 The first two basic properties below quickly follow from the
 definition.

 \begin{proposition} Let $D:C\to
 \underline{\mathbf{2}\Cat}$ denote the constant $2$-functor on a $2$-category $C$ given by
 a $2$-category $D$. There is a natural homeomorphism
{\em $$\thb \mathrm{hocolim}_C\,D \thb \cong \tb D\tb \times \tb
C\tb.$$ }

In particular, for $D=\mathrm{pt}$ the terminal
$2$-category, {\em
$$\thb \mathrm{hocolim}_C\mathrm{ pt} \thb \cong
\tb C\tb,$$ } and for $C=\mathrm{ pt}$,
{\em
$$\thb \mathrm{hocolim}_\mathrm{ pt}\,D \thb \cong
\tb D\tb.$$ }
 \end{proposition}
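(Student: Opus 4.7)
The plan is to identify the simplicial $2$-category $\mathrm{hocolim}_C D$ with the levelwise product $D \times \ner C$, where $D$ is regarded as a constant simplicial $2$-category and each $\ner_p C$ as a $2$-category with only identity $2$-cells; the statement will then follow from the compatibility of both the classifying space of $2$-categories and the geometric realization of simplicial spaces with finite products in compactly generated Hausdorff spaces.

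First, I would unwind the definition of $\mathrm{hocolim}_C$ on a constant $2$-functor. Because $f_* = 1_D$ for every $1$-cell $f$ of $C$ and $\alpha_* = 1_{1_D}$ for every $2$-cell $\alpha$, the defining $2$-functor $d_0 \colon D \times C(c_0, c_1) \to D$ of \eqref{hocol} reduces on cells to the projection onto the first factor; together with the clauses defining the remaining $d_i$ and $s_i$, this shows that every face and degeneracy of $\mathrm{hocolim}_C D$ has the form $1_D \times d_i^{\ner C}$ or $1_D \times s_i^{\ner C}$. Hence one obtains a natural isomorphism of simplicial $2$-categories
$$\mathrm{hocolim}_C D \;\cong\; D \times \ner C.$$

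Next, I would apply $\tb -\tb$ levelwise. The double nerve clearly sends products of $2$-categories to products of bisimplicial sets, and Milnor's theorem asserts that the geometric realization of (bi)simplicial sets preserves finite products in compactly generated Hausdorff spaces; together these yield a natural homeomorphism $\tb A \times B\tb \cong \tb A\tb \times \tb B\tb$ for any two $2$-categories. Applied at each simplicial degree, and using $\tb \ner_p C\tb = |\ner_p C|$ since $\ner_p C$ is an ordinary category, this gives $\tb D \times \ner_p C \tb \cong \tb D\tb \times |\ner_p C|$.

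Finally, these homeomorphisms assemble into an isomorphism of simplicial spaces, and invoking the analogous commutation of the geometric realization of simplicial spaces with finite products (applied to the constant simplicial space $\tb D\tb$ and to $[p]\mapsto |\ner_p C|$) yields
$$\thb \mathrm{hocolim}_C D\thb \;=\; \bigl|[p] \mapsto \tb D\tb \times |\ner_p C|\bigr| \;\cong\; \tb D\tb \times \bigl|[p] \mapsto |\ner_p C|\bigr| \;=\; \tb D\tb \times \tb C\tb,$$
the last equality being the definition of $\tb C\tb$ recalled in Section~\ref{1.1}. Naturality in $D$ and $C$ is manifest, and the two particular cases follow by specialising $D$ or $C$ to $\mathrm{pt}$. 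There is no real obstacle in the argument; the only mildly non-formal inputs are the two appeals to Milnor-type preservation of products by geometric realization, and working in compactly generated spaces from the outset sidesteps any issue there.
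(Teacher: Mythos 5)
Your proposal is correct and follows essentially the same route as the paper's own one-line proof: identify the $p$-simplices of $\mathrm{hocolim}_C D$ with $D\times \ner_pC$, pull the constant factor $\tb D\tb$ out of the levelwise classifying spaces, and then out of the geometric realization of the resulting simplicial space. You merely make explicit the product-preservation facts that the paper leaves implicit.
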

\begin{proof} We have
 $$\thb \mathrm{hocolim}_C D\thb =| [p]\mapsto \tb D\times \ner_p C\tb|\cong
  | [p]\mapsto \tb D\tb \times |\ner_p C||\cong\tb D\tb\times |[p]\mapsto |\ner_p C||=\tb D\tb\times\tb C\tb.$$
\end{proof}

\begin{theorem}[Homotopy Invariance Theorem] \label{ith} Let $\D,\E:C\to \underline{\mathbf{2}\Cat}$ (or $\D,\E:C^{\mathrm{op}}\to \underline{\mathbf{2}\Cat}$) be
$2$-functors, where  $C$ is any $2$-category. If
$\Gamma:\D\Rightarrow \E$ is a $2$-transformation such that, for
each object $c$ of $C$, the $2$-functor $\Gamma_c:\D_c\to \E_c$ is a
weak equivalence of $2$-categories, then the induced map on
classifying spaces
$$
\Gamma_*:\thb \mathrm{hocolim}_C\D\thb \to \thb \mathrm{hocolim}_C\E\thb
$$
is a homotopy equivalence.
\end{theorem}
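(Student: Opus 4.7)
The plan is to reduce the theorem to a levelwise weak-equivalence statement about the simplicial 2-categories $\mathrm{hocolim}_C\D$ and $\mathrm{hocolim}_C\E$, and then pass to classifying spaces by a standard bisimplicial diagonal argument. The 2-transformation $\Gamma$ evidently induces a morphism of simplicial 2-categories $\Gamma_*\colon \mathrm{hocolim}_C\D\to \mathrm{hocolim}_C\E$, so it is enough to show that each component $(\Gamma_*)_p$ is a weak equivalence of 2-categories and that this levelwise property is preserved when one applies $\thb\,\text{-}\,\thb$.

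For the levelwise step, inspection of the construction shows that on the summand of the coproduct decomposition of $(\Gamma_*)_p$ indexed by $(c_0,\ldots,c_p)$, the 2-functor acts as
$$\Gamma_{c_0}\times 1_{C(c_0,c_1)}\times\cdots\times 1_{C(c_{p-1},c_p)}.$$
The key ingredient here is that the 2-category classifying space functor preserves finite products: starting from $\ner_p(A\times B)=\ner_p A\times \ner_p B$ as categories, and since both the diagonal of a bisimplicial set and Milnor realization commute with finite products of simplicial sets in compactly generated spaces, one obtains a natural homeomorphism $\tb A\times B\tb\cong\tb A\tb\times\tb B\tb$. Consequently $\Gamma_{c_0}\times 1$ is again a weak equivalence because $\Gamma_{c_0}$ is, and as classifying space also respects disjoint unions, the full coproduct $(\Gamma_*)_p$ is a weak equivalence. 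The contravariant case is identical with $\Gamma_{c_p}$ in place of $\Gamma_{c_0}$.

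For the passage to classifying spaces, use the identification $\thb S\thb=|\Diag\ner\ner S|$ recalled in Section~2, so that the map to be analyzed is the Milnor realization of the total-diagonal map of the trisimplicial sets $\ner\ner\Gamma_*$. Grouping the last two simplicial coordinates, one views this as a map of bisimplicial sets whose $p$-th slice is $\Diag\ner\ner((\Gamma_*)_p)$, already known to be a weak equivalence by the previous paragraph. The standard diagonal lemma for bisimplicial sets -- a morphism that is a levelwise weak equivalence on simplicial sets in one direction becomes a weak equivalence on diagonals -- yields a weak equivalence of simplicial sets on total diagonals, hence a homotopy equivalence after realization. The main technical obstacle I foresee lies precisely in this last step: one must track three simplicial indices with care and check the interaction with the coproduct over $(c_0,\ldots,c_p)$; the genuine homotopical content, however, is fully absorbed into the product-preservation identity $\tb A\times B\tb\cong\tb A\tb\times\tb B\tb$ used in the levelwise step.
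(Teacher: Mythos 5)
Your proposal is correct and follows essentially the same route as the paper: the paper's proof likewise observes that $(\Gamma_*)_p$ is levelwise the weak equivalence $\Gamma_{c_0}\times 1\times\cdots\times 1$ on each summand and then concludes by realizing the levelwise equivalence of (simplicial) classifying spaces. You merely supply more of the supporting detail (product- and coproduct-preservation of $\tb\,\text{-}\,\tb$ and the diagonal lemma for bisimplicial sets) that the paper leaves implicit.
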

\begin{proof} Since $F$ is objectwise a weak equivalence, for any integer $p\geq 0$ the induced 2-functor
$$
\coprod_{c_0,\ldots,c_p}\hspace{-0.1cm} \D_{\!c_0}\!\times
C(c_0,c_1)\times \cdots\times C(c_{p-1},c_p) \longrightarrow
\coprod_{c_0,\ldots,c_p}\hspace{-0.1cm} \E_{c_0}\!\times
C(c_0,c_1)\times\cdots\times C(c_{p-1},c_p)
$$
is a weak equivalence. Then,

$ \thb \mathrm{hocolim}_C\D\thb=|[p]\mapsto \tb
\mathrm{hocolim}_C\D_p\tb| \simeq |[p]\mapsto \tb
\mathrm{hocolim}_C\E_p\tb|= \thb \mathrm{hocolim}_C\E\thb. $
\end{proof}

\begin{corollary}\label{cor1} Let $C$ be a $2$-category and  $\D:C\to
\underline{\mathbf{2}\Cat}$ (or $\D:C^{\mathrm{op}}\to
\underline{\mathbf{2}\Cat}$) a $2$-functor such that, for any object
$c$ of $C$, the $2$-category $\D_{\!c}$ is weakly contractible.
Then, the induced map by the collapse $2$-transformation
$\D\Rightarrow \mathrm{ pt}$,
$$\thb \mathrm{hocolim}_C\D\thb \to \thb
\mathrm{hocolim}_C\,\mathrm{pt}\thb\cong\tb C\tb,$$ is a homotopy
equivalence.
\end{corollary}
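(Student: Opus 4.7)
The plan is to apply the Homotopy Invariance Theorem (Theorem \ref{ith}) directly to the collapse 2-transformation $\D \Rightarrow \mathrm{pt}$. The key observation is that this 2-transformation has, at each object $c \in \Ob C$, the unique 2-functor $\D_c \to \mathrm{pt}$ as its component. By the very definition of weak contractibility given earlier in Section 2, to say that $\D_c$ is weakly contractible is precisely to say that $\D_c \to \mathrm{pt}$ is a weak equivalence of 2-categories. Hence the collapse 2-transformation satisfies, objectwise, the hypothesis of Theorem \ref{ith}.

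Applying Theorem \ref{ith} then yields that the induced map
$$\thb \mathrm{hocolim}_C\D \thb \longrightarrow \thb \mathrm{hocolim}_C\,\mathrm{pt} \thb$$
is a homotopy equivalence. To conclude, I would invoke the preceding Proposition with $D = \mathrm{pt}$, which gives the natural homeomorphism $\thb \mathrm{hocolim}_C\,\mathrm{pt} \thb \cong \tb C\tb$. Composing this homeomorphism with the homotopy equivalence above gives the desired statement.

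The contravariant case, $\D:C^{\mathrm{op}}\to \underline{\mathbf{2}\Cat}$, is handled by the same argument, since Theorem \ref{ith} is stated for both variances and the preceding Proposition also applies to constant 2-functors regardless of variance. There is no real obstacle here: the content of the corollary is entirely concentrated in Theorem \ref{ith}, and once that theorem is available, this is essentially a matter of unpacking the definition of weak contractibility and identifying the resulting target via the Proposition.
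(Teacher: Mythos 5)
Your proof is correct and is exactly the argument the paper intends: the corollary is stated without a separate proof precisely because it follows by applying Theorem \ref{ith} to the collapse $2$-transformation (whose components are weak equivalences by the definition of weak contractibility) and then identifying $\thb \mathrm{hocolim}_C\,\mathrm{pt}\thb \cong \tb C\tb$ via the preceding Proposition. Nothing is missing.
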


\section{The Grothendieck construction on 2-functors}

Since Thomason established his Homotopy Colimit Theorem, the
so-called ``Grothendieck construction" on diagrams of small
categories has become an essential tool in the homotopy theory of
classifying spaces. This construction underlies the 2-categorical
construction we treat here for 2-diagrams of 2-categories, which was
recently used by Cegarra in ~\cite[Theorem 4.5
$(i)$]{Cegarra-2011} to generalize Thomason's theorem for 2-diagrams
of categories, and also by Buckley in ~\cite[Theorem
2.2.11]{Buckley2014} to classify split (co)fibred 2-categories. For
a more general version of the enriched Grothendieck construction
below, which works even on lax bidiagrams of Benabou's bicategories,
we refer the reader to ~\cite{Bakovic2011}, ~\cite{Buckley2014},
~\cite{C-C-G2011} or ~\cite{CCH2014}.

Let $C$ be a 2-category, and let $\D:C \to
\underline{\mathbf{2}\Cat}$ be a 2-functor, whose  effect on cells
of $C$ is denoted as in \eqref{efcells}. The {\em Grothendieck
construction} on the 2-diagram $\D$ assembles the 2-diagram into a
2-category, denoted by
$$\xymatrix{\int_{C}\D},$$
whose objects are pairs $(a,x)$ with $a\in\Ob C$ and $x\in
\Ob\D_{\!a}$, the $1$-cells are pairs $(f,u):(a,x)\to (b,y)$, where
$f:a\to b$ is a 1-cell in $C$ and $u:f_*x\to y$ is a 1-cell in
$\D_b$, and the $2$-cells
$$\xymatrix@C=25pt{(a,x)\ar@{}[r]|{\Downarrow
(\alpha,\phi)}\ar@/^0.8pc/[r]^{(f,u)}\ar@/_0.8pc/[r]_{(g,v)}&(b,y),}
$$ are pairs  consisting of a 2-cell $\xymatrix@C=0.5pc{a
\ar@/^0.6pc/[rr]^{ f} \ar@/_0.6pc/[rr]_{
g}\ar@{}[rr]|{\Downarrow\alpha} &
 &b }$ of $C$ together with a 2-cell $\phi:u\Rightarrow v\circ \alpha_*x$ in
 $\D_b$,
 $$\xymatrix@C=0.8pc@R=-2pt{f_*x \ar@/^1pc/[rr]^{ u}
\ar@/_0.5pc/[rd]_(.4){ \alpha_*x}\ar@{}[rr]|{\Downarrow\phi} &
 &y.\\ &g_*x\ar@/_0.5pc/[ru]_{v}&}
$$

The vertical composition  of 2-cells
$$\xymatrix@C=16pt{(a,x) \ar[rr]_{
\Downarrow(\beta,\psi)}  \ar@/_1.4pc/[rr]_{
(h,w)} \ar@/^1.4pc/[rr]^{(f,u)}\ar@{}[rr]^{
\Downarrow(\alpha,\phi)} & &(b,y) }$$
is the 2-cell
$$ (\beta,\psi)\cdot (\alpha,\phi)=(\beta\cdot \alpha,
(\psi\circ 1_{\alpha_*x})\cdot \phi):(f,u)\Rightarrow
(h,w),$$ and the identity 2-cell of a 1-cell $(f,u)$ as above is $1_{(f,u)}=(1_f,1_u)$.

The horizontal composition  of two 1-cells
$\xymatrix@C=20pt{(a,x)\ar[r]^{(f,u)}&(b,y)\ar[r]^{(f',u')}&(c,z)}$
is the 1-cell
$$
(f',u')\circ (f,u)=(f'\circ f, u'\circ f'_*u):(a,x)\longrightarrow (c,z),
$$
the identity 1-cell of an object $(a,x)$ is $1_{(a,x)}=(1_a,1_x)$,
and the horizontal composition of 2-cells
$$
\xymatrix@C=30pt{(a,x)\ar@{}[r]|{\Downarrow (\alpha,\phi)}\ar@/^1pc/[r]^{(f,u)}
\ar@/_1pc/[r]_{(g,v)}&(b,y)
\ar@{}[r]|{\Downarrow (\alpha',\phi')}\ar@/^1pc/[r]^{(f',u')}\ar@/_1pc/[r]_{(g',v')}&(c,z)}
$$
is the 2-cell
$$
(\alpha',\phi')\circ (\alpha,\phi)=(\alpha'\circ \alpha, \phi'\circ f'_*\phi):(f'\circ f, u'\circ f'_*u)
\Rightarrow (g'\circ g, v'\circ g'_*v).
$$

Note that a 2-transformation $\Gamma:\D\Rightarrow\E$ between
2-functors $\D,\E:C\to \underline{\mathbf{2}\Cat}$ induces the
2-functor $\int_C\Gamma: \int_C\D\to \int_C\E$ such that
$$\begin{array}{ccc}
\xymatrix@C=3pc{
(a,x)\ar@/^1pc/[r]^{(f,u)} \ar@/_1pc/[r]_{(g,v)}
  \ar@{}[r]|{\Downarrow(\alpha,\phi)} & (b,y)}
& \mapsto &\xymatrix@C=3pc{
(a,\Gamma_a   x)
  \ar@/^1pc/[r]^{(f,\Gamma_bu)}
  \ar@/_1pc/[r]_{(g,\Gamma_bv)}
  \ar@{}[r]|{\Downarrow (\alpha, \Gamma_b\phi)}
& (b, \Gamma_by). }
\end{array}
$$
Also, for $\Gamma':\D\Rightarrow \E$ any other  2-transformation, a
modification $m:\Gamma \Rrightarrow \Gamma'$ gives rise to the
2-transformation $ \int_C m:\int_C \Gamma \Rightarrow \int_C \Gamma'
$ given by
 $$\xymatrix{ \int_C m(a,x)=(1_a, m_a x):(a,\Gamma_ax)\to  (a,\Gamma'_a x). }$$

Thus, the 2-categorical Grothendieck construction provides a
2-functor
$$\xymatrix@C=20pt{\int_C-\,:\, \underline{\mathbf{2}\Cat}^{C}\ar[r]&
\underline{\mathbf{2}\Cat}.}$$

In a similar way, if $\D:C^{\mathrm{op}} \to \underline{\mathbf{2}\Cat}$ is a
2-functor, the  {\em Grothendieck construction on $\D$} is the
2-category, denoted by $\xymatrix{\int_{C}\D}$,  whose objects are
pairs $(a,x)$ with $a\in\Ob C$ and $x\in \Ob\D_{\!a}$, whose
$1$-cells  $(f,u):(a,x)\to (b,y)$ are pairs where $f:a\to b$ is a
1-cell in $C$ and $u:x\to f^*y$ is a 1-cell in $\D_a$, and whose
$2$-cells $(\alpha,\phi):(u,f)\Rightarrow(v,g)$ are pairs consisting
of a 2-cell $\alpha:u\Rightarrow v$ of $C$
 together with a 2-cell $\phi:\alpha^*y\circ u\Rightarrow v$ in
 $\D_b$,
 $$\xymatrix@C=1pc@R=-2pt{&f^*y\ar@/^0.5pc/[rd]^{\alpha^*y}& \\
 x \ar@/^0.5pc/[ru]^{ u}
\ar@/_1pc/[rr]_(.5){v}\ar@{}[rr]|{\Downarrow\phi} &
 &g^*y.}
$$

The vertical composition  of the 2-cell $(\alpha,\phi)$ as above
with a 2-cell $(\beta,\psi):(v,g)\Rightarrow  (w,h)$ is the 2-cell
$$ (\beta,\psi)\cdot (\alpha,\phi)=(\beta\cdot \alpha,
\psi\cdot (1_{\beta^*y}\circ \phi)):(f,u)\Rightarrow
(h,w).$$
The horizontal composition  of two 1-cells
$\xymatrix@C=20pt{(a,x)\ar[r]^{(f,u)}&(b,y)\ar[r]^{(f',u')}&(c,z)}$
is the 1-cell
$$
(f',u')\circ (f,u)=(f'\circ f,  f^*u'\circ u):(a,x)\longrightarrow (c,z),
$$
and the horizontal composition of 2-cells
$$
\xymatrix@C=30pt{(a,x)\ar@{}[r]|{\Downarrow (\alpha,\phi)}\ar@/^1pc/[r]^{(f,u)}
\ar@/_1pc/[r]_{(g,v)}&(b,y)
\ar@{}[r]|{\Downarrow (\alpha',\phi')}\ar@/^1pc/[r]^{(f',u')}\ar@/_1pc/[r]_{(g',v')}&(c,z)}
$$
is the 2-cell
$$
(\alpha',\phi')\circ (\alpha,\phi)=(\alpha'\circ \alpha, f^*\phi'\circ \phi):
(f'\circ f, f^*u'\circ u)
\Rightarrow (g'\circ g, g^*v'\circ v).
$$

Thus, similarly to the covariant case,  $\D\mapsto \int_C\D$ is the function on objects of a 2-functor
$$\xymatrix@C=20pt{\int_C-\,:\, \underline{\mathbf{2}\Cat}^{C^{\mathrm{op}}}\ar[r]&
\underline{\mathbf{2}\Cat}.}$$

\section{The Homotopy Colimit Theorem for 2-functors}

This section is fully dedicated to proving Theorem \ref{hct} below,
which has Thomason's Homotopy Colimit Theorem ~\cite[Theorem
1.2]{Thomason1980} as its natural precedent and also includes the
results in ~\cite[Theorem 4.5]{Cegarra-2011} as particular cases.

In the proof we give of this 2-categorical Homotopy Colimit Theorem,
we use the $\overline{W}$-construction on a bisimplicial set
by Artin and Mazur ~\cite[\S III]{Artin-Mazur1966}, also called its
``codiagonal" or ``total complex".  Recall that, by viewing a
bisimplicial set $S:\Delta^{\!{\mathrm{op}}}\times
\Delta^{\!{\mathrm{op}}}\to \Set$ as a horizontal simplicial object
in the category of vertical simplicial sets, then the set of
$n$-simplices of $\overline{W}S$ is
$$
\Big\{(t_{n,0}, \dots,t_{0,n})\in
\prod_{p+q=n}\hspace{-4pt}S_{p,q}~~|~d^h_0t_{p,q}=d^v_{q+1}t_{p-1,q+1}
\text{ for } \, n\geq p\geq 1 \Big\}$$ and, for $0\leq i\leq n$, the
faces and degeneracies of an $n$-simplex are given by
$$
\begin{array}{lll}d_i(t_{n,0},
\dots,t_{0,n})&=&(d_i^ht_{n,0},\dots,d_1^ht_{n-i+1,i-1},d_i^vt_{n-i-1,i+1},\dots,d_i^vt_{0,n}),
\\[-4pt]~\\
s_i(t_{n,0}, \dots,t_{0,n})&=&(s_i^ht_{n,0},\dots,s_0^ht_{n-i,i},s_i^vt_{n-i,i},\dots,s_i^vt_{0,n}).
\end{array}
$$
There is a natural Alexander-Whitney-type diagonal approximation
$\Diag\,S\to \overline{W}S$,
$$
S_{n,n}\ni t\mapsto \big((d_1^v)^nt,(d_2^v)^{n-1}d_0^ht,\dots, (d_{p+1}^v)^{n-p}(d_0^h)^pt,\dots,
(d_0^h)^nt\big),
$$
inducing a homotopy equivalence on geometric realizations (see
~\cite{C-R2005}, ~\cite{Stevenson2012}, or ~\cite{Zisman2014} for a
proof).

\begin{equation}\label{diwb}|\Diag\,S|\simeq |\overline{W}S|.\end{equation}

\begin{theorem}\label{hct}  For any $2$-functor
$\D: C\to \underline{\mathbf{2}\Cat}$ (or $\D: C^{\mathrm{op}}\to
\underline{\mathbf{2}\Cat}$), where $C$ is a $2$-category, there
exists a natural homotopy equivalence
$$\xymatrix{\thb\mbox{\em hocolim}_C\D\thb \simeq \tb
\int_C\D\tb.}$$
\end{theorem}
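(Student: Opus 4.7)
The strategy is to realize both sides as geometric realizations of simplicial sets built from iterated nerves of the $2$-categories involved, and then to compare them simplex-by-simplex via the Artin--Mazur codiagonal $\W$. I treat the covariant case; the contravariant case is parallel.

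\emph{Step 1 (unpack both sides).} Because the double nerve $\ner\ner$ preserves products of $2$-categories, and because each hom-category $C(c,c')$ is a $1$-category (so its double nerve is constant in the second simplicial direction), the trisimplicial set $T:=\ner\ner\,\mathrm{hocolim}_C\D$ has
$$
T_{p,q,r}=\coprod_{c_0,\ldots,c_p}\ner_r\ner_q\D_{c_0}\;\times\;\prod_{i=1}^{p}\ner_q C(c_{i-1},c_i),
$$
and $\thb\mathrm{hocolim}_C\D\thb=|\Diag T|$. On the other side, $\tb\int_C\D\tb=|\Diag\ner\ner\int_C\D|$, and an element of $\ner_q\ner_p\int_C\D$ is a length-$p$ chain of composable $1$-cells in $\int_C\D$ together with a $q$-chain of vertical $2$-cells; unpacking the Grothendieck construction, this is a length-$p$ chain in $C$ together with compatible push-forward $1$- and $2$-cells in the fibers $\D_{c_i}$.

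\emph{Step 2 (codiagonal bridge).} Using \eqref{diwb} to replace diagonals by $\W$ in each bisimplicial direction, I would rewrite $\thb\mathrm{hocolim}_C\D\thb$ as the realization of an iterated $\W$-codiagonal of $T$. An $n$-simplex of this codiagonal encodes, on the one hand, an $n$-simplex of $\ner C$ (i.e.\ a compatible array of $1$- and $2$-cells in $C$) and, on the other, a triangular ``Alexander--Whitney'' array of cells of $\D_{c_0}$ of decreasing length. I would then exhibit an explicit natural simplicial map from this codiagonal into $\Diag\ner\ner\int_C\D$ that pushes the $\D_{c_0}$-data forward along the chain $c_0\to\cdots\to c_n$ to produce the required $1$- and $2$-cells of $\int_C\D$. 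The point is that the combinatorial shape of an $n$-simplex of $\W$ matches exactly the triangular shape of iterated push-forwards appearing in the nerve of the Grothendieck construction, which is what makes the map both well-defined and simplicial.

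\emph{Step 3 (fiberwise equivalence).} To prove the comparison is a weak equivalence, I would argue fiberwise over $\ner C$. For a fixed chain $c_0\to\cdots\to c_n$ in $C$, the fiber on the left reduces to the double nerve of $\D_{c_0}$; the fiber on the right is the nerve of a slice-type $2$-category whose objects parametrize chains in $\int_C\D$ lying over the given $C$-chain. This slice-type $2$-category admits an initial-object deformation onto $\D_{c_0}$ (equivalently, the space of choices of push-forward data is the classifying space of a weakly contractible $2$-category, by Corollary \ref{cor1}). A bisimplicial Bousfield--Kan realization argument, combined with Theorem \ref{ith}, then promotes this pointwise equivalence to a global homotopy equivalence of realizations.

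\emph{Main obstacle.} The hard part is the combinatorial bookkeeping of Steps 1--2: writing the comparison map explicitly, matching the $\W$-face operators against the face operators of $\ner\ner\int_C\D$, and tracking carefully the $\alpha_*$-actions on $2$-cells that appear in both the definition of $\mathrm{hocolim}_C\D$ and that of the Grothendieck construction. Once the map has been constructed and verified to be simplicial, the homotopical content reduces to the fiberwise contractibility argument, which follows from tools already developed in Sections 3 and 4.
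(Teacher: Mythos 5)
Your Steps 1--2 start down the same road as the paper (replace diagonals by the Artin--Mazur codiagonal $\W$ and compare simplices), but the central claim of Step 2 --- that ``the combinatorial shape of an $n$-simplex of $\W$ matches exactly the triangular shape of iterated push-forwards appearing in the nerve of the Grothendieck construction'' --- is false as stated, and this is exactly where the real work lies. If you compute the $n$-simplices of $\W\big([p]\mapsto\W\ner\ner(\mathrm{hocolim}_C\D)_p\big)$ you obtain, over each $n$-simplex $(c,f,\alpha)_n$ of $\W\ner\ner C$, fibre data consisting of $1$-cells $u^k_m\colon f^{m-1}_{m*}x_{m-1}\to x_m$ \emph{all with the same source} and $2$-cells $\phi^k_m\colon u^{k-1}_m\Rightarrow u^k_m$; whereas an $n$-simplex of $\W\ner\ner\int_C\D$ has $1$-cells $u^k_m\colon f^{k}_{m*}x_{m-1}\to x_m$ with \emph{varying} sources and $2$-cells $\phi^k_m\colon u^{k-1}_m\Rightarrow u^k_m\circ\alpha^k_{m*}x_{m-1}$. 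These are genuinely different simplicial sets, with no evident natural map between them. The paper bridges the gap by interposing an auxiliary trisimplicial set $E$ and applying the equivalence \eqref{diwb} a second time in a \emph{different pair of simplicial directions}, after which one gets an honest simplicial isomorphism with $\W\ner\ner\int_C\D$. Note also that your proposed comparison points \emph{into} $\Diag\ner\ner\int_C\D$, while the natural Alexander--Whitney-type map goes from a diagonal to a codiagonal; you should land in $\W\ner\ner\int_C\D$ and invoke \eqref{diwb} once more.

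Step 3 also does not go through as described. The trisimplicial set $\ner\ner\int_C\D$ does not decompose over chains of $\ner C$: the morphisms of the category $\ner_p\int_C\D$ involve $2$-cells of $C$, which connect different underlying $1$-chains, so there is no well-defined ``fibre over a fixed chain $c_0\to\cdots\to c_n$'' to which a levelwise realization argument could be applied; and Corollary \ref{cor1} is a statement about homotopy colimits of objectwise weakly contractible diagrams, not the contractibility statement your ``space of choices of push-forward data'' would require. In fact, no homotopical input of this kind is needed: in the paper's argument every step is either an instance of \eqref{diwb} or an isomorphism of simplicial sets, so once the combinatorics of Step 2 is carried out correctly (via $E$ or an equivalent device) there is nothing left to prove. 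A fibrewise strategy, if you insist on one, would have to be run over the full $\W\ner\ner C$ --- triangular arrays of $1$- and $2$-cells of $C$ --- with the contractibility of the resulting fibres established directly, which is a different and substantially harder argument than the one you sketch.
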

\begin{proof}
We shall treat the covariant case, as the other is proven
similarly.

For  any 2-category $C$, we have a natural homotopy equivalence
$$\tb C\tb=|\Diag \ner\ner C|\simeq |\W \ner\ner C|.$$
To describe this simplicial set $\W\ner\ner C$, let us first
represent a $(p,q)$-simplex of the bisimplicial set $\ner\ner C$ as
a diagram $(c,f,\alpha)_{p,q}$ in $C$ of the form
\begin{equation}\label{pqnn}
\xymatrix@C=10pt {(c,f,\alpha)_{p,q}:&c_q
\ar@/^1pc/[rrr]
\ar@{}[rrr]|{\vdots}
 \ar@/_1pc/[rrr]
\ar@/^2.5pc/[rrr]^{f_{q+1}^0}_(.5)*+{\Downarrow\scriptstyle{\alpha_{q+1}^1}}
\ar@/_2.5pc/[rrr]_-{f^q_{q+1}}^(.5)*+{\Downarrow\scriptstyle{\alpha^q_{q+1}}}
&&& c_{q+1}\ar@/^1pc/[rrr]
\ar@{}[rrr]|{\vdots}
 \ar@/_1pc/[rrr]
\ar@/^2.5pc/[rrr]^{f_{q+2}^0}_(.5)*+{\Downarrow\scriptstyle{\alpha_{q+2}^1}}
\ar@/_2.5pc/[rrr]_-{f^q_{q+2}}^(.5)*+{\Downarrow\scriptstyle{\alpha^q_{q+2}}}& &&c_{q+2}& \cdots&
 c_{q+p-1} \ar@/^1pc/[rrr]
\ar@{}[rrr]|{\vdots}
 \ar@/_1pc/[rrr]
\ar@/^2.5pc/[rrr]^{f_{q+p}^0}_(.5)*+{\Downarrow\scriptstyle{\alpha_{q+p}^1}}
\ar@/_2.5pc/[rrr]_-{f_{q+p}^{q}}^(.5)*+{\Downarrow\scriptstyle{\alpha_{q+p}^{q}}}&&& c_{q+p},}
\end{equation}
whose horizontal $i$-face is obtained by deleting the object
$c_{q+i}$ and using, for $0<i<p$, the composite cells
$f^k_{q+i+1}\circ f^k_{q+i}$ and $\alpha^k_{q+i+1}\circ
\alpha^k_{q+i}$ to rebuild the new $(p-1,q)$-simplex, and whose
vertical $j$-face is obtained by deleting all the 1-cells
$f^j_{q+m}$ and using the composite 2-cells $\alpha^{j+1}_{q+m}\cdot
\alpha^{j}_{q+m}$, for $0<j<q$, to complete the $(p,q-1)$ simplex of
$\ner\ner B$. Then,  it is straightforward to obtain the following
description of the simplicial set $\W\ner\ner C$:
 The vertices are the objects $c_0$ of $C$, the
1-simplices are the 1-cells $f_1^0:c_0\to c_1$ of $C$, and, for
$n\geq 2$, the $n$-simplices are diagrams $(c,f,\alpha)_n$ in $C$ of
the form
\begin{equation}\label{simbarw}
\xymatrix @C=10pt {(c,f,\alpha)_n:&c_0\ar[rr]^{f_1^0} && c_1 \ar@/^1pc/[rr]^{f_2^0} \ar@/_1pc/[rr]_-{f_2^1}
& {\Downarrow \scriptstyle{\alpha_2^1}} &
c_2 \ar[rrr]|{f_3^1}
\ar@/^2pc/[rrr]^{f_3^0}_(.4)*+{\Downarrow\scriptstyle{\alpha_3^1}}
\ar@/_2pc/[rrr]_-{f_3^2}^(.4)*+{\Downarrow \scriptstyle{\alpha_3^2}}& && c_3 &\cdots&c_{n-1}
\ar@/^1pc/[rrr]
\ar@{}[rrr]|{\vdots}
 \ar@/_1pc/[rrr]
\ar@/^2.5pc/[rrr]^{f_n^0}_(.5)*+{\Downarrow\scriptstyle{\alpha_n^1}}
\ar@/_2.5pc/[rrr]_-{f_n^{n-1}}^(.5)*+{\Downarrow\scriptstyle{\alpha_n^{n-1}}}&&& c_n,
}
\end{equation}
that is, they consist of objects $c_m$ of $C$, $0\leq m\leq n$,
1-cells $f_m^k:c_{m-1}\to c_{m}$, $0\leq k <m\leq n$, and 2-cells
$\alpha_m^k:f_m^{k-1}\Rightarrow f_m^{k}$, $0<k<m\leq n$. The
simplicial operators of $\W\ner\ner C$ act much as for the usual nerve
of an ordinary category: The $i$-face of an $n$-simplex as in
\eqref{simbarw} is obtained by deleting the object $c_i$ and the
$1$-cells $f_m^{i}:c_{m-1}\to c_m$, for $i< m$, and using the
composite 1-cells $f^k_{i+1}\circ f^k_i:c_{i-1}\to c_{i+1}$, $ k<i$,
the horizontally composite 2-cells $\alpha^k_{i+1}\circ
\alpha^k_i:f^{k-1}_{i+1}\circ f^{k-1}_i\Rightarrow f^k_{i+1}\circ
f^k_i$, $0<k<i$, and the vertically composed 2-cells
$\alpha^{i+1}_m\cdot \alpha^i_m:f^{i-1}_m\Rightarrow f^{i+1}_m$, when
$i<m-1$, to complete the new $(n-1)$-simplex. The $i$-degeneracy
of $(c,f,\alpha)_n$ is constructed by repeating the object $c_i$ at
the $i + 1$-place and inserting $i+1$ times the identity 1-cell
$1_{c_i}:c_i\to c_i$, $i$ times the identity 2-cell
$1_{1_{c_i}}:1_{c_i}\Rightarrow 1_{c_i}$ and, for each $i<m$, by
replacing the 1-cell $f_m^i:c_{m-1}\to c_m$ by the identity 2-cell
$1_{f_m^i}:f_m^i\Rightarrow f_m^i$.

Now, for $\D:C\to \underline{\mathbf{2}\Cat}$ any given 2-functor,
we have
\begin{align}\label{111} \thb\mbox{hocolim}_B\D\thb
&=|\Diag \ner\ner\mbox{hocolim}_C\D|= |\Diag \big([p]\mapsto \Diag
\ner\ner\mbox{hocolim}_C\D_p\big)|\\ &\simeq |\Diag \big([p]\mapsto \W\ner\ner
\mbox{hocolim}_C\D_p\big)|\simeq |\W([p]\mapsto
\W\ner\ner\text{hocolim}_C\D_p)|,\nonumber
\end{align}
and the last simplicial set $\W([p]\mapsto
\W\ner\ner\text{hocolim}_C\D_p)$ can be described as follows: Its
$n$-simplices are pairs
\begin{equation}\label{twotri}((c,f,\alpha)_n,(x,u,\phi)_n)\end{equation} where $(c,f,\alpha)_n$ is an
$n$-simplex of $\W \ner\ner C$ as in \eqref{simbarw}, whereas
$(x,u,\phi)_n$ is a list with a diagram in each 2-category
$D_{\!c_0}$,...,$D_{\!c_n}$ of the form
\begin{equation}\label{simbarww}
x_0,\xymatrix@C=14pt{f_{1*}^0x_0\ar[r]^{u_1^0}&x_1}, \xymatrix@C=1pt{f_{2*}^1x_1 \ar@/^1pc/[rr]^{u_2^0}
 \ar@/_1pc/[rr]_-{u_2^1}
& {\Downarrow \scriptstyle{\phi_2^1}} &
x_2}, \xymatrix@C=4pt{f_{3*}^2x_2 \ar[rrr]|{u_3^1}
\ar@/^1.8pc/[rrr]^{u_3^0}_(.4)*+{\Downarrow\scriptstyle{\phi_3^1}}
\ar@/_1.8pc/[rrr]_-{u_3^2}^(.4)*+{\Downarrow \scriptstyle{\phi_3^2}}& && x_3}, \dots,
\xymatrix@C=4pt{f^{n-1}_{n*}x_{n-1}
\ar@/^1pc/[rrr]
\ar@{}[rrr]|{\vdots}
 \ar@/_1pc/[rrr]
\ar@/^2.5pc/[rrr]^{u_n^0}_(.5)*+{\Downarrow\scriptstyle{\phi_n^1}}
\ar@/_2.5pc/[rrr]_-{u_n^{n-1}}^(.5)*+{\Downarrow\scriptstyle{\phi_n^{n-1}}}&&&x_n.
}
\end{equation}
That is,  $(x,u,\phi)_n$ consists of 0-cells $x_k$ of $D_{\!c_k}$,
$0\leq k\leq n$, 1-cells $u_m^k:f^{m-1}_{m*}x_{m-1}\to x_{m}$,
$0\leq k <m\leq n$, and 2-cells $\phi_m^k:u_m^{k-1}\Rightarrow
u_m^{k}$, $0<k<m\leq n$. Further, the $i$-face of the simplex
$((c,f,\alpha)_n,(x,u,\phi)_n)$ is obtained by taking the $i$-face
of $(c,f,\alpha)_n$ in the simplicial set $\W\ner\ner C$ and, in a
similar way, by deleting the object $x_i$ and the $1$-cells
$u_m^{i}:f_{m*}^{m-1}x_{m-1}\to x_m$, for $i< m$, and then using the
composite 1-cells
$$
\xymatrix@C=40pt{f_{i+1*}^{i-1}f^{i-1}_{i*}x_{i-1}\ar[r]^-{{\alpha^i_{i+1*}}f^{i-1}_{i*}x_{i-1}}&
f_{i+1*}^{i}f^{i-1}_{i*}x_{i-1}\ar[r]^-{f_{i+1*}^{i} u^k_i}&f_{i+1*}^{i} x_i
\ar[r]^-{u^k_{i+1}}&x_{i+1}}
$$
for $k<i$, the horizontally composite 2-cells
$$
\xymatrix@C=40pt{f_{i+1*}^{i-1}f^{i-1}_{i*}x_{i-1}\ar@/^1.5pc/[r]^-{\alpha^i_{i+1*}f^{i-1}_{i*}x_{i-1}}
\ar@/_1.5pc/[r]_-{\alpha^i_{i+1*}f^{i-1}_{i*}x_{i-1}}\ar@{}[r]|(.55){\Downarrow 1}&
f_{i+1*}^{i}f^{i-1}_{i*}x_{i-1}\ar@/^1.5pc/[r]^-{f_{i+1*}^{i} u^{k-1}_i}
\ar@/_1.5pc/[r]_-{f_{i+1*}^{i} u^k_i}\ar@{}[r]|(.55){\Downarrow f^i_{i+1*}\phi^k_i}&f_{i+1*}^{i} x_i
\ar@/^1.5pc/[r]^-{u^{k-1}_{i+1}}\ar@/_1.5pc/[r]_-{u^k_{i+1}}\ar@{}[r]|(.55){\Downarrow \phi^k_{i+1}}&x_{i+1}}
$$
for $0<k<i$, and the vertically composed 2-cells $\phi^{i+1}_m\cdot
\phi^i_m:u^{i-1}_m\Rightarrow u^{i+1}_m$, $i<m-1$, to complete the
new $(n-1)$-simplex. Similarly, the $i$-degeneracy of
$((c,f,\alpha)_n,(x,u,\phi)_n)$ is given by first taking the
$i$-degeneracy of of $(c,f,\alpha)_n$ in the simplicial set
$\W\ner\ner C$ and second by repeating the object $x_i$, inserting
$i+1$ times the identity 1-cell $1_{x_i}:x_i\to x_i$, $i$ times the
identity 2-cell $1_{1_{x_i}}:1_{x_i}\Rightarrow 1_{x_i}$ and, for
each $i<m$, by replacing the 1-cell $u_m^{i}$ by the identity 2-cell
$1_{u_m^i}:u_m^i\Rightarrow u_m^i$.

There is another way to get the same simplicial set $\W([p]\mapsto
\W\ner\ner\text{hocolim}_C\D_p)$, which is as follows:

Let $E$ be the trisimplicial set whose $(p,n,q)$-simplices are pairs
\begin{equation}\label{bipqn}((c,f,\alpha)_{p,q},(x,u,\phi)_{p,n,q})\end{equation} with
$(c,f,\alpha)_{p,q}$ a $(p,q)$-simplex of $\ner\ner C$, as in
\eqref{pqnn}, and $(x,u,\phi)_{p,n,q}$ a system of data consisting
of a diagram in each 2-category $\D_{\!c_{q+1}}$, ..., $\D_{\!c_{q+p}}$
of the form
$$
\xymatrix@C=10pt{(x,u,\phi)_{p,n,q}:&f^q_{q+1*}x_q
\ar@/^1pc/[rrr]
\ar@{}[rrr]|{\vdots}
 \ar@/_1pc/[rrr]
\ar@/^2.5pc/[rrr]^{u_{q+1}^0}_(.5)*+{\Downarrow\scriptstyle{\phi_{q+1}^1}}
\ar@/_2.5pc/[rrr]_-{u^n_{q+1}}^(.5)*+{\Downarrow\scriptstyle{\phi^n_{q+1}}}
&&& x_{q+1},} \ldots,
\xymatrix@C=10pt {
 f^q_{q+p\,*}x_{q+p-1} \ar@/^1pc/[rrr]
\ar@{}[rrr]|{\vdots}
 \ar@/_1pc/[rrr]
\ar@/^2.5pc/[rrr]^{u_{q+p}^0}_(.5)*+{\Downarrow\scriptstyle{\phi_{q+p}^1}}
\ar@/_2.5pc/[rrr]_-{u_{q+p}^{n}}^(.5)*+{\Downarrow\scriptstyle{\phi_{q+p}^{n}}}&&& x_{q+p}.}
$$
That is, it consists of objects $x_{q}\in D_{\!c_q}$, $\ldots$,
$x_{q+p}\in \D_{\!c_{q+p}}$, 1-cells
$u^k_{q+m}:f^q_{q+m*}x_{q+m-1}\to x_{q+m}$, $0\leq k\leq n$, $0<
m\leq p$, and 2-cells $\phi^k_{q+m}:u^{k-1}_{q+m}\Rightarrow
u^{k}_{q+m}$, $0< k\leq n$, $0< m\leq p$.

The $i$-face in the $p$-direction map of $E$ carries
the $(p,n,q)$-simplex \eqref{bipqq} to the $(p-1,n,q)$-simplex  obtained by taking the horizontal
$i$-face of $(c,f,\alpha)_{p,q}$ in $\ner\ner C$, deleting the
object $x_{q+i}$, and  using the composite 1-cells
$$
\xymatrix@C=40pt{f^{q}_{q+i+1*}f^{q}_{q+i*}x_{q+i-1}\ar[r]^-{f_{q+i+1*}^{q} u^k_{q+i}}&f_{q+i+1*}^{q} x_{q+i}
\ar[r]^-{u^k_{q+i+1}}&x_{q+i+1}}
$$
and the horizontally composite 2-cells
$$
\xymatrix@C=60pt{
f_{q+i+1*}^{q}f^{q}_{q+i*}x_{q+i-1}\ar@/^1.5pc/[r]^-{f_{q+i+1*}^{q} u^{k-1}_{q+i}}
\ar@/_1.5pc/[r]_-{f_{q+i+1*}^{q} u^k_{q+i}}\ar@{}[r]|(.56){\Downarrow f^q_{q+i+1*}\phi^k_{q+i}}&f_{q+i+1*}^{q}
x_{q+i}
\ar@/^1.5pc/[r]^-{u^{k-1}_{q+i+1}}\ar@/_1.5pc/[r]_-{u^k_{q+i+1}}
\ar@{}[r]|(.55){\Downarrow \phi^k_{q+i+1}}&x_{q+i+1}}
$$ to complete the new $(p-1,n,q)$-simplex.

The $j$-face in the $n$-direction of the $(p,n,q)$-simplex \eqref{bipqq} is obtained  by keeping $(c,f,\alpha)_{p,q}$
unaltered, deleting all the $1$-cells $u^j_{q+m}$, and using, when
$0<j<n$, the composite 2-cells $\phi^{j+1}_{q+m}\cdot
\phi^{j}_{q+m}$ to complete the simplex.

 For any $k<q$, the $k$-face in the $q$-direction of the $(p,n,q)$-simplex \eqref{bipqq}
is given by replacing $(c,f,\alpha)_{p,q}$ by its
vertical $k$-face in $\ner\ner C$ and keeping
$(x,u,\phi)_{p,n,q}$ unchanged, while its $q$-face
consists of  the vertical $q$-face of $(c,f,\alpha)_{p,q}$  in $\ner\ner
C$ (which, recall, is obtained by deleting the $1$-cells
$f^q_{q+m}$) together with the list of diagrams
$$
\xymatrix@C=12pt{f^{q-1}_{q+1*}x_q
\ar@/^1pc/[rrr]
\ar@{}[rrr]|{\vdots}
 \ar@/_1pc/[rrr]
\ar@/^2.5pc/[rrr]^{u_{q+1}^0\circ \alpha^q_{q+1*}x_q}_(.5)*+{\Downarrow\scriptstyle{\phi_{q+1}^1\circ 1}}
\ar@/_2.5pc/[rrr]_-{u^n_{q+1}\circ \alpha^q_{q+1*}x_q}^(.5)*+{\Downarrow\scriptstyle{\phi^n_{q+1}\circ 1}}
&&& x_{q+1},} \ldots,
\xymatrix@C=10pt {
 f^{q-1}_{q+p\,*}x_{q+p-1} \ar@/^1pc/[rrr]
\ar@{}[rrr]|{\vdots}
 \ar@/_1pc/[rrr]
\ar@/^2.5pc/[rrr]^{u_{q+p}^0\circ \alpha^q_{q+1*}x_q}_(.5)*+{\Downarrow\scriptstyle{\phi_{q+p}^1}\circ 1}
\ar@/_2.5pc/[rrr]_-{u_{q+p}^{n}\circ \alpha^q_{q+1*}x_q}^(.5)*+{\Downarrow\scriptstyle{\phi_{q+p}^{n}
\circ 1}}
&&& x_{q+p}.}
$$

With degeneracies given in a standard way, it is straightforward to see that $E$ is a trisimplicial set. Then, an easy verification shows that
\begin{equation}\label{112}
\W([p]\mapsto \Diag E_{p,\bullet,\bullet})\cong\W([p]\mapsto
\W\ner\ner\text{hocolim}_C\D_p),
\end{equation}
and therefore we have homotopy equivalences
\begin{equation}\label{113} \thb\mbox{hocolim}_C\D\thb
\overset{\eqref{111}\eqref{112}}\simeq |\W([p]\mapsto \Diag E_{p,\bullet,\bullet})|\overset{\eqref{diwb}}\simeq |\W([p]\mapsto \W
E_{p,\bullet,\bullet})|.\end{equation}

Now, an analysis of the simplicial set $\W E_{p,\bullet,\bullet}$
says that its $q$-simplices are pairs
\begin{equation}\label{bipqq}((c,f,\alpha)_{p,q},(x,u,\phi)_{p,q})\end{equation}
with a $(p,q)$-simplex $(c,f,\alpha)_{p,q}$ of $\ner\ner C$, as in
\eqref{pqnn}, together with  data $(x,u,\phi)_{p,q}$ consisting of a
diagram in each 2-category $\D_{\!c_{q+1}}$, ..., $\D_{\!c_{q+p}}$ of
the form
$$
\xymatrix@C=50pt@R=16pt{f^0_{q+1*}x_q\ar@<-2pt>[d]_{\alpha^1_{q+1*}x_q}\ar@<3pt>[rdd]^{u^0_{q+1}}& \\
f^1_{q+1*}x_q\ar[rd]
\ar@{}[r]|(.4){\Downarrow\phi^1_{q+1}}
\ar@{}[dd]|{\vdots}&\\
&x_{q+1}, \\
f^{q-1}_{q+1*}x_q\ar@<-2pt>[d]_{\alpha^q_{q+1*}x_q}\ar[ru]
\ar@{}[r]|(.4){\Downarrow\phi^q_{q+1}}
& \\
f^q_{q+1*}x_q\ar@<-3pt>[ruu]_{u^q_{q+1}}& }
\xymatrix@C=3pt@R=15pt{ &\\ & \\ &\\&\ldots, \\ &}
\xymatrix@C=50pt@R=16pt{f^0_{q+p*}x_{q+p-1}\ar@<-2pt>[d]_{\alpha^1_{q+p*}x_{q+p-1}}\ar@<3pt>[rdd]^{u^0_{q+p}}& \\
f^1_{q+p*}x_{q+p-1}\ar[rd]
\ar@{}[r]|(.4){\Downarrow\phi^1_{q+p}}
\ar@{}[dd]|{\vdots}& \\
&x_{q+p}. \\
f^{q-1}_{q+p*}x_{q+p-1}\ar@<-2pt>[d]_{\alpha^q_{q+p*}x_{q+p-1}}\ar[ru]
\ar@{}[r]|(.4){\Downarrow\phi^q_{q+p}}
& \\
f^q_{q+p*}x_{q+p-1}\ar@<-3pt>[ruu]_{u^q_{q+p}}&}
$$
More precisely, $(x,u,\phi)_{p,q}$  consists of objects $$x_{q}\in
\D_{\!c_q}, \ldots, x_{q+p}\in \D_{\!c_{q+p}},$$ 1-cells
$$u^k_{q+m}:f^k_{q+m*}x_{q+m-1}\to x_{q+m}, \hspace{0.4cm}(0\leq k\leq q,\, 0<
m\leq p)$$ and 2-cells $$\phi^k_{q+m}:u^{k-1}_{q+m}\Rightarrow
 u^{k}_{q+m}\circ \alpha^k_{q+m*}x_{q+m-1} \hspace{0.4cm} (0<
k\leq n,\, 0< m\leq p).$$ The $j$-face of such a $q$-simplex
\eqref{bipqq} is given by taking the vertical $j$-face of
$(c,f,\alpha)_{p,q}$ in $\ner\ner C$, deleting the 1-cells
$u^j_{q+m}$, and inserting the pasted 2-cells below, for $0<j<q$.
$$
\xymatrix{f^{j-1}_{q+m*}x_{q+m-1}
\ar[dd]_{(\alpha^{j+1}_{q+m}\cdot \alpha^j_{q+m})_*x_{q+m-1}}
\ar@/^1.5pc/[rrd]^{u^{j-1}_{q+m}}\ar[rd]|{\alpha^j_{q+m*}x_{q+m-1}}&
\ar@{}[d]|(.5){\Downarrow \phi^j_{q+m}}& \\
\ar@{}[r]|(.35){=} &f^{j}_{q+m*}x_{q+m-1}\ar[r]\ar@{}[d]|(.5){\Downarrow \phi^{j+1}_{q+m}}
\ar[ld]|{\alpha^{j+1}_{q+m*}x_{qm-1}}&x_{q+m} \\
f^{j+1}_{q+m*}x_{q+m-1}\ar@/_1.5pc/[rru]_{u^{j+1}_{q+m}}&&
}
$$

Then, an easy and straightforward verification shows that an
$n$-simplex of the simplicial set $\W([p]\mapsto
\W E_{p,\bullet,\bullet})$ is a pair
\begin{equation}\label{twotri2}((c,f,\alpha)_n,(x,u,\phi)_n),\end{equation}
where $(c,f,\alpha)_n$ is an $n$-simplex of $\W \ner\ner C$ as in
\eqref{simbarw}, while $(x,u,\phi)_n$ is a list with a diagram in
each 2-category $\D_{\!c_0}$,...,$\D_{\!c_n}$ of the form
$$
\xymatrix@R=3pt@C=10pt{&&&&&&&&&f^0_{n*}x_{n-1}\ar@{}@<18pt>[dd]|(.65){\Downarrow\phi^1_n}\ar[dd]\ar@{}@<3pt>[dd]_{\alpha^1_{n*}x_{n-1}}
\ar[rrrddd]^{u^0_n}&&\\
& & & & &f^0_{3*}x_2\ar[dd]\ar@{}@<3pt>[dd]_{\alpha^1_{3*}x_2}\ar[rrdd]^{u^0_3}
\ar@{}@<14pt>[dd]|(.6){\Downarrow\phi^1_3}&&&&&&\\
&&&f^0_{2*}x_1\ar[dd]\ar@{}@<3pt>[dd]_{\alpha^1_{2*}x_1}\ar[rd]^{u^0_2}\ar@{}@<12pt>[dd]|{\Downarrow\phi^1_2}
&&&&&&f^1_{n*}x_{n-1}\ar[rrrd]\ar@{}[dd]|{\vdots}&&\\
x_0,&\hspace{-10pt}f^0_{1*}x_0\ar[r]^{u^0_1}&x_1,&\hspace{-10pt} &x_2,&f^1_{3*}x_2\ar[rr]
\ar@{}@<-2pt>[rr]^{u^1_3}
\ar@{}@<14pt>[dd]|(.35){\Downarrow\phi^2_3}
\ar[dd]\ar@{}@<3pt>[dd]_{\alpha^2_{3*}x_2}&&x_3,&\ldots,&\hspace{-10pt}&&&x_n,\\
&&&f^1_{2*}x_1\ar[ru]_{u^1_2}&&&&&&f^{n-2}_{n*}x_{n-1}
\ar@{}@<18pt>[dd]|(.35){\Downarrow\phi^{n-1}_n}
\ar[rrru]\ar[dd]\ar@{}@<3pt>[dd]_{\alpha^{n-1}_{n*}x_{n-1}}
&&&\\
&&&&&f^2_{3*}x_2\ar[rruu]_{u^2_3}&&&&&&\\
&&&&&&&&&f^{n-1}_{n*}x_{n-1}\ar[rrruuu]_{u^{n-1}_n}&&
}
$$
That is,  $(x,u,\phi)_n$ consists of 0-cells
$$x_{0}\in
\D_{\!c_0}, \ldots, x_{n}\in \D_{\!c_{n}},$$ 1-cells
$$u^k_{m}:f^k_{m*}x_{m-1}\to x_{m}, \hspace{0.4cm}(0\leq k<m\leq n)$$
and 2-cells
$$\phi^k_{m}:u^{k-1}_{m}\Rightarrow
 u^{k}_{m}\circ \alpha^k_{m*}x_{m-1} \hspace{0.4cm} (0<k<m\leq n).$$

 Further, the $i$-face of the simplex
$((c,f,\alpha)_n,(x,u,\phi)_n)$ is obtained by taking the $i$-face
of $(c,f,\alpha)_n$ in the simplicial set $\W\ner\ner C$ and, in a
similar way, by deleting the object $x_i$ and all the $1$-cells
$u_m^{i}:f_{m*}^{i}x_{m-1}\to x_m$, for $i< m$, and then using the
composite 1-cells
$$
\xymatrix@C=40pt{f^{k}_{i+1*}f^{k}_{i*}x_{i-1}\ar[r]^-{f_{i+1*}^{k} u^k_{i}}&f_{i+1*}^{k} x_{i}
\ar[r]^-{u^k_{i+1}}&x_{i+1}}
$$
for $k<i$, and the pasted 2-cells
$$
\xymatrix{f^{k-1}_{i+1*}f^{k-1}_{i*}x_{i-1}\ar[dd]_{(\alpha^k_{i+1}\circ \alpha^k_i)_*x_{i-1}}
\ar[dr]|{f^{k-1}_{i+1*}\alpha^{k}_{i*}x_{i-1}}
\ar[rr]^-{f^{k-1}_{i+1*}u^{k-1}_i}&  \ar@{}[d]|(.35){\Downarrow f^{k-1}_{i+1*}\phi^k_i}&
f^{k-1}_{i+1*}x_{i}\ar[rd]^{u^{k-1}_{i+1}}\ar[dd]_{\alpha^{k}_{i+1*}x_i}
 \ar@{}@<24pt>[dd]|(.5){\Downarrow \phi^k_{i+1}}& \\
\ar@{}[r]|(.35){=} & f^{k-1}_{i+1*}f^{k}_{i*}x_{i-1}
\ar@{}[rd]|(.4){=}
 \ar[dl]|{\alpha^{k}_{i+1*}f^{k}_{i*}x_{i-1}}
 \ar[ru]|{f^{k-1}_{i+1*}u^k_i}& & x_{i+1} \\
 f^{k}_{i+1*}f^{k}_{i*}x_{i-1}\ar[rr]^-{f^{k}_{i+1*}u^{k}_i}&&f^{k}_{i+1*}x_{i}
\ar[ru]_{u^k_{i+1}}
 &
}
$$
for $0<k<i$, and
$$
\xymatrix{f^{i-1}_{m*}x_{m-1}
\ar[dd]_{(\alpha^{i+1}_{m}\cdot \alpha^i_m)_*x_{m-1}}
\ar@/^1.5pc/[rrd]^{u^{i-1}_m}\ar[rd]|{\alpha^i_{m*}x_{m-1}}&
\ar@{}[d]|(.5){\Downarrow \phi^i_{m}}& \\
\ar@{}[r]|(.35){=} &f^{i}_{m*}x_{m-1}\ar[r]\ar@{}[d]|(.5){\Downarrow \phi^{i+1}_{m}}\ar[ld]|{\alpha^{i+1}_{m*}x_{m-1}}&x_m \\
f^{i+1}_{m*}x_{m-1}\ar@/_1.5pc/[rru]_{u^{i+1}_m}&&
}
$$
for $0<i<m-1$, to complete the $i$-face $(n-1)$ simplex. Similarly,
the $i$-degeneracy of $((c,f,\alpha)_n,(x,u,\phi)_n)$ is given by
first taking the $i$-degeneracy of $(c,f,\alpha)_n$ in the
simplicial set $\W\ner\ner C$ and secondly by repeating the object
$x_i$, inserting $i+1$ times the identity 1-cell $1_{x_i}:x_i\to
x_i$, $i$ times the identity 2-cell $1_{1_{x_i}}:1_{x_i}\Rightarrow
1_{x_i}$ and, for each $i<m$, by repeating  the 1-cell
$u_m^{i}:f^i_{m*}x_{i-1}\to x_i$ and inserting the identity 2-cell
$1_{u_m^i}:u_m^i\Rightarrow u_m^i$.

Finally, observe that any $n$-simplex
$((c,f,\alpha)_n,(x,u,\phi)_n)$ of  $\W([p]\mapsto
\W E_{p,\bullet,\bullet})$, such as \eqref{twotri2}, identifies
with the $n$-simplex $((c,x),(f,u),(\alpha,\phi))_n \in \W\ner\ner
\int_{C}\D$,
$$
\xymatrix @C=6pt {(c_0,x_0)\ar[rr]^{(f^1_0,u_1^0)} && (c_1,x_1)
 \ar@/^1.5pc/[rr]^{(f^0_2,u_2^0)} \ar@/_1.5pc/[rr]_-{(f^1_2,u_2^1)}
& {\Downarrow \scriptstyle{(\alpha^1_2,\phi_2^1)}} & \hspace{-6pt}
(c_2,x_2) \ar[rrr]
\ar@/^2pc/[rrr]^{(f^0_3,u_3^0)}_(.4)*+{\Downarrow\scriptstyle{(\alpha^1_3,\phi_3^1)}}
\ar@/_2pc/[rrr]_-{(f^2_3,u_3^2)}^(.4)*+{\Downarrow \scriptstyle{(\alpha^2_3,\phi_3^2)}}& &&
(x_3, c_3) &\hspace{-4pt}\cdots&\hspace{-4pt}(x_{n-1},c_{n-1})
\ar@/^1pc/[rrr]
\ar@{}[rrr]|{\vdots}
 \ar@/_1pc/[rrr]
\ar@/^2.7pc/[rrr]^{(f^0_n,u_n^0)}\ar@{}@<20pt>[rr]|(.8){\Downarrow\scriptstyle{(\alpha^1_n,\phi_n^1)}}
\ar@/_2.7pc/[rrr]_-{(f^{n-1}_n,u_n^{n-1})}
\ar@{}@<-18pt>[rr]|(.8){\Downarrow\scriptstyle{(\!\alpha^{n-1}_n\!\!\!,\phi_n^{n-1}\!)}}&
&& (c_n,x_n),
}
$$
given by the objects $(c_m,x_m)$ of $\int_C\D$, $0\leq m\leq n$, the
1-cells $$(f^k_m,u_m^k):(c_{m-1},x_{m-1})\to (c_m,x_{m}),
\hspace{0.4cm}(0\leq k <m\leq n),$$ and 2-cells
$$(\alpha^k_m,\phi_m^k):(f^{k-1}_m,u_m^{k-1})\Rightarrow
(f^k_m,u_m^{k})\hspace{0.4cm}(0<k<m\leq n).$$ Thus, we have the simplicial isomorphism
\begin{equation}\label{114}
\xymatrix{\W([p]\mapsto
\W E_{p,\bullet,\bullet})\cong\W\ner\ner\int_{C}\D,}
\end{equation}
and therefore the homotopy equivalence
$$
\xymatrix{\thb \text{hocolim}_C\D \thb \overset{\eqref{113}\eqref{114}}\simeq |\W\ner\ner\int_{C}\D|.}
$$
 Since, for the 2-category $\int_{\!B}\D$, we have the
natural homotopy equivalence
$$\xymatrix{\tb \int_{\!B}\D\tb=|\Diag \ner\ner \int_{\!B}\D|\overset{\eqref{diwb}}\simeq |\W \ner\ner\int_{\!B}\D|,}$$
the proof is complete.
\end{proof}
After Theorem \ref{hct}, for any 2-functor $\D:C\to
\underline{\mathbf{2}\Cat}$ (or $\D:C^{\mathrm{op}}\to
\underline{\mathbf{2}\Cat}$), {\em we do not distinguish between
the classifying spaces of the simplicial $2$-category $\mathrm{hocolim}_C\D$ and of the
$2$-category $\int_C\D$}, since both represent the same homotopy
type in a natural way.

\section{The  homotopy-fibre 2-functors}  In this section, we mainly review
some necessary results concerning the more striking examples of
2-diagrams of 2-categories: the 2-diagrams of homotopy-fibre
2-categories of a 2-functor.

 As usual, if $\D:C\to \underline{\mathbf{2}\Cat}$ (resp. $\D:C^{\mathrm{op}} \to \underline{\mathbf{2}\Cat}$) and
  $F:A\to C$ are 2-functors,  we denote
by $$F^*\D=\D\,F:A\to \underline{\mathbf{2}\Cat}$$ (resp.
$F^*\D=\D\,F:A^{\mathrm{op}} \to \underline{\mathbf{2}\Cat}$) the
2-functor obtained by composing $\D$ with $F$.

Let $F:A\to C$ be any given 2-functor. Then, for any object $c\in
C$, the {\em homotopy-fibre of $F$ over $c$}
~\cite{Gray1969,Cegarra-2011}, denoted by $F\!\downarrow \!c$, is the
2-category obtained by applying the Grothendieck construction on the
2-functor $F^*C(-,c):A^{\mathrm{op}}\to \underline{\Cat}$, where
$C(-,c):C^{\mathrm{op}}\to \underline{\Cat}$ is the hom 2-functor, that is,
$$
\xymatrix{F\!\downarrow \!c \,=\int_AF^*C(-,c).  }
$$
 Thus, $F\!\downarrow \!c$
has objects the pairs $(a,p)$, with $a$ a 0-cell of $A$ and $p:Fa\to
c$ a 1-cell of $C$. A 1-cell $(u,\phi):(a,p)\to (a',p')$ consists of
a $1$-cell $u:a\to a'$ in $A$, together with a $2$-cell
$\phi:p\Rightarrow p'\circ Fu$ in the 2-category $C$,
$$
\xymatrix@C=10pt@R=14pt{ Fa\ar[rr]^{Fu}\ar[rd]_{p}&
\ar@{}[d]|(.27){\phi}|(.48){\Rightarrow}&Fa'\ar[ld]\ar@{}@<-2pt>[ld]^(.45){p'}\\
 &c& }
$$
and, for $(u,\phi),(u',\phi'):(a,p)\to (a',p')$, a 2-cell
$\alpha:(u,\phi)\Rightarrow (u',\phi')$ is a  2-cell
$\alpha:u\Rightarrow u'$ in $A$ such that $(1_{p'}\circ
F\alpha)\cdot \phi=\phi'$. Compositions and identities are given canonically.

Any 1-cell $h:c\to c'$ in $C$ gives rise to a 2-functor
$$h_*:F\!\downarrow \!c \to F\!\downarrow \!c',$$
which acts on cells by
$$
 \xymatrix @C=8pt{(a,p) \ar@/^1pc/[rr]^{\ (u,\phi)} \ar@/_1pc/[rr]_-{\ (u',\phi')}
\ar@{}[rr]|{\Downarrow\! \alpha}&  & (a',p')}
\xymatrix{\ar@{|->}[r]^{h_*}&}
 \xymatrix @C=1pc{(a,h\circ p)\ \ar@/^1pc/[rr]^{\ (u,1_h\circ \phi)}
\ar@/_1pc/[rr]_-{\ (u',1_h\circ \phi')}\ar@{}[rr]|{\Downarrow\! \alpha} &  & (a',h\circ p')},
$$
and, for $h,h':c\to c'$, any 2-cell $\psi:h\Rightarrow h'$ in $C$
produces a 2-transformation $$\psi_*:h_*\Rightarrow h'_*,$$ whose
component at any object $(a,p)$ of $F\!\downarrow \!c$ is the 1-cell
of $F\!\downarrow \!c'$
$$
\psi_*(a,p)= (1_a,\psi\circ 1_p):(a,h\circ p)\to (a,h'\circ p).
$$

In this way, we have the {\em homotopy-fibre $2$-functor}
\begin{align}\nonumber
F\!\downarrow \!-\ :&\xymatrix@R=0pt@C=35pt{C\ar[r]^-{\mathcal{Y}}&\underline{\Cat}^{C^{\mathrm{op}}}
\ar[r]^{F^*}&\underline{\Cat}^{A^{\mathrm{op}}}\ar[r]^-{\int_A-}&\underline{\mathbf{2}\Cat}},\\[-2pt] \nonumber
&\xymatrix@C=30pt{c\ar@{|->}[r]&C(-,c)\ar@{|->}[r]&F^*C(-,c)\ar@{|->}[r]&
\int_AF^*C(-,c)=F\!\downarrow \!c}
\end{align}
where $\mathcal{Y}$ is the 2-categorical Yoneda embedding; and,
quite similarly, we also have  the {\em homotopy-fibre $2$-functor}
\begin{align}\nonumber
-\!\downarrow \!F\ :&\xymatrix@R=0pt@C=35pt{C^{\mathrm{op}}
\ar[r]^-{\mathcal{Y}}&\underline{\Cat}^{C}
\ar[r]^{F^*}&\underline{\Cat}^{A}\ar[r]^-{\int_A-}&\underline{\mathbf{2}\Cat}},\\[-2pt] \nonumber
&\xymatrix@C=30pt{c\ar@{|->}[r]&C(c,-)\ar@{|->}[r]&F^*C(c,-)\ar@{|->}[r]&
\int_AF^*C(c,-)=c\!\downarrow \!F}
\end{align}
which assigns to each object $c$ of $C$ the {\em homotopy-fibre
$2$-category of $F$ under $c$}, $c\!\downarrow \!F$, whose objects
are pairs $(a,c\overset{p}\to Fa)$. The 1-cells
$(u,\phi):(a,p)\to(a',p')$ are pairs where $u:a\to a'$ is a 1-cell
of $A$ and $\phi:Fu\circ p\Rightarrow p'$ is a 2-cell of $C$, and a
2-cell $\alpha:(u,\phi)\Rightarrow (u',\phi')$ is a 2-cell
$\alpha:u\Rightarrow u'$ in $A$ such that $\phi'\cdot (F\alpha\circ
1_p)=\phi$.

These 2-diagrams $F\!\downarrow \!-$ and $-\!\downarrow \!F $ are
relevant for homotopy interests, mainly because the projection
2-functors
$$\xymatrix{F\!\downarrow \!c\ar[r]^{\pi}& A & c\!\downarrow \!F,\ar[l]_{\pi}}$$
both given on cells by
$$
\pi: \xymatrix @C=8pt{(a,p) \ar@/^1pc/[rr]^{\ (u,\phi)} \ar@/_1pc/[rr]_-{\ (u',\phi')}
\ar@{}[rr]|{\Downarrow \alpha}&  & (a',p')}
\xymatrix{\ar@{|->}[r]&}
 \xymatrix @C=0.6pc{a \ar@/^0.7pc/[rr]^{ u}
\ar@/_0.7pc/[rr]_-{ u'}\ar@{}[rr]|{\Downarrow \alpha} &  & a'},
$$
induce 2-functors
$$
\xymatrix{\int_C \!(F\!\downarrow \!-) \ar[r]^-{\Pi}& A&
\int_C\! (-\!\downarrow\! F)
\ar[l]_-{\Pi}}
$$
$$
\Pi:  \xymatrix @C=12pt{(c,(a,p)) \ar@/^1pc/[rr]^{(h,(u,\phi))} \ar@/_1pc/[rr]_-{(h',(u',\phi'))}
\ar@{}[rr]|{\Downarrow (\psi,\alpha)}&  &(c', (a',p'))}
\xymatrix{\ar@{|->}[r]&}
 \xymatrix @C=0.6pc{a \ar@/^0.7pc/[rr]^{ u}
\ar@/_0.7pc/[rr]_-{ u'}\ar@{}[rr]|{\Downarrow \alpha} &  & a'},
$$
and we have the proposition below.
\begin{proposition}[~\cite{CCH2014} Lemma
5.5]\label{globcov} For any $2$-functor $F:A\to C$, both
$2$-functors $\Pi$ above are weak equivalences.
\end{proposition}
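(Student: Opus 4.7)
My plan is to establish both weak equivalences by exhibiting a common section and then applying Fact~\ref{fact1}. I treat the covariant case $\Pi:\int_C(F\!\downarrow\!-)\to A$ in detail; the contravariant one $\Pi:\int_C(-\!\downarrow\!F)\to A$ follows by a dual argument.

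First I would define a strict $2$-functor $\sigma:A\to \int_C(F\!\downarrow\!-)$ on cells by
\[
\sigma(a)=(Fa,(a,1_{Fa})),\qquad \sigma(u)=(Fu,(u,1)),\qquad \sigma(\alpha)=(F\alpha,1),
\]
where the unmarked ``$1$''s denote the identity $2$-cells supplied by the unit laws for $F$. Using strictness of $F$ and the explicit composition formula in $\int_C(F\!\downarrow\!-)$, a short verification shows that $\sigma$ is a strict $2$-functor, and $\Pi\circ\sigma=1_A$ holds on the nose, so it remains to prove that $\sigma\circ\Pi$ is homotopic to the identity on classifying spaces.

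Next I would construct an oplax $2$-transformation $\theta:\sigma\Pi\Rightarrow\mathrm{id}_{\int_C(F\!\downarrow\!-)}$ with components
\[
\theta_{(c,(a,p))}=(p,(1_a,1_p))\colon (Fa,(a,1_{Fa}))\longrightarrow (c,(a,p)),
\]
and, for each $1$-cell $(h,(u,\varphi)):(c,(a,p))\to(c',(a',p'))$, a naturality $2$-cell produced directly from $\varphi\colon h\circ p\Rightarrow p'\circ Fu$. Indeed, the two composites $(h,(u,\varphi))\circ \theta_{(c,(a,p))}$ and $\theta_{(c',(a',p'))}\circ\sigma\Pi(h,(u,\varphi))$ unfold to $1$-cells of $\int_C(F\!\downarrow\!-)$ whose first coordinates in $C$ are $h\circ p$ and $p'\circ Fu$, respectively, so that $\varphi$ is precisely the datum relating them. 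Applying Fact~\ref{fact1} to $\theta$ then yields a homotopy $\tb\sigma\Pi\tb\simeq\tb\mathrm{id}\tb$, which, combined with $\Pi\sigma=1_A$, proves that $\tb\Pi\tb$ is a homotopy equivalence.

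For the contravariant case the same section $\sigma$ works, but the natural comparison at an object $(c,(a,p\colon c\to Fa))$ is now the $1$-cell $(p,(1_a,1))\colon(c,(a,p))\to(Fa,(a,1_{Fa}))=\sigma\Pi(c,(a,p))$ given by $p$ itself, and assembles into a lax $2$-transformation $\mathrm{id}\Rightarrow\sigma\Pi$ to which Fact~\ref{fact1} applies identically. The main obstacle will be the bookkeeping required to verify the coherence axioms for $\theta$ (and its contravariant analogue)---compatibility with vertical and horizontal compositions of $1$- and $2$-cells---but this reduces to a direct diagram chase using the explicit formulas for composition in the Grothendieck construction and in the homotopy-fibre $2$-categories $F\!\downarrow\!c'$ and $c\!\downarrow\!F$.
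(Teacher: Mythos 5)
Your proposal is correct and follows essentially the same route as the paper: the paper also exhibits the section $\iota(a)=(Fa,(a,1_{Fa}))$ with $\Pi\iota=1$, the same oplax transformation $\iota\Pi\Rightarrow 1$ with components $(p,(1_a,1_p))$ and naturality $2$-cells $(\phi,1)$, and then invokes Fact~\ref{fact1}. The contravariant case is likewise handled as a parallel/dual argument.
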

\begin{proof} The 2-functor $\Pi:\int_C(F\! \downarrow\!-)\to
A$ is actually a retraction with section the 2-functor $\iota: A\to
\int_C(F\! \downarrow\!-)$ given by
$$
 \xymatrix @C=0.6pc{a \ar@/^0.7pc/[rr]^{ u}
\ar@/_0.7pc/[rr]_-{ u'}\ar@{}[rr]|{\Downarrow\! \alpha} &  & a'}
\xymatrix{\ar@{|->}[r]&}
 \xymatrix @C=14pt{(Fa,(a, 1_{Fa})) \ar@/^1.2pc/[rr]^{\ (Fu,(u,1_{Fu}))}
  \ar@/_1.2pc/[rr]_-{\ (Fu',(u',1_{Fu'}))}
\ar@{}[rr]|{\Downarrow\! (F\alpha,\alpha)}&  & (Fa',(a', 1_{Fa'}))}.
$$
It is clear that $\Pi\, \iota =1$. Furthermore, there is an oplax
transformation $\iota \Pi \Rightarrow 1$ given, on any object
$(c,(a,p))$ of $\int_C(F\! \downarrow\!-)$, by the 1-cell
$$
(p,(1_a,1_p)): (Fa,(a,1_{Fa}))\to (c,(a,p)),
$$
and whose component of naturality at any 1-cell
$(h,(u,\phi)):(c,(a,p))\to (c',(a',p'))$ is the 2-cell
$$
\xymatrix@C=90pt{(Fa,(a,1_{Fa}))\ar[r]^{(p,(1_a,1_p))}\ar[d]_{(Fu,(u,1_{Fu}))}
\ar@{}@<70pt>[d]|(.4){(\phi,1)}|(.55)\Leftarrow&
(c,(a,v))\ar[d]^{(h,(u,\phi))}\\
(Fa',(a',1_{Fa'}))\ar[r]_{(p',(1_{a'},1_{p'}))}&(c',(a',p')).
}
$$
Hence, for the induced maps by $\Pi$ and $\iota$ on classifying
spaces, we have $\tb \Pi\tb\,\tb \iota\tb =1$ and, by Fact
\ref{fact1}, $1\simeq \tb \iota\tb\,\tb \Pi\tb$. Therefore, we
conclude that the 2-functor $\Pi:\int_C(F\! \downarrow\!-)\to A$ is
a weak equivalence. The proof for $\Pi:\int_C(-\! \downarrow\!F)\to A$ is parallel.
\end{proof}

Therefore, for any 2-functor $F:A\to C$, both 2-diagrams of
homotopy-fibre 2-categories $F\! \downarrow\!-:C\to
\underline{\mathbf{2}\Cat}$ and $-\! \downarrow\!F:C^{\mathrm{op}}\to
\underline{\mathbf{2}\Cat}$  have the same homotopy type as the
``total" 2-category $A$ of the 2-functor $F$, in the standard sense
that there are homotopy equivalences $$\thb \mathrm{hocolim}_C(F\!
\downarrow\!-)\thb\simeq \tb A\tb\simeq \thb \mathrm{hocolim}_C(-\!
\downarrow\!F)\thb.$$

As a quick application, we have the  relative ``Quillen
Theorem A" by Chiche below. We will use the following notation: Given a commutative square in $\mathbf{2}\Cat$
$$
\xymatrix{
A \ar[r]^{F}
  \ar[d]_{G}
&
B \ar[d]^{H}
\\
D \ar[r]^{T}
&
C,
}
$$
for any object $d$ of $D$, let
\begin{equation}\label{fb}
\begin{array}{lll}
\bar{F}: G\!\downarrow\! d \to H\!\downarrow\! Td,
&&
\bar{F}: d\!\downarrow\! G \to Td \!\downarrow\! H,
\end{array}
\end{equation}
be the induced functors making commutative the squares
$$
\begin{array}{ll}
\xymatrix{
G\!\downarrow \! d \ar[r]^{\bar{F}}
  \ar[d]_{\pi}
&
H\!\downarrow\! Td \ar[d]^{\pi}
\\
A \ar[r]^{F}
&
B
}
&
\xymatrix{
d\!\downarrow \! G \ar[r]^{\bar{F}}
  \ar[d]_{\pi}
&
Td\!\downarrow\! H \ar[d]^{\pi}
\\
A \ar[r]^{F}
&
B.
}
\end{array}
$$
Both act on cells by
$$
\xymatrix @C=8pt{(a,p) \ar@/^1pc/[rr]^{\ (u,\phi)} \ar@/_1pc/[rr]_-{\ (u',\phi')}
\ar@{}[rr]|{\Downarrow\! \alpha}&  & (a',p')}
\xymatrix@C=12pt{\ar@{|->}[r]^{\bar{F}}&}
\xymatrix @C=10pt{(Fa,Tp) \ar@/^1pc/[rr]^{\ (Fu,T\phi)} \ar@/_1pc/[rr]_-{\ (Fu',T\phi')}
\ar@{}[rr]|{\Downarrow F\alpha}&  & (Fa',Tp').}
$$

\begin{theorem}[~\cite{Chich2015} Th\'eor\`{e}m 2.34] \label{rta}Let
$$
\xymatrix@C=20pt@R=20pt{A\ar[rr]^{F}\ar[rd]_{G}&&B\ar[ld]^{H}\\ &C&}
$$
be a commutative triangle of $2$-functors.

$(i)$ If, for any object $c\in C$, the induced functor
$
\bar{F}: G\!\downarrow \!c \longrightarrow H\!\downarrow \!c$
is a weak equivalence, then $F:A\to B$ is a weak equivalence.

\vspace{0.2cm}
$(ii)$ If, for any object $c\in C$, the induced functor
$
\bar{F}: c\!\downarrow \!G \longrightarrow c\!\downarrow \!H,
$
is a weak equivalence, then $F:A\to B$ is a weak equivalence.
\end{theorem}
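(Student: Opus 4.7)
The plan is to deduce~(i) from three earlier results: the Homotopy Invariance Theorem~\ref{ith}, the Homotopy Colimit Theorem~\ref{hct}, and Proposition~\ref{globcov} on the weak equivalence $\Pi$. The first step is to observe that the 2-functors $\bar{F}:G\!\downarrow\!c\to H\!\downarrow\!c$ from~\eqref{fb} assemble into a (strict) 2-natural transformation $\bar{F}:G\!\downarrow\!-\Rightarrow H\!\downarrow\!-$ between the homotopy-fibre 2-functors $G\!\downarrow\!-,\,H\!\downarrow\!-:C\to\underline{\mathbf{2}\Cat}$. This is immediate from the equality $HF=G$: post-composition by a 1-cell $h:c\to c'$ or a 2-cell $\psi:h\Rightarrow h'$ in $C$ acts only on the second component $p$, whereas $\bar{F}$ acts only on the first component by $a\mapsto Fa$, so the two operations commute on the nose.

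Assuming each component $\bar{F}_{\!c}$ is a weak equivalence, the Homotopy Invariance Theorem~\ref{ith} yields a homotopy equivalence
$$\thb\mathrm{hocolim}_C(G\!\downarrow\!-)\thb\ \simeq\ \thb\mathrm{hocolim}_C(H\!\downarrow\!-)\thb,$$
which by the Homotopy Colimit Theorem~\ref{hct} translates into the statement that the induced 2-functor
$$\int_C\bar{F}:\int_C(G\!\downarrow\!-)\longrightarrow\int_C(H\!\downarrow\!-)$$
is itself a weak equivalence of 2-categories.

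Next I would form the commutative square of 2-categories
$$\xymatrix@C=40pt{\int_C(G\!\downarrow\!-)\ar[r]^{\int_C\bar{F}}\ar[d]_{\Pi}&\int_C(H\!\downarrow\!-)\ar[d]^{\Pi}\\ A\ar[r]_{F}&B,}$$
whose commutativity is immediate from the definitions, since both composites send an object $(c,(a,p))$ to $Fa$. The two vertical projections are weak equivalences by Proposition~\ref{globcov}, and the top horizontal arrow is a weak equivalence by the previous paragraph, so the two-out-of-three property in the Thomason model structure on $\mathbf{2}\Cat$ forces $F:A\to B$ to be a weak equivalence. This proves~(i).

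Part~(ii) follows by an entirely parallel argument, using the dual homotopy-fibre 2-functors $-\!\downarrow\!G,\,-\!\downarrow\!H:C^{\mathrm{op}}\to\underline{\mathbf{2}\Cat}$ together with the projection $\Pi:\int_C(-\!\downarrow\!G)\to A$ also furnished by Proposition~\ref{globcov}. No step is a serious obstacle: the substance of the theorem has already been absorbed into the Homotopy Invariance Theorem, the Homotopy Colimit Theorem, and the weak equivalence $\Pi$, and the only remaining work is the routine verification of 2-naturality of $\bar{F}$ and the commutativity of the square above.
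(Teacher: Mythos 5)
Your proposal is correct and follows essentially the same route as the paper: the paper's proof of $(i)$ consists precisely of the commutative square relating $\int_C\bar{F}$ to $F$ via the projections $\Pi$, with the top arrow a weak equivalence by the Homotopy Invariance Theorem together with the Homotopy Colimit Theorem, the verticals weak equivalences by Proposition~\ref{globcov}, and the conclusion by two-out-of-three. Your added remarks on the $2$-naturality of $\bar{F}$ and the commutativity of the square are details the paper leaves implicit, and part $(ii)$ is handled there just as you indicate, by the parallel dual argument.
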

\begin{proof} $(i)$ The 2-functor $F$ occurs in the commutative square
$$
\xymatrix{\int_C \!(G\!\downarrow \!-) \ar[r]^{\int_C\bar{F}} \ar[d]_{\Pi}&
 \int_C \!(H\!\downarrow \!-) \ar[d]^{\Pi}\\
A\ar[r]^F&B,
}
$$
where the vertical 2-functors $\Pi$ are weak equivalences and the
horizontal 2-functor at the top is a weak equivalence by the
Homotopy Invariance Theorem \ref{ith} (and the Homotopy Colimit
Theorem \ref{hct}), whence the result follows.
\end{proof}

In the particular case where $F=1_C$ is the identity 2-functor on
$C$, we have the {\em slice $2$-categories} $C\!\downarrow \!c$, of
objects over an object $c$, and $c\!\downarrow \!C$, of objects
under $c$. Proposition \ref{globcov} tells us that there are weak
equivalences $\int_C \!(C\!\downarrow \!-)\to C \leftarrow \int_C
\!(-\!\downarrow \!C)$, which is exactly what is expected from Corollary
\ref{cor1} because we have the lemma below.
\begin{lemma}[~\cite{B-C2003} Theorem 4.1]\label{lcont} For any $2$-category $C$ and any object $c\in
C$, the $2$-categories $c\!\downarrow \!C$ and $C\!\downarrow \!c$
are weakly contractible.
\end{lemma}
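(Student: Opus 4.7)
The plan is to exhibit, for the slice $2$-category $C\!\downarrow\!c$, a lax transformation between the identity $2$-functor and the constant $2$-functor at $(c,1_c)$, and then invoke Fact \ref{fact1} to deduce that the identity map on $\tb C\!\downarrow\!c\tb$ is null-homotopic. This mimics the idea that a $1$-category with a terminal object has contractible classifying space, but now upgraded to the $2$-categorical setting, where ``terminality'' of $(c,1_c)$ holds only up to an invertible (or even non-invertible) $2$-cell; the right notion to carry this weaker universality to classifying spaces is precisely a lax transformation.

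Concretely, let $1, T : C\!\downarrow\!c \to C\!\downarrow\!c$ denote the identity and the constant $2$-functor with value $(c, 1_c)$. For each object $(a,p)$, take
$$
\sigma_{(a,p)} = (p, 1_p) : (a, p) \longrightarrow (c, 1_c),
$$
using the identification $1_c \circ p = p$ so that $1_p : p \Rightarrow 1_c \circ p$ is a valid naturality $2$-cell of the slice. For each $1$-cell $(u, \phi) : (a, p) \to (a', p')$ of $C\!\downarrow\!c$, a quick computation gives
$$
T(u,\phi) \circ \sigma_{(a,p)} = (p, 1_p), \qquad \sigma_{(a',p')} \circ 1(u,\phi) = (p' \circ u, \phi),
$$
and the $2$-cell $\phi : p \Rightarrow p' \circ u$ of $C$ qualifies, via the defining relation for $2$-cells in the slice, as a $2$-cell $\sigma_{(u,\phi)} : (p, 1_p) \Rightarrow (p'\circ u, \phi)$ in $C\!\downarrow\!c$. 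I would then check the lax transformation axioms: the unit axiom ($\sigma_{1_{(a,p)}} = 1$) is immediate, the composition axiom unpacks to the obvious equality $\sigma_{(u',\phi')\circ(u,\phi)} = (\phi'\circ 1_u)\cdot\phi$ following directly from the formula for $1$-cell composition in the slice, and the $2$-cell compatibility axiom reduces to the very condition $(1_{p'}\circ\alpha)\cdot\phi = \phi'$ defining $2$-cells of $C\!\downarrow\!c$. Then Fact \ref{fact1} yields $\tb 1 \tb \simeq \tb T \tb$, and since $\tb T \tb$ factors through a point, $\tb C\!\downarrow\!c\tb$ is contractible.

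For $c\!\downarrow\!C$ the argument is dual: $(c, 1_c)$ now behaves as a ``lax-initial'' object, and one constructs a lax transformation $T' \Rightarrow 1$, where $T'$ is constant at $(c, 1_c)$, with components $\sigma_{(a,p)} = (p, 1_p) : (c, 1_c) \to (a,p)$ and naturality $2$-cells again given by the $\phi$ data of a slice $1$-cell. The verification is entirely parallel, and Fact \ref{fact1} finishes the argument.

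The substance of the proof is the design of the two witnesses; the potential obstacle is simply bookkeeping the directions (lax versus oplax, source versus target of each naturality $2$-cell, and the identification $1_c \circ p = p$) carefully enough that the axioms become tautologies rather than genuine computations. Once oriented correctly, every axiom collapses to a unit law or a restatement of the defining conditions for cells of the Grothendieck construction on $C(-,c)$ or $C(c,-)$.
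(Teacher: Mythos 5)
Your proposal is correct and follows essentially the same route as the paper: the paper exhibits an oplax transformation from the constant $2$-functor at $1_c$ to the identity of $c\!\downarrow\!C$, with components $(p,1_p)$ and naturality $2$-cells given by the $\phi$ of each $1$-cell, and then invokes Fact \ref{fact1} exactly as you do. The only cosmetic differences are that you spell out the $C\!\downarrow\!c$ case (which the paper leaves as the dual) and label the transformation ``lax'' rather than ``oplax,'' which is immaterial since Fact \ref{fact1} covers both.
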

\begin{proof} Let $\mathrm{pt}\to c\!\downarrow \!C$,
$\mathrm{pt}\mapsto 1_c$, be the 2-functor from the terminal
2-category given by the object $1_c:c\to c$ of $c\!\downarrow \!C$.
Then, there is an oplax transformation from the constant composite
2-functor $c\!\downarrow \!C \to \mathrm{pt}\to c\!\downarrow \!C$
to the identity functor on $c\!\downarrow \!C$, whose component at
any object $p:c\to d$ is the 1-cell $(p,1_p):1_c\to p$, and whose
naturality component at a 1-cell $(u,\phi):p\to p'$ is $\phi$.  From
Fact  \ref{fact1}, it follows that the (constant) induced map on the
classifying space $\tb c\!\downarrow \!C\tb \to \tb \mathrm{pt}\tb =
\mathrm{pt}\to \tb c\!\downarrow \!C\tb$ is homotopic to the
identity map, whence the result.
\end{proof}

As an interesting application of Theorem \ref{rta}, we have the
2-categorical version of Quillen's Theorem A in ~\cite{Quillen1973}
below. First, let us borrow some terminology  from  Hirschhorn  in
~\cite[19.6.1]{Hirschhorn2009}:

\begin{itemize}
\item  a $2$-functor  $F:A\to C$ is called {\em homotopy left cofinal} if all the
homotopy-fibre 2-categories $F\!\downarrow \!c $, $c\in C$, are
weakly contractible\footnote{For $F$ a functor between small
categories, this condition is referred by  Cisinski  in
~\cite[3.3.3]{Cisinski2006} by saying that ``$F$ is
aspherical".},

\vspace{0.2cm}
\item  a $2$-functor  $F:A\to C$ is called {\em homotopy right cofinal} if all the
homotopy-fibre 2-categories $c\!\downarrow \!F$, $c\in C$, are
weakly contractible.
\end{itemize}

\begin{corollary}[~\cite{B-C2003} Theorem 1.2]\label{corta} Every homotopy left or right cofinal
$2$-functor between $2$-categories is a weak equivalence.
\end{corollary}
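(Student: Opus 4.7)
The plan is to deduce the corollary directly from the relative Quillen Theorem A (Theorem \ref{rta}) together with the contractibility of the slice 2-categories $C\!\downarrow\! c$ and $c\!\downarrow\! C$ (Lemma \ref{lcont}). The point is that ``homotopy left/right cofinal'' exactly matches what Theorem \ref{rta} needs when we compare $F$ against the identity.

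For the case where $F:A\to C$ is homotopy left cofinal, I would apply Theorem \ref{rta}$(i)$ to the commutative triangle
$$
\xymatrix@C=20pt@R=20pt{A\ar[rr]^{F}\ar[rd]_{F}&&C\ar[ld]^{1_{C}}\\ &C&}
$$
For each object $c\in C$, the induced 2-functor $\bar F:F\!\downarrow\! c\to 1_{C}\!\downarrow\! c=C\!\downarrow\! c$ is then a 2-functor between weakly contractible 2-categories: the source is contractible by the homotopy left cofinality hypothesis on $F$, and the target is contractible by Lemma \ref{lcont}. Hence $\bar F$ induces a homotopy equivalence on classifying spaces, so it is a weak equivalence. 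Theorem \ref{rta}$(i)$ then yields that $F:A\to C$ is a weak equivalence.

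The case of a homotopy right cofinal $F$ is symmetric: apply Theorem \ref{rta}$(ii)$ to the same triangle. For each $c\in C$, the induced 2-functor $\bar F:c\!\downarrow\! F\to c\!\downarrow\! 1_{C}=c\!\downarrow\! C$ has weakly contractible source (by hypothesis) and weakly contractible target (by Lemma \ref{lcont}), hence is a weak equivalence; Theorem \ref{rta}$(ii)$ concludes.

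There is no real obstacle here; the corollary is essentially a formal specialization of the relative statement to the case $H=1_{C}$, using that slice 2-categories are contractible. The only thing worth double-checking is that $1_{C}\!\downarrow\! c$ literally agrees with the slice $C\!\downarrow\! c$ (and dually for the under-slice) on the nose, which is clear from the definition of the homotopy-fibre 2-category given above.
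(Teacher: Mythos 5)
Your proof is correct and follows essentially the same route as the paper: specialize Theorem \ref{rta} to the triangle with $G=F$ and $H=1_C$, and observe that each $\bar F:F\!\downarrow\!c\to C\!\downarrow\!c$ (resp. $c\!\downarrow\!F\to c\!\downarrow\!C$) is a map between weakly contractible 2-categories by the cofinality hypothesis and Lemma \ref{lcont}, hence a weak equivalence. The only difference is that you spell out the contractibility argument that the paper leaves implicit.
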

\begin{proof} Let $F:A\to C$ be a left homotopy cofinal 2-functor.  Then, for any object $c\in C$, the induced functor $\bar{F}:F\!\downarrow \!c\to C\!\downarrow \!c$ is a weak equivalence and, therefore,  due to the particular case of Theorem
\ref{rta} where $H=1_C$, $F$ is a weak equivalence.
\end{proof}

Quillen's Theorem B in ~\cite{Quillen1973} has also been generalized for 2-functors between 2-categories as below. We
shall first set some terminology. Following Dwyer, Kan, and Smith in
~~\cite[\S 6]{DKS1989} and Barwick and Kan in ~~\cite{BK2013}, we say
that:
\begin{itemize}
\item   a $2$-functor  $F:A\to C$
   has the {\em property} B$_l$ if, for any $1$-cell $h:c\to c'$
  in $C$, the induced $2$-functor $h_*:F\!\downarrow\! c \to
  F\!\downarrow\! c'$ is a weak equivalence\footnote{For $F$ a
  functor between small categories, this condition is referred
by Cisinski in ~\cite[6.4.1]{Cisinski2006} by saying that ``the
functor $F$ is locally constant".},

\vspace{0.2cm}
\item   a $2$-functor  $F:A\to C$
   has the {\em property} B$_r$ if, for any $1$-cell $h:c\to c'$
  in $C$, the induced $2$-functor $h^*:c'\!\downarrow\! F \to
  c\!\downarrow\!F$ is a weak equivalence.
\end{itemize}

For any 2-functor  $F: A\to C$, each object $c\in C$ determines two pullback squares in ${\mathbf{2}\Cat}$
\begin{equation}\label{sqchac}\begin{array}{c}
\xymatrix{F\!\downarrow \!c\ar[r]^{\bar{F}}\ar[d]_{\pi}&C\!\downarrow \!c
\ar[d]^{\pi}\\ A\ar[r]^{F}&C
} \hspace{0.8cm} \xymatrix{c\!\downarrow \!F\ar[r]^{\bar{F}}\ar[d]_{\pi}&c\!\downarrow \!C
\ar[d]^{\pi}\\ A\ar[r]^{F}&C
}
\end{array}
\end{equation}
(where $\bar{F}$ is \eqref{fb} for $G=F$ and $H=T=1_C$), and the extension of
Quillen's Theorem B for 2-functors in ~\cite[Theorem 3.2]{Cegarra-2011} tells us that the following theorem holds.
\begin{theorem}\label{th3.2}
 A $2$-functor $F : A\to C$
 has the property $\mathrm{B}_l$ (resp. $\mathrm{B}_r$) if and only if the left (resp. right)
 square in \eqref{sqchac} is a
homotopy pullback for every object $c\in C$.
\end{theorem}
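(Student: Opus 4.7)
The proof naturally splits into two implications, of which the forward direction is straightforward while the converse will use the full strength of the Homotopy Colimit Theorem \ref{hct} together with the Homotopy Invariance Theorem \ref{ith}. I will describe only the $\mathrm{B}_l$ case, the $\mathrm{B}_r$ case being formally dual.

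For the ``if'' direction, I would start from the hypothesis that the left square in \eqref{sqchac} is a homotopy pullback for every $c\in C$. Any 1-cell $h:c\to c'$ in $C$ induces a morphism between the squares at $c$ and $c'$ that is the identity on $A\to C$ and is $h_*$ on both upper corners. By Lemma \ref{lcont}, $C\!\downarrow\!c$ and $C\!\downarrow\!c'$ are weakly contractible, so $h_*:C\!\downarrow\!c\to C\!\downarrow\!c'$ is a weak equivalence. The fact that homotopy pullbacks are invariant under levelwise weak equivalences of their input diagrams then forces $h_*:F\!\downarrow\!c\to F\!\downarrow\!c'$ to also be a weak equivalence, which is property $\mathrm{B}_l$.

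For the ``only if'' direction, the plan is to assume property $\mathrm{B}_l$, fix $c\in C$, and first replace $F:A\to C$ up to homotopy by the Grothendieck projection $\pi_C:\int_C(F\!\downarrow\!-)\to C$: Proposition \ref{globcov} supplies a weak equivalence $\Pi:\int_C(F\!\downarrow\!-)\to A$, and the oplax transformation $\pi_C\Rightarrow F\Pi$ with component $p:Fa\to c'$ at $(c',(a,p))$, combined with Fact \ref{fact1}, makes $\pi_C$ and $F\Pi$ homotopic on classifying spaces. It will therefore suffice to prove that the strict pullback square with $\int_C(F\!\downarrow\!-)$ in place of $A$ and $\pi_C$ in place of $F$ is a homotopy pullback. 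The key observation is that the strict pullback of $\pi_C$ along $\pi:C\!\downarrow\!c\to C$ is canonically the Grothendieck construction $\int_{C\!\downarrow\!c}\pi^*(F\!\downarrow\!-)$ on the pulled-back 2-diagram. Property $\mathrm{B}_l$ then supplies a 2-transformation $\pi^*(F\!\downarrow\!-)\Rightarrow \mathrm{const}_{F\!\downarrow\!c}$ on $C\!\downarrow\!c$ whose component at $(c',h:c'\to c)$ is the weak equivalence $h_*$; combining the Homotopy Invariance Theorem \ref{ith}, the Homotopy Colimit Theorem \ref{hct}, and the contractibility of $C\!\downarrow\!c$ (Lemma \ref{lcont} and Corollary \ref{cor1}) yields a natural weak equivalence $F\!\downarrow\!c\xrightarrow{\sim}\int_{C\!\downarrow\!c}\pi^*(F\!\downarrow\!-)$, implemented by $(a,p)\mapsto ((c,1_c),(a,p))$.

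The main obstacle will be to promote this weak equivalence of the pullback object to the assertion that the square itself is a genuine homotopy pullback in the Thomason model structure, since strict pullbacks of 2-functors are not in general homotopy pullbacks. My plan is to verify this via the homotopy-fibre characterization recalled at the end of Section \ref{1.1}: for each object of $\int_C(F\!\downarrow\!-)$ one produces a contractible resolution using slice 2-categories of the form $a\!\downarrow\!A$ (contractible by Lemma \ref{lcont}), together with a compatible choice of $P_a$ (one natural candidate being the 2-category $(F\pi)\!\downarrow\!c$, which is simultaneously the strict pullback for both the left and the composite square of the characterization), and checks that both squares are homotopy pullbacks by iteratively applying Fact \ref{fact1} and the Homotopy Invariance Theorem in situations where contractibility reduces the verification to a transparent form.
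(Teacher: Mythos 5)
The paper does not actually prove this statement: Theorem \ref{th3.2} is quoted from ~\cite[Theorem 3.2]{Cegarra-2011}, so there is no in-paper argument to compare yours with and your proposal must stand on its own. Your ``if'' direction is correct: the morphism of squares induced by $h:c\to c'$ is the identity on $A\to C$ and, by Lemma \ref{lcont}, a weak equivalence $h_*:C\!\downarrow\!c\to C\!\downarrow\!c'$ on the remaining corner of the cospan, so if both squares are homotopy pullbacks the induced $h_*:F\!\downarrow\!c\to F\!\downarrow\!c'$ on the fourth corners is a weak equivalence.

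The converse has two genuine gaps. First, the family $\{h_*:F\!\downarrow\!c'\to F\!\downarrow\!c\}$ indexed by the objects $(c',h)$ of $C\!\downarrow\!c$ is \emph{not} a $2$-transformation $\pi^*(F\!\downarrow\!-)\Rightarrow \mathrm{const}_{F\downarrow c}$: for a $1$-cell $(f,\phi):(c',h)\to(c'',h')$ one has $h'_*\circ f_*=(h'\circ f)_*$, which agrees with $h_*$ only up to the $2$-cell $\phi_*$, so the naturality squares commute only up to an (op)lax structure. Theorem \ref{ith} is stated for strict $2$-transformations (and the induced map $\Gamma_*$ on homotopy colimits is only constructed for those), so it cannot be applied as you propose without first proving a lax version of homotopy invariance. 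Second, and more seriously, your final step is precisely the hard content of any Theorem B: knowing that $F\!\downarrow\!c$ maps by a weak equivalence to the strict pullback $\int_{C\downarrow c}\pi^*(F\!\downarrow\!-)$ does not by itself make the square a homotopy pullback, and the homotopy-fibre characterization recalled in Section \ref{1.1} cannot bootstrap this, since it \emph{presupposes} that the left and composite squares of the chosen resolution are already known to be homotopy pullbacks. The results in the paper that would deliver such squares (Corollary \ref{hpdia}, Theorem \ref{qlemm}, Theorem \ref{tch2l}) are all derived downstream of Theorem \ref{th3.2}, so invoking them here is circular; what is missing is the genuine technical input, namely Quillen's detection lemma for bisimplicial sets ~\cite[Lemma in p. 14]{Quillen1973} or an equivalent realization--fibration argument applied to $\mathrm{hocolim}_C(F\!\downarrow\!-)\to \ner C$, and your closing paragraph (``iteratively applying Fact \ref{fact1} and the Homotopy Invariance Theorem'') does not supply it. A smaller unaddressed point: replacing $F$ by $\pi_C$ only makes the relevant triangle commute up to the oplax transformation with components $p:Fa\to c'$, and transporting the homotopy-pullback property of a specific square across such a replacement requires an explicit comparison of the two squares, not merely of the two maps.
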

Observe that every homotopy  left (resp. right) cofinal  2-functor
has the property $\mathrm{B}_l$ (resp. $\mathrm{B}_r$), and
therefore Corollary \ref{corta} is also a consequence of Theorem
\ref{th3.2}.

\section{Theorem B for 2-transformations}
In this section, we state and prove extensions of Theorem \ref{th3.2}
for 2-transformations between 2-diagrams of 2-categories.

For any 2-functor $\D:C\to \underline{\mathbf{2}\Cat}$ (or
$\D:C^{\mathrm{op}}\to \underline{\mathbf{2}\Cat}$) and any object $c\in C$,
let
$$\xymatrix{ \bar{c}:\D_{\!c}\longrightarrow \int_C\D}$$
denote the embedding 2-functor
$$
 \xymatrix @C=0.6pc{x \ar@/^0.7pc/[rr]^{ u}
\ar@/_0.7pc/[rr]_-{ u'}\ar@{}[rr]|{\Downarrow\! \alpha} &  & x'}\xymatrix{\ar@{|->}[r]&} \xymatrix @C=16pt{(c,x) \ar@/^1pc/[rr]^{ (1_c,u)} \ar@/_1pc/[rr]_-{(1_c,u')}
\ar@{}[rr]|{\Downarrow (1_{1_c},\alpha)}&  & (c,x').}
$$

If $\D,\,\E:C\to \underline{\mathbf{2}\Cat}$ are $2$-functors and
$\Gamma:\D\Rightarrow \E$ is a $2$-transformation, for any objects $c\in C$
and $y\in \E_c$, there is a canonical commutative square of
2-categories
\begin{equation}\label{tb2d}\begin{array}{l}\xymatrix{
\Gamma_{\!c}\!\downarrow\!y\ar[r]\ar[d]
& \E_c\!\downarrow\!y\ar[d]\\
\int_C\D\ar[r]^{\int_C\Gamma}
& \int_C\E,}
\end{array}\end{equation}
which, keeping the notations in squares
\eqref{sqchac},  is the composite of the squares
$$
\xymatrix{
\Gamma_{\!c}\!\downarrow\!y\ar[r]^{\overline{\Gamma}_{\!c}}\ar[d]_{\pi}
& \E_c\!\downarrow\!y\ar[d]^{\pi}\\
\D_c\ar[r]^{\Gamma_{\!c}}\ar[d]_{\bar{c}}&\E_c\ar[d]^{\bar{c}} \\
\int_C\D\ar[r]^{\int_C\Gamma}
& \int_C\E,}
$$
and we have the theorem below.

\begin{theorem}[Theorem $\mathrm{B}_l$ for 2-transformations] \label{tbgl}Let $\Gamma:\D\Rightarrow \E$ be a $2$-transformation, where
$\D,\,\E:C\to \underline{\mathbf{2}\Cat}$ are $2$-functors. The
following properties are equivalent:
\begin{enumerate}
\item[$(a)$] For any object $c$ in $C$ and any object $y$ in $\E_c$, the
 square \eqref{tb2d} is a homotopy pullback.

 \vspace{0.2cm}
\item[$(b)$] The $2$-functor $\int_C\Gamma:\int_C\D\to \int_C\E$ has
the property $\mathrm{B}_l$.

 \vspace{0.2cm}
\item[$(c)$] The two conditions below hold.
\begin{enumerate}
\item[$\mathrm{B1}_l$:] for
each object $c$ of $C$, the $2$-functor $\Gamma_c:\D_c\to
\E_c$ has the property $\mathrm{B}_l$.
 \vspace{0.1cm}

\item[$\mathrm{B2}_l$:] for each $1$-cell $h:c\to c'$ in $C$ and any object
$y\in \E_c$, the induced $2$-functor
$$\xymatrix@C=14pt{\overline{h}_*:\Gamma_{\!c}\!\downarrow\!y\ar[r]&
\Gamma_{\!c'}\!\downarrow\!h_*y}$$
is a weak equivalence.
\end{enumerate}
\item[$(d)$] The two conditions below hold.
\begin{enumerate}
\item[$\mathrm{B1}_l$:] for
each object $c$ of $C$, the $2$-functor $\Gamma_c:\D_c\to
\E_c$ has the property $\mathrm{B}_l$.

\vspace{0.1cm}
\item[$\mathrm{B2}'_l$:] for each $1$-cell $h:c\to c'$ in $C$, the square
\begin{equation}\label{sb'2}\begin{array}{l}\xymatrix{\D_{\!c}\ar[r]^{h_*}\ar[d]_{\Gamma_{\!c}}&\D_{\!c'}
\ar[d]^{\Gamma_{\!c'}}\\ \E_{\!c}\ar[r]^{h_*}& \E_{\!c'}}\end{array}\end{equation}
is a homotopy pullback.
\end{enumerate}
\end{enumerate}
\end{theorem}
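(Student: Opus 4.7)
The plan is to prove the equivalences via the cycle $(a)\Rightarrow(c)\Rightarrow(b)\Rightarrow(a)$ together with $(c)\Leftrightarrow(d)$, using Theorem \ref{th3.2} as the main engine and the unconditional identification $\int_C\Gamma\downarrow(c,y)\simeq\Gamma_c\downarrow y$ as the central technical tool.

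\emph{Step 1} ($(c)\Leftrightarrow(d)$): Under the common hypothesis $B1_l$, Theorem \ref{th3.2} applied to each $\Gamma_c$ identifies $\Gamma_c\downarrow y$ with the homotopy fibre of $\Gamma_c$ at $y$. The homotopy-fibre characterization of homotopy pullbacks from Section 2 then makes the square \eqref{sb'2} a homotopy pullback iff the induced map on homotopy fibres is a weak equivalence; under the identification, this induced map is exactly $\overline h_*$, so $B2'_l\Leftrightarrow B2_l$.

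\emph{Step 2} ($(a)\Rightarrow(c)$): Assuming \eqref{tb2d} is HP for every $(c,y)$, weak contractibility of $\E_c\downarrow y$ (Lemma \ref{lcont}) identifies $\Gamma_c\downarrow y$ with the homotopy fibre of $\int_C\Gamma$ at $(c,y)$. For $B1_l$: a 1-cell $k:y\to y'$ in $\E_c$ gives $(1_c,k):(c,y)\to(c,y')$ in $\int_C\E$, producing a morphism between two HP squares \eqref{tb2d} that is identity on the bottom row and a weak equivalence on the top right; functoriality of homotopy pullbacks forces the top-left map $k_*:\Gamma_c\downarrow y\to\Gamma_c\downarrow y'$ to be a weak equivalence, which is property $\mathrm{B}_l$ for $\Gamma_c$. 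For $B2_l$: the analogous argument applied to $(h,1_{h_*y}):(c,y)\to(c',h_*y)$ identifies the comparison on top-left corners with $\overline h_*$.

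\emph{Step 3} ($(c)\Rightarrow(b)\Rightarrow(a)$): The key technical lemma is that the natural inclusion $\iota:\Gamma_c\downarrow y\hookrightarrow\int_C\Gamma\downarrow(c,y)$ is \emph{unconditionally} a weak equivalence. I would prove this by exhibiting the retraction $r((c',x'),(h,u))=(h_*x',u)$ and constructing an explicit oplax transformation from the identity to $\iota r$ whose component at $((c',x'),(h,u))$ is the canonical 1-cell $((h,1_{h_*x'}),(1_h,1_u)):((c',x'),(h,u))\to((c,h_*x'),(1_c,u))$; Fact \ref{fact1} then gives $\iota r\simeq 1$, and since $r\iota=1$ both $r$ and $\iota$ are weak equivalences. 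Under this identification, the action of a 1-cell $(h,u):(c,y)\to(c',y')$ of $\int_C\E$ on homotopy slices becomes the composite $u_*\circ\overline h_*:\Gamma_c\downarrow y\to\Gamma_{c'}\downarrow y'$, a weak equivalence under $(c)$ by $B1_l$ and $B2_l$; Theorem \ref{th3.2} applied to $\int_C\Gamma$ then yields $(b)$. For $(b)\Rightarrow(a)$: Theorem \ref{th3.2} says the canonical square for $\int_C\Gamma$ at $(c,y)$ is HP; this square and \eqref{tb2d} share the same bottom row and have weakly contractible top-right corners (Lemma \ref{lcont}), so under the equivalence $\int_C\Gamma\downarrow(c,y)\simeq\Gamma_c\downarrow y$ the homotopy-fibre representation transports from $\int_C\Gamma\downarrow(c,y)$ to $\Gamma_c\downarrow y$, forcing \eqref{tb2d} to be HP.

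The main obstacle is constructing and verifying the oplax transformation in Step 3: the coherence of the component 1-cells under the 2-categorical composition in $\int_C\Gamma\downarrow(c,y)$, and in particular checking naturality up to 2-cells against arbitrary 1-cells of that 2-category, requires careful bookkeeping of the Grothendieck-construction data. Once the identification is in place, the remaining implications follow via standard homotopy-pullback manipulations and Theorem \ref{th3.2}.
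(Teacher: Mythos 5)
Your proposal is essentially correct and rests on the same central lemma as the paper: the unconditional weak equivalence between $\Gamma_{\!c}\!\downarrow\!y$ and $\int_C\!\Gamma\!\downarrow\!(c,y)$, established exactly as the paper does it (a retraction together with an oplax transformation to the identity, then Fact \ref{fact1}), and the same use of Theorem \ref{th3.2} plus the two-square decomposition of \eqref{tb2d} and the composite-square argument for $(c)\Leftrightarrow(d)$. The only genuinely different routing is your direct proof of $(a)\Rightarrow(c)$ in Step 2, where the paper instead goes $(a)\Leftrightarrow(b)$ and then $(b)\Leftrightarrow(c)$. That step contains an unflagged subtlety in the $\mathrm{B2}_l$ half: the 1-cell $(h,1_{h_*y}):(c,y)\to(c',h_*y)$ does \emph{not} induce a strictly commuting prism between the two squares \eqref{tb2d}, because $\bar{h}_*$ followed by $\Gamma_{\!c'}\!\downarrow\!h_*y\to\int_C\D$ sends $(x,v)$ to $(c',h_*x)$ while the other composite sends it to $(c,x)$; these agree only up to an oplax transformation (hence a homotopy on classifying spaces), so the ``functoriality of homotopy pullbacks'' you invoke needs the homotopy-coherent version of that statement. (By contrast, in the $\mathrm{B1}_l$ half the prism does commute strictly, since $k_*$ fixes the underlying object of $\D_c$, so that half is fine.) The gap is easily repaired with your own tools: the identification of the action of a 1-cell $(h,w)$ of $\int_C\E$ with $w_*\circ\bar{h}_*$ under the retraction $R$, which you already use for $(c)\Rightarrow(b)$, gives $(b)\Rightarrow(c)$ by specializing to $h=1_c$ and to $w=1_y$ --- this is precisely the paper's strictly commuting square --- and $(a)\Leftrightarrow(b)$ follows from your two-square decomposition. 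I would recommend reorganizing along those lines rather than patching the homotopy-coherence argument.
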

\begin{proof}
In order to prove the result, first we  show that, for any
object $c$ in $C$ and any object $y$ in $\E_c$,  there is a  weak
equivalence
\begin{equation}\label{eqr}
\xymatrix{R:\int_C\!\Gamma\! \downarrow\!(c,y) \,\to \,\Gamma_{\!c}\!\downarrow\!y,}
\end{equation}
where  $R$ is the 2-functor acting on cells of the 2-category
$\int_C\!\Gamma\! \downarrow\!(c,y)$ in the following way: On
objects $((a,x),(p,v))$, where $a$ is an object of $C$, $x$ an
object of $\D_a$, $p:a\to c$ is a 1-cell in $C$, and
$v:p_*\Gamma_{\!a}x=\Gamma_{\!c}p_*x\to y$ is a 1-cell in $\E_c$,
$$
R((a,x),(p,v))=(p_*x,v).
$$
On 1-cells
\begin{equation}\label{1-cell}((f,u),(\alpha,\psi)):((a,x),(p,v))\to
((a',x'),(p',v')),\end{equation} where $f:a\to a'$ is in $C$,
$u:f_*x\to x'$ in $\D_a$, $\alpha:p\Rightarrow p'\circ f$ in $C$,
and $\psi:v\Rightarrow v'\circ p'_*\Gamma_{\!a'}u\circ
\alpha_*\Gamma_{\!a}x$ in $\E_c$,
$$
R((f,u),(\alpha,\psi))=(p'_*u\circ \alpha_*x,\psi):(p_*x,v)\to (p'_*x',v').
$$
And, for a 2-cell $(\beta,\phi):((f,u),(\alpha,\psi))\Rightarrow
((f',u'),(\alpha',\psi'))$, where $\beta:f\Rightarrow f'$ is in $C$
and $\phi:u\Rightarrow u'\circ \beta_*x$ in $\D_a$, satisfying the
corresponding conditions,
$$ R(\beta,\phi)=p'_*\phi\circ
1_{\alpha_*x}:(p'_*u\circ \alpha_*x,\psi)\Rightarrow (p'_*u'\circ
\alpha'_*x,\psi').
$$

This 2-functor $R$ is actually a retraction, with a section given by the induced 2-functor on homotopy-fibre 2-categories
\begin{equation}\label{eql}
\xymatrix{\bar{c}:\Gamma_{\!c}\!\downarrow\!y \,\longrightarrow \,\int_C\!\Gamma\! \downarrow\!(c,y)}
\end{equation}
 making  the diagram below commutative:
$$
\xymatrix{\Gamma_{\!c}\!\downarrow\!y\ar[r]^{\pi}\ar[d]_{\bar{c}}&\D_c \ar[d]_{\bar{c}}\ar[r]^{\Gamma_c}&\E_c\ar[d]^{\bar{c}}\\
\int_C\!\Gamma\! \downarrow\!(c,y)\ar[r]^{\pi}&\int_C\D\ar[r]^{\int_C\Gamma}&\int_C\E.
}
$$
Explicitly, $\bar{c}$ in \eqref{eql} acts on objects $(x,v)$, where $x$ is
an object of $\D_c$ and $v:\Gamma_{\!c}x\to y$ is a 1-cell of
$\E_c$, by
$$
\bar{c}(x,v)=((c,x),(1_c,v)),
$$
on 1-cells $(u,\psi):(x,v)\to (x',v')$, where $u:x\to x'$ is in
$\D_c$ and $\psi:v\Rightarrow v'\circ \Gamma_{\!c}u$ in $\E_c$,  by
$$\bar{c}(u,\psi)=((1_c,u),(1_{1_c},\psi)):((c,x),(1_c,v))\to ((c,x'),(1_c,v'))
,$$ and, on a 2-cell $\phi:(u,\psi)\Rightarrow (u',\psi')$,
$$\bar{c}(\phi)=(1_{1_c},\phi):((1_c,u),(1_{1_c},\psi))\Rightarrow ((1_c,u'),(1_{1_c},\psi')).$$

It is plain to see that $R\,\bar{c}=1$. Furthermore, there is an
oplax transformation $1\Rightarrow \bar{c}\,R$ given, on any
object $((a,x),(p,v))$ of $\int_C\!\Gamma\! \downarrow\!(c,y)$, by the 1-cell
$$
((p,1_{p_*x}),(1_p,1_v)): ((a,x),(p,v))\to ((c,p_*x),(1_c,v)),
$$
and whose naturality component at any 1-cell as in \eqref{1-cell}
is
$$
\xymatrix@C=90pt{((a,x),(p,v))\ar[r]^{((f,u),(\alpha,\psi))}\ar[d]_{((p,1_{p_*x}),(1_p,1_v))}
\ar@{}@<90pt>[d]|(.4){(\alpha,1)}|(.55){\Rightarrow}&
((a',x'),(p',v'))\ar[d]^{((p',1_{p'_*x'}),(1_{p'},1_{v'}))}\\
((c,p_*x),(1_c,v))\ar[r]_{((1_c,p'_*u\circ \alpha_*x),(1_{1_c},\psi))}&((c,p¡_*x'),(1_c,v')).
}
$$
Hence, for the maps induced by $R$ and $\bar{c}$ on classifying
spaces, we have $\tb R\tb\,\tb \bar{c}\tb =1$ and, by Fact
\ref{fact1}, $1\simeq \tb \bar{c}\tb\,\tb R\tb$. Thus, it follows
that both 2-functors $R$ and $\bar{c}$ are weak equivalences.

Let us now observe that the square \eqref{tb2d} is the composite of
the squares
$$
\xymatrix{
\Gamma_{\!c}\!\downarrow\!y \ar[r]^{\overline{\Gamma}_{\!c}}\ar[d]_{\bar{c}}
& \E_c\!\downarrow\!y\ar[d]^{\bar{c}}\\
\int_C\!\Gamma\!\downarrow\!(c,y)
\ar@{}[rd]|{(I)}
\ar[r]^{\overline{\int_C\Gamma}}\ar[d]_{\pi}&\int_C\!\E\!\downarrow\!(c,y)
\ar[d]^{\pi} \\
\int_C\D\ar[r]_{\int_C\Gamma}
& \int_C\E.}
$$
Therefore, as both vertical 2-functors $\bar{c}$ are weak equivalences,
the square \eqref{tb2d} is a homotopy pullback if and only if the
square $(I)$ above is as well. Thus, by Theorem \ref{th3.2}, it
follows that $(a)\Leftrightarrow (b)$.

To prove $(b)\Leftrightarrow (c)$, let us observe that, for any
1-cell $(h,w):(c,y)\to (c',y')$ in $\int_C\E$, there is a commutative
diagram of 2-functors
$$
\xymatrix{\int_C\!\Gamma\!\downarrow\!(c,y)\ar[rr]^{\overline{(h,w)}_*}
\ar[d]_{R} && \int_C\!\Gamma\!\downarrow\!(c',y')\ar[d]^{R}\\
\Gamma_{\!c}\!\downarrow\!y\ar[r]^{\bar{h}_*}&\Gamma_{\!c'}\!\downarrow\!h_*y\ar[r]^{w_*}&
\Gamma_{\!c'}\!\downarrow\!y',
}
$$
where both vertical 2-functors $R$ are weak equivalences. If the
2-transformation $\Gamma$ has the properties $\mathrm{B1}_l$ and
$\mathrm{B2}_l$, then both 2-functors $\bar{h}_*$ and $w_*$ in the
bottom of the diagram above are weak equivalences, and therefore the
2-functor $\overline{(h,w)}_*$ at the top is also a weak
equivalence. That is, the 2-functor $\int_C\Gamma$ has the property
$\mathrm{B}_l$.

Conversely, assume that $\int_C\Gamma$ has the property $\mathrm{B}_l$. Then,
for any object $c$ of $C$ and any 1-cell $w:y\to y'$ of $\E_c$, the
above commutative square for the case where $h=1_c$ proves that the
2-functor $w_*:\Gamma_{\!c}\!\downarrow\!y\to
\Gamma_{\!c}\!\downarrow\!y'$ is a weak equivalence; that is,
$\Gamma_{\!c}:\D_c\to\E_c$ has the property $\mathrm{B}_l$. Similarly, the
commutativity of the above square for $w=1_y$ implies that, for
every 1-cell $h:c\to c'$ on $C$ and any object $y\in \E_c$, the
2-functor $\bar{h}_*:\Gamma_{\!c}\!\downarrow\!y\to
\Gamma_{\!c'}\!\downarrow\!h_*y$ is a weak equivalence.

Finally, the equivalence $(c)\Leftrightarrow(d)$ is consequence of
the homotopy fibre characterization of homotopy pullbacks of spaces
(hence of 2-categories, see Section \ref{1.1}): For any 1-cell
$h:c\to c'$ in $C$ and any object $y\in \E_c$, we have  the equality
of composite squares $(I)+(II)=(III)+(IV)$, where
$$
\xymatrix{\Gamma_{\!c}\!\downarrow\!y\ar[r]^{\pi}\ar[d]_{\bar{\Gamma}_{\!c}}\ar@{}[rd]|{(I)}&\D_{\!c}\ar@{}[rd]|{(II)}\ar[r]^{h_*}\ar[d]_{{\Gamma}_{\!c}}&
\D_{\!c'}\ar[d]^{{\Gamma}_{\!c'}}& \ar@{}[d]|{\textstyle =}& \Gamma_{\!c}\!\downarrow\!y\ar@{}[rd]|{(III)}\ar[r]^-{\bar{h}_*}\ar[d]_{\bar{\Gamma}_{\!c}}&\Gamma_{\!c'}\!\downarrow\!h_*y\ar@{}[rd]|{(IV)}\ar[r]^{\pi}\ar[d]^{\bar{\Gamma}_{\!c'}}&
\D_{\!c'}\ar[d]^{{\Gamma}_{\!c'}}\\
\E_{\!c}\!\downarrow\!y\ar[r]^{\pi}&\E_{\!c}\ar[r]^{h_*}&\E_{\!c'}&& \E_{\!c}\!\downarrow\!y\ar[r]^-{\bar{h}_*}&\E_{\!c'}\!\downarrow\!h_*y\ar[r]^{\pi}&\E_{\!c'}.}
$$

Under the hypothesis $\mathrm{B1}_l$, the squares $(I)$  and $(IV)$
are both homotopy pullbacks (where, recall, the comma 2-categories
$\E_{\!c}\!\downarrow\!y$ and $\E_{\!c'}\!\downarrow\!h_*y$ are
weakly contractible).  Then, as the square $(II)=\eqref{sb'2}$ is a
homotopy pullback if and only if, for any object $y\in \E_y$, the
square $(I)+(II)=(III)+(IV)$ is a homotopy pullback, we conclude
that the square $\eqref{sb'2}$ is a homotopy pullback if and only if
the square $(III)$ is as well, which holds if and only if the
2-functor $\bar{h}_*:\Gamma_{\!c}\!\downarrow\!y\to
\Gamma_{\!c'}\!\downarrow\!h_*y$ is a weak equivalence.\end{proof}

Similarly, if $\D,\,\E:C^{\mathrm{op}}\to \underline{\mathbf{2}\Cat}$ are
$2$-functors and $\Gamma:\D\Rightarrow \E$ is a $2$-transformation, for any
objects $c\in C$ and $y\in \E_{c}$, there is a commutative  square
\begin{equation}\label{tb2do}\begin{array}{l}\xymatrix{
y\!\downarrow\!\Gamma_{\!c}\ar[r]\ar[d]
& y\!\downarrow\!\E_c\ar[d]\\
\int_C\D\ar[r]^{\int_C\Gamma}
& \int_C\E,}
\end{array}\end{equation}
defined as the composite of the squares
$$
\xymatrix{
y\!\downarrow\!\Gamma_{\!c}\ar[r]^{\overline{\Gamma}_{\!c}}\ar[d]_{\pi}
& y\!\downarrow\!\E_c\ar[d]^{\pi}\\
\D_c\ar[r]^{\Gamma_{\!c}}\ar[d]_{\bar{c}}&\E_c\ar[d]^{\bar{c}} \\
\int_C\D\ar[r]^{\int_C\Gamma}
& \int_C\E,}
$$
and we have the theorem below.
\begin{theorem}[Theorem $\mathrm{B}_r$ for 2-transformations] \label{tbgo}
Let $\Gamma:\D\Rightarrow\E$ be a $2$-transformation, where
$\D,\,\E:C^{\mathrm{op}}\to \underline{\mathbf{2}\Cat}$ are $2$-functors. The
following properties are equivalent:
\begin{enumerate}
\item[$(a)$] For any object $c$ in $C$ and any object $y$ in $\E_c$, the
 square \eqref{tb2do} is a homotopy pullback.

  \vspace{0.2cm}
\item[$(b)$] The $2$-functor $\int_C\Gamma:\int_C\D\to \int_C\E$ has
the property $\mathrm{B}_r$.

 \vspace{0.2cm}
\item[$(c)$] The two conditions below hold.
\begin{enumerate}
\item[$\mathrm{B1}_r$:] for
each object $c$ of $C$, the $2$-functor $\Gamma_c:\D_c\to
\E_c$ has the property $\mathrm{B}_r$.

 \vspace{0.1cm}

\item[$\mathrm{B2}_r$:] for each $1$-cell $h:c\to c'$ in $C$ and any object
$y'\in \E_{c'}$, the induced $2$-functor
$$\xymatrix@C=14pt{\bar{h}^*:\, y'\!\downarrow\!\Gamma_{\!c'}\ar[r]&
h^*y'\!\downarrow\!\Gamma_{\!c}}$$
is a weak equivalence.
\end{enumerate}
\item[$(d)$] The two conditions below hold.
\begin{enumerate}
\item[$\mathrm{B1}_r$:] for
each object $c$ of $C$, the $2$-functor $\Gamma_c:\D_c\to
\E_c$ has the property $\mathrm{B}_r$.

\vspace{0.1cm}
\item[$\mathrm{B2}'_r$:] for each $1$-cell $h:c\to c'$ in $C$, the square
$$\xymatrix{\D_{\!c'}\ar[r]^{h_*}\ar[d]_{\Gamma_{\!c'}}&\D_{\!c}
\ar[d]^{\Gamma_{\!c}}\\ \E_{\!c'}\ar[r]^{h_*}&
\E_{\!c}}$$ is a homotopy pullback.
\end{enumerate}
\end{enumerate}
\end{theorem}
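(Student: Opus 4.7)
The plan is to mirror, step by step, the four-part argument just used for Theorem~\ref{tbgl}, with all variance suitably reversed. The key technical device is again the construction of a retraction whose section is the canonical embedding of the homotopy-fibre; once this is in place, the equivalences $(a)\Leftrightarrow(b)\Leftrightarrow(c)\Leftrightarrow(d)$ reduce to the same diagram-chasing and pasting arguments as in the covariant case.

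First I would exhibit, for every object $c\in C$ and $y\in\E_c$, a weak equivalence
$$
\xymatrix{R:(c,y)\!\downarrow\!\int_C\Gamma\;\longrightarrow\; y\!\downarrow\!\Gamma_c}
$$
sending an object $((a,x),(p,v))$ (with $p:c\to a$ a 1-cell of $C$ and $v:y\to \Gamma_c p^*x=p^*\Gamma_a x$ a 1-cell of $\E_c$) to $(p^*x,v)$, with the evident prescription on 1- and 2-cells obtained by dualising the formulas used in the proof of Theorem~\ref{tbgl}. This $R$ admits the embedding
$$
\xymatrix{\bar c:y\!\downarrow\!\Gamma_c\;\longrightarrow\;(c,y)\!\downarrow\!\int_C\Gamma,\qquad (x,v)\mapsto((c,x),(1_c,v))}
$$
as a section, so $R\,\bar c=1$. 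To see that $\bar c\,R$ is homotopic to the identity, I would write down an oplax transformation between them whose component at $((a,x),(p,v))$ is the evident 1-cell of $(c,y)\!\downarrow\!\int_C\Gamma$ built from $(p,1_{p^*x})$ and $(1_p,1_v)$, and then apply Fact~\ref{fact1}. This is the step that requires the most bookkeeping with the contravariant Grothendieck data, and is essentially the main obstacle; everything afterwards is formal.

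Next, I would factor the square \eqref{tb2do} as the vertical paste
$$
\xymatrix{
y\!\downarrow\!\Gamma_c\ar[r]^{\bar\Gamma_c}\ar[d]_{\bar c} & y\!\downarrow\!\E_c\ar[d]^{\bar c}\\
(c,y)\!\downarrow\!\int_C\Gamma\ar[r]\ar[d]_{\pi} & (c,y)\!\downarrow\!\int_C\E\ar[d]^{\pi}\\
\int_C\D\ar[r]_{\int_C\Gamma} & \int_C\E.
}
$$
Since the two upper vertical 2-functors $\bar c$ are weak equivalences by the previous step, \eqref{tb2do} is a homotopy pullback for every $(c,y)$ if and only if the lower square is, and by Theorem~\ref{th3.2} this is in turn equivalent to $\int_C\Gamma$ having the property $\mathrm{B}_r$. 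This gives $(a)\Leftrightarrow(b)$.

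For $(b)\Leftrightarrow(c)$, I would observe that each 1-cell $(h,w):(c,y)\to(c',y')$ in $\int_C\E$ (where $h:c\to c'$ in $C$ and $w:y\to h^*y'$ in $\E_c$) fits into a commutative diagram
$$
\xymatrix{(c',y')\!\downarrow\!\int_C\Gamma\ar[rr]^{(h,w)^*}\ar[d]_R & & (c,y)\!\downarrow\!\int_C\Gamma\ar[d]^R\\
y'\!\downarrow\!\Gamma_{c'}\ar[r]_{\bar h^*} & h^*y'\!\downarrow\!\Gamma_c\ar[r]_{w^*} & y\!\downarrow\!\Gamma_c,}
$$
whose vertical arrows are weak equivalences. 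Specialising to $h=1_c$ recovers $\mathrm{B1}_r$, specialising to $w=1_{h^*y'}$ recovers $\mathrm{B2}_r$, and conversely $\mathrm{B1}_r+\mathrm{B2}_r$ force $(h,w)^*$ to be a weak equivalence in general. Finally, $(c)\Leftrightarrow(d)$ is obtained exactly as in the covariant case: the homotopy-fibre characterisation of homotopy pullbacks together with the pasting identity $(I)+(II)=(III)+(IV)$ (with the obvious dualised squares built from $\bar\Gamma_{c'},\,\Gamma_{c'},\,h^*,\,\pi$ and the comma 2-categories $y\!\downarrow\!\E_{c'}$, $h^*y\!\downarrow\!\E_c$, which are weakly contractible) shows that under $\mathrm{B1}_r$ the square $\mathrm{B2}'_r$ is a homotopy pullback exactly when $\bar h^*:y'\!\downarrow\!\Gamma_{c'}\to h^*y'\!\downarrow\!\Gamma_c$ is a weak equivalence for every $y'\in\E_{c'}$.
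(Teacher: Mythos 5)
Your proposal is correct and follows essentially the same route as the paper, which itself simply declares the proof ``parallel to the proof of Theorem~\ref{tbgl}'' and only records the explicit formula for the retraction $R:(c,y)\!\downarrow\!\int_C\Gamma\to y\!\downarrow\!\Gamma_{\!c}$; your construction of $R$, its section $\bar{c}$, the oplax transformation comparing $\bar{c}\,R$ with the identity, and the subsequent formal reductions via Theorem~\ref{th3.2} all match the intended dualization. (Only a trivial notational slip: in the last step the weakly contractible comma 2-categories should be $y'\!\downarrow\!\E_{c'}$ and $h^*y'\!\downarrow\!\E_c$ for $y'\in\E_{c'}$.)
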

\begin{proof} This is parallel to the proof of Theorem \ref{tbgl} given above,
and we leave it to the reader. We simply note that, in this case, the
weak equivalence
\begin{equation}\label{eqro}
\xymatrix{R:(c,y)\! \downarrow\!\int_C\!\Gamma \,\to \,y\!\downarrow\!\Gamma_{\!c}},
\end{equation}
for each objects $c\in C$ and $y\in \E_c$, is defined as below.

 On objects $((x,a),(p,v))\in (c,y)\!
\downarrow\!\int_C\!\Gamma$, where $a$ is an object of $C$, $x$ an
object of $\D_a$, $p:c\to a$ is a 1-cell in $C$ and $v:y\to
p^*\Gamma_{\!a}x=\Gamma_{\!c}p^*x$ is a 1-cell in $\E_c$,
$$
R((a,x),(p,v))=(p^*x,v).
$$
On 1-cells $((f,u),(\alpha,\psi)):((a,x),(p,v))\to
((a',x'),(p',v'))$  where $f:a\to a'$ is in $\C$, $u:x\to f^*x'$ in
$\D_a$, $\alpha: f\circ p\Rightarrow\circ p'$ in $C$, and $\psi:
 \alpha^*\Gamma_{\!a'}x'\circ p^*\Gamma_{\!a}u\circ v \Rightarrow v'$ in
$\E_c$,
$$
R((f,u),(\alpha,\psi))=(\alpha^*x'\circ p^*u,\psi):(p^*x,v)\to (p'^*x',v'),
$$
and, for a 2-cell $(\beta,\phi):((f,u),(\alpha,\psi))\Rightarrow
((f',u'),(\alpha',\psi'))$, where $\beta:f\Rightarrow f'$ is in $C$
and $\phi:\beta^*x'\circ u\Rightarrow u'$ in $\D_a$,
$$ R(\beta,\phi)=
1_{\alpha'^*x'}\circ p^*\phi:(\alpha^*x'\circ p^*u,\psi)\Rightarrow (
\alpha'^*x'\circ p^*u',\psi').
$$
\end{proof}

Observe that, in the particular case where $C=\mathrm{pt}$ the terminal
2-category, Theorems \ref{tbgl} and \ref{tbgo} state exactly the same as Theorem
\ref{th3.2}.

Furthermore,  in the specific case where $\E=\mathrm{pt}$ is the constant terminal 2-category, for
any 2-functor $\D:C\to \underline{\mathbf{2}\Cat}$ or $\D:C^{\mathrm{op}}\to
\underline{\mathbf{2}\Cat}$, the projection 2-functors $\pi$ are actually
isomorphisms $\Gamma_{\!c}\!\downarrow\!\mathrm{pt}\cong
\D_{\!c}\cong \mathrm{pt}\!\downarrow\!\Gamma_{\!c}$, and Theorems
\ref{tbgl} and \ref{tbgo} give as a corollary the following
2-categorical version of the relevant Quillen's detection
principle for homotopy pullback diagrams ~\cite[Lemma in p.
14]{Quillen1973} (see ~\cite[Theorem 4.3]{CCH2014}  for a more
general bicategorical result). Let us also stress that  the weak
equivalences \eqref{eqr} and \eqref{eqro}, in this case where
$\E=\mathrm{pt}$, establish weak equivalences
$$
\xymatrix@C=16pt{\pi\!\downarrow\!c \ar[r]\ar@{}@<-2pt>[r]^{\sim}& \D_{\!c}&c\!\downarrow\!\pi \ar[l]\ar@{}@<2pt>[l]_{\sim}}
$$
between the homotopy-fibre 2-categories of the projection
$2$-functor $\pi:\int_C\D \to C$ over the objects of $C$ and the
2-categories attached by the 2-diagram to these objects.

\begin{corollary}[Detecting homotopy pullbacks]\label{hpdia} Let $C$ be a $2$-category.
For any $2$-functor $\D:C\to \underline{\mathbf{2}\Cat}$ (resp. $\D:C^{\mathrm{op}}\to \underline{\mathbf{2}\Cat}$), the
following statements are equivalent:
\begin{enumerate}
\item[$(a)$]
 for any object $c$ in $C$ the square
\begin{equation}\label{sqfibl} \begin{array}{l}
\xymatrix{\D_{\!c}\ar[r]\ar[r]\ar[d]_{\bar{c}} &\mathrm{pt}\ar[d]^{c}  \\
\int_C\D\ar[r]^{\pi}&C}  \end{array}
\end{equation}
is a homotopy pullback.

\vspace{0.2cm}
\item[$(b)$]  the projection $2$-functor $\pi:\int_C\D\to C$  has
the property $\mathrm{B}_l$ (resp. $\mathrm{B}_r$) .

\vspace{0.2cm}
\item[$(c)$]  for each $1$-cell $h:c\to c'$ of $C$, the $2$-functor
$h_*:\D_{\!c}\to \D_{\!c'}$ (resp. $h^*:\D_{\!c'}\to \D_{\!c}$)
is a weak equivalence.
\end{enumerate}
\end{corollary}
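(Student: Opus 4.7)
The plan is to obtain this corollary as the direct specialization of Theorem \ref{tbgl} (in the covariant case) and Theorem \ref{tbgo} (in the contravariant case) to the situation $\E=\mathrm{pt}$, with $\Gamma:\D\Rightarrow\mathrm{pt}$ the unique collapse 2-transformation. First I would spell out the relevant identifications: the Grothendieck construction on the constant terminal 2-functor is $\int_C\mathrm{pt}\cong C$, under which $\int_C\Gamma$ becomes the projection $\pi:\int_C\D\to C$; for the unique object $y\in\E_c=\mathrm{pt}$, the slice $\E_c\!\downarrow\!y$ is terminal; and, as already noted in the text preceding the statement, the projection $\pi:\Gamma_c\!\downarrow\!y\to\D_c$ (resp. $y\!\downarrow\!\Gamma_c\to\D_c$) is an isomorphism because there is a unique 1-cell $\Gamma_c x\to\mathrm{pt}$ for every $x\in\D_c$, and similarly on 2-cells. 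With these identifications, the canonical square \eqref{tb2d} of Theorem \ref{tbgl} (resp. \eqref{tb2do} of Theorem \ref{tbgo}) is exactly the square \eqref{sqfibl}, yielding the equivalence $(a)\Leftrightarrow(b)$.

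For the equivalence $(b)\Leftrightarrow(c)$, I would invoke the implication $(b)\Leftrightarrow(d)$ in the preceding theorems. Condition $\mathrm{B1}_l$ (resp. $\mathrm{B1}_r$) is automatic in our setting, since $\Gamma_c:\D_c\to\mathrm{pt}$ has the property $\mathrm{B}_l$ (resp. $\mathrm{B}_r$) trivially: the only 1-cell of $\mathrm{pt}$ is the identity, and the induced 2-functor on homotopy-fibres is the identity $\D_c\to\D_c$. Condition $\mathrm{B2}'_l$, when $\E=\mathrm{pt}$, reduces for each 1-cell $h:c\to c'$ to asking that
$$
\xymatrix{\D_{\!c}\ar[r]^{h_*}\ar[d] & \D_{\!c'}\ar[d]\\ \mathrm{pt}\ar[r] & \mathrm{pt}}
$$
be a homotopy pullback, which holds if and only if $h_*:\D_{\!c}\to\D_{\!c'}$ is a weak equivalence; and analogously $\mathrm{B2}'_r$ reduces to $h^*:\D_{\!c'}\to\D_{\!c}$ being a weak equivalence. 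This is exactly condition $(c)$.

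There is no real obstacle here: once the above identifications are in place, the result is a formal consequence of Theorems \ref{tbgl} and \ref{tbgo}. The only thing to verify with some care is the isomorphism $\Gamma_c\!\downarrow\!\mathrm{pt}\cong\D_c$ (and its contravariant analogue), but this is immediate from the definition of the comma 2-category, since all the data involving cells in $\E_c=\mathrm{pt}$ is forced to be identities and the remaining data is precisely that of $\D_c$.
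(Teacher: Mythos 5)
Your proposal is correct and follows exactly the route the paper intends: the corollary is obtained by specializing Theorems \ref{tbgl} and \ref{tbgo} to $\E=\mathrm{pt}$, using the identifications $\int_C\mathrm{pt}\cong C$, $\int_C\Gamma=\pi$, and $\Gamma_c\!\downarrow\!\mathrm{pt}\cong\D_c\cong\mathrm{pt}\!\downarrow\!\Gamma_c$ noted in the paragraph preceding the statement. Your reduction of $(b)\Leftrightarrow(c)$ via condition $(d)$ of those theorems, with $\mathrm{B1}$ trivially satisfied and $\mathrm{B2}'$ collapsing to the weak-equivalence condition on $h_*$ (resp. $h^*$), is exactly what the paper has in mind.
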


The following consequence of Theorems \ref{tbgl} and \ref{tbgo} is closely related to Theorem \ref{ith} and Corollary \ref{corta}.

\begin{corollary}[Theorem A for 2-diagrams] Let $\Gamma:\D\Rightarrow \E$ be a $2$-transformation, where
$\D,\,\E:C\to \underline{\mathbf{2}\Cat}$ (resp. $\D,\,\E:C^{\mathrm{op}}\to
\underline{\mathbf{2}\Cat}$) are $2$-functors. The following
statements are equivalent:
\begin{enumerate}
\item[$(a)$] the $2$-functor $\int_C\Gamma:\int_C\D\to \int_C\E$ is homotopy
left (resp. right) cofinal.

\vspace{0.2cm}

\item[$(b)$] for any object $c\in C$, the $2$-functor $\Gamma_{\!c}:\D_c\to
\E_c$ is homotopy left (resp. right)  cofinal.
\end{enumerate}
\end{corollary}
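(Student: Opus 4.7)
The plan is to reduce the statement to the fact, established inside the proofs of Theorems~\ref{tbgl} and \ref{tbgo}, that the homotopy-fibre 2-categories of $\int_C\Gamma$ are identified with those of the 2-functors $\Gamma_c$ up to weak equivalence. Concretely, for the covariant (left) case I would invoke the 2-functor
$$
R:\int_C\!\Gamma\!\downarrow\!(c,y)\ \longrightarrow\ \Gamma_{\!c}\!\downarrow\!y
$$
of \eqref{eqr}, which is shown in the proof of Theorem~\ref{tbgl} to be a weak equivalence for every $c\in\Ob C$ and every $y\in\Ob\E_c$; and for the contravariant (right) case, the dual weak equivalence \eqref{eqro},
$$
R:(c,y)\!\downarrow\!\int_C\!\Gamma\ \longrightarrow\ y\!\downarrow\!\Gamma_{\!c}.
$$

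Second, I would note that the objects of $\int_C\E$ are exactly the pairs $(c,y)$ with $c\in\Ob C$ and $y\in\Ob\E_c$, so that ``homotopy left cofinality of $\int_C\Gamma$'' unfolds to ``for every $c\in\Ob C$ and every $y\in\Ob\E_c$, the 2-category $\int_C\!\Gamma\!\downarrow\!(c,y)$ is weakly contractible.'' Since $R$ is a weak equivalence, this is equivalent to asking that each $\Gamma_{\!c}\!\downarrow\!y$ be weakly contractible, i.e.\ that each $\Gamma_{\!c}$ be homotopy left cofinal, which is precisely $(b)$. Both implications $(a)\Rightarrow(b)$ and $(b)\Rightarrow(a)$ fall out of this single equivalence, with no auxiliary argument needed.

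The right-cofinal case is strictly parallel: one substitutes $y\!\downarrow\!\Gamma_{\!c}$ and $(c,y)\!\downarrow\!\int_C\!\Gamma$ for $\Gamma_{\!c}\!\downarrow\!y$ and $\int_C\!\Gamma\!\downarrow\!(c,y)$, and uses \eqref{eqro} in place of \eqref{eqr}. There is no genuine obstacle here; the only point that requires a brief check is the bijective parametrization of $\Ob\int_C\E$ by the pairs $(c,y)$, which is immediate from the definition of the Grothendieck construction. The whole argument is therefore essentially a two-line corollary of the weak equivalences $R$ produced in the proofs of Theorems~\ref{tbgl} and~\ref{tbgo}.
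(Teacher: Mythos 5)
Your proposal is correct and matches the paper's intent: the paper states this corollary without an explicit proof, presenting it as a consequence of Theorems~\ref{tbgl} and~\ref{tbgo}, and the natural derivation is exactly the one you give, namely that the weak equivalences $R:\int_C\!\Gamma\!\downarrow\!(c,y)\to\Gamma_{\!c}\!\downarrow\!y$ of \eqref{eqr} (resp.\ \eqref{eqro}) identify the homotopy fibres of $\int_C\Gamma$ over the objects $(c,y)$ of $\int_C\E$ with the homotopy fibres of the $\Gamma_c$, so weak contractibility of one family is equivalent to that of the other. No gaps.
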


\section{Changing the indexing 2-category}

If $F:A\to C$ is a 2-functor between 2-categories, then any
2-functor $\D:C\to \underline{\mathbf{2}\Cat}$, or
$\D:C^{\mathrm{op}}\to \underline{\mathbf{2}\Cat}$, gives rise to a
pullback of 2-categories
\begin{equation}\label{sqchb}\begin{array}{c}
\xymatrix{\int_AF^*\D\ar[r]^{\bar{F}}\ar[d]_{\pi}&\int_C\D\ar[d]^{\pi}\\
A\ar[r]^{F}&C
}
\end{array}
\end{equation}
where $\bar{F}$ is given by
\begin{equation}\label{ilf}
\bar{F}:\xymatrix@C=30pt{(a,x)\ar@{}[r]|{\Downarrow (\alpha,\phi)}\ar@/^1pc/[r]^{(f,u)}\ar@/_1pc/[r]_{(g,v)}&(a',x')
} \mapsto  \xymatrix@C=30pt{(Fa,x)
\ar@{}[r]|{\Downarrow (F\alpha,\phi)}\ar@/^1pc/[r]^{(Ff,u)}\ar@/_1pc/[r]_{(Fg,v)}&(Fa',x').
 }
\end{equation}

Our first result here completes Corollary \ref{hpdia}:

\begin{theorem}\label{qlemm} Let $C$ be a $2$-category and  $\D:C\to
\underline{\mathbf{2}\Cat}$ (resp. $\D:C^{\mathrm{op}}\to
\underline{\mathbf{2}\Cat}$) a $2$-functor. The following statements
are equivalent:

\begin{enumerate}
\item[$(a)$] For any $2$-functor $F:A\to C$, the  square \eqref{sqchb} is a
homotopy pullback.

 \vspace{0.2cm}
\item[$(b)$] For any 1-cell $h:c\to c'$ in $C$, the
$2$-functor $h_*:\D_{\!c}\to \D_{\!c'}$ (resp. $h^*:\D_{\!c'}\to
\D_{\!c}$) is a weak equivalence.
\end{enumerate}
\end{theorem}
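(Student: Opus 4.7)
For $(a)\Rightarrow(b)$, fix an object $c\in C$ and let $F_c:\mathrm{pt}\to C$ be the 2-functor selecting $c$. Directly from the definition of the Grothendieck construction, there is a natural isomorphism $\int_{\mathrm{pt}}F_c^*\D\cong \D_c$ under which the pullback square \eqref{sqchb} for $F=F_c$ becomes the square \eqref{sqfibl} for $\D$ at $c$. Hypothesis (a) therefore makes \eqref{sqfibl} a homotopy pullback for every $c\in C$, and the equivalence (a)$\Leftrightarrow$(c) in Corollary \ref{hpdia} immediately gives (b).

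For $(b)\Rightarrow(a)$, fix an arbitrary 2-functor $F:A\to C$. I shall verify that \eqref{sqchb} is a homotopy pullback by appealing to the homotopy-fibre characterization recalled in Section \ref{1.1}. For each object $a\in A$ take $A_a=\mathrm{pt}$ and let $F_a=a:\mathrm{pt}\to A$ be the 2-functor selecting $a$. Since $\tb\mathrm{pt}\tb$ is contractible and trivially $a\in\mathrm{Im}(F_a)$, it is enough to exhibit, for every $a\in A$, a commutative diagram
$$\xymatrix@R=18pt@C=26pt{\D_{Fa}\ar[r]\ar[d]&\int_AF^*\D\ar[r]^{\bar{F}}\ar[d]_{\pi}&\int_C\D\ar[d]^{\pi}\\ \mathrm{pt}\ar[r]^{a}&A\ar[r]^{F}&C}$$
in which both the left square and the composite outer square are homotopy pullbacks.

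The composite outer square is the square \eqref{sqfibl} associated to $\D$ at the object $Fa\in C$, and it is a homotopy pullback because hypothesis (b), via Corollary \ref{hpdia}, says that $\pi:\int_C\D\to C$ detects homotopy pullbacks over every object. For the left square, consider the composite $F^*\D=\D\,F:A\to\underline{\mathbf{2}\Cat}$: every 1-cell $g:a\to a'$ of $A$ is sent to $(F^*\D)_g=(Fg)_*$, which is a weak equivalence by (b). Hence $F^*\D$ also satisfies condition (c) of Corollary \ref{hpdia}, and a second application of that corollary to $F^*\D$ makes the left square a homotopy pullback. The contravariant case is entirely symmetric.

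I do not expect any genuine obstacle: the deep content has already been packaged into Corollary \ref{hpdia}, and the argument is essentially a clean instance of square-pasting together with the homotopy-fibre detection principle. The only point requiring explicit verification is the natural identification $\int_{\mathrm{pt}}F_c^*\D\cong\D_c$ used in $(a)\Rightarrow(b)$, which is immediate from the definition of the Grothendieck construction.
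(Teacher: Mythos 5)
Your proof is correct and follows essentially the same route as the paper's: both directions reduce to Corollary \ref{hpdia} via the identification $\int_{\mathrm{pt}}c^*\D\cong\D_c$, and the converse uses the homotopy-fibre characterization with the observation that $F^*\D$ inherits hypothesis $(b)$. No substantive differences.
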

\begin{proof} Suppose $(a)$ holds. Let
$c:\mathrm{ pt}\to C$ be the 2-functor given by any object $c\in C$. As we have quite an obvious isomorphism
$\int_{\mathrm{pt}}c^*\D\cong \D_{\!c}$, the square
\begin{equation}\label{rrsq}\begin{array}{c}
\xymatrix{\D_{\!c}\ar[r]^{\bar{c}}\ar[d]&\int_C\D\ar[d]^{\pi}\\
\mathrm{pt}\ar[r]^{c}&C
}\end{array}
\end{equation}
is, by hypothesis, a homotopy pullback. Hence, the result follows
from Corollary \ref{hpdia}.

Conversely, assume $(b)$ holds. Again by Corollary \ref{hpdia},
for any object $c\in C$, the square \eqref{rrsq} above is a homotopy
pullback.  Since, for any given 2-functor $F:A\to C$, the 2-functor
$F^*\!\D:A\to \underline{\mathbf{2}\Cat}$ (resp.
$F^*\!\D:A^{\mathrm{op}}\to \underline{\mathbf{2}\Cat}$) trivially
is under the same hypothesis $(b)$ as $\D$, it follows that,
for any object $a\in A$, both the left side and the composite square
in the diagram
$$
\xymatrix{\D_{F\!a}\ar[r]^-{\bar{a}}\ar[d]&\int_AF^*\!\D\ar[d]^{\pi}\ar[r]^{\bar{F}}
&\int_C\D\ar[d]^{\pi}\\
\mathrm{ pt}\ar[r]^{a}&A\ar[r]^{F}&C}
$$
are homotopy pullbacks. Then, from  the homotopy fibre
characterization of homotopy pullbacks, it follows that the right
side square above is a homotopy pullback, as required.
\end{proof}
Next, we state the complementary counterpart to the theorem above.
If $F:A\to C$ and
 $\D:C^{\mathrm{op}}\to \underline{\mathbf{2}\Cat}$ are 2-functors, for any
 objects $c\in C$ and $z\in \D_{\!c}$, let
 $$
\xymatrix{j_z:\, F\!\downarrow\!c\to \int_AF^*\D}
 $$
be the 2-functor defined on cells by
\begin{equation}\label{jz}
 \xymatrix@C=10pt{(a,p) \ar@/^0.8pc/[rr]^{(u,\phi)}
\ar@/_0.8pc/[rr]_-{(u',\phi')}\ar@{}[rr]|{\Downarrow\! \alpha} &  &
(a',p')}\xymatrix@C=12pt{\ar@{|->}[r]^{j_z}&} \xymatrix@C=16pt{(a,p^*z)
 \ar@/^1pc/[rr]^{
(u,\phi^*z)} \ar@/_1pc/[rr]_-{(u',\phi'^*z)} \ar@{}[rr]|{\Downarrow
(\alpha,1_{\phi'^*\!z})}&  & (a',p'^*z).}
\end{equation}

\begin{theorem}\label{tch2l}

For a $2$-functor $F:A\to C$, the following statements are
equivalent:
\begin{enumerate}
\item[$(a)$] $F:A\to C$ has the property $\mathrm{B}_l$.

\vspace{0.2cm}
\item[$(b)$] For any $2$-functor $\D:C^{\mathrm{op}}\to \underline{\mathbf{2}\Cat}$, the $2$-functor
$\bar{F}:\int_AF^*\D \to \int_C\D$  has the property
$\mathrm{B}_l$.

\vspace{0.2cm}
\item[$(c)$] For any $2$-functor $\D:C^{\mathrm{op}}\to
\underline{\mathbf{2}\Cat}$, and any objects $c\in C$ and $z\in
\D_{\!c}$, the commutative square
\begin{equation}\label{jysq}\begin{array}{c}
\xymatrix{F\!\downarrow\!c\ar[r]^{\bar{F}}\ar[d]_{j_z}&C\!\downarrow\!c\ar[d]^{j_z}\\
\int_AF^*\D\ar[r]^{\bar{F}}&\int_C\D
}
\end{array}
\end{equation}
is a homotopy pullback.

\vspace{0.2cm}
\item[$(d)$] For any $2$-functor $\D:C^{\mathrm{op}}\to
\underline{\mathbf{2}\Cat}$, the square \eqref{sqchb}
$$
\xymatrix{\int_AF^*\D\ar[r]^{\bar{F}}\ar[d]_{\pi}&\int_C\D\ar[d]^{\pi}\\
A\ar[r]^{F}&C}
$$
is a homotopy pullback.
\end{enumerate}
\end{theorem}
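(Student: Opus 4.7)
The crux of the proof is a \emph{comparison lemma}: for every $c\in C$ and $z\in\D_c$, the forgetful $2$-functor
\[
\rho_z\colon \bar{F}\!\downarrow\!(c,z)\longrightarrow F\!\downarrow\!c,\qquad ((a,x),(p,u))\mapsto (a,p),
\]
is a weak equivalence. The argument will mirror that of Proposition~\ref{globcov}: one exhibits a strict section $s_z\colon F\!\downarrow\!c\to \bar{F}\!\downarrow\!(c,z)$ sending $(a,p)\mapsto ((a,p^*z),(p,1_{p^*z}))$, and constructs an oplax transformation $s_z\rho_z\Rightarrow 1_{\bar{F}\downarrow(c,z)}$ whose component at $((a,x),(p,u))$ is built canonically from the $1$-cell $u\colon x\to p^*z$ in $\D_{Fa}$. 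Fact~\ref{fact1} then yields $\tb s_z\tb\,\tb\rho_z\tb\simeq 1$ (while $\tb\rho_z\tb\,\tb s_z\tb=1$ strictly), so $\rho_z$ is a weak equivalence.

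With the comparison lemma in hand, $(a)\Leftrightarrow(b)$ is immediate: for each $1$-cell $(h,w)\colon (c,z)\to (c',z')$ of $\int_C\D$, the square of $2$-functors
\[
\xymatrix{\bar{F}\!\downarrow\!(c,z)\ar[r]^{(h,w)_*}\ar[d]_{\rho_z}& \bar{F}\!\downarrow\!(c',z')\ar[d]^{\rho_{z'}}\\ F\!\downarrow\!c\ar[r]^{h_*}& F\!\downarrow\!c'}
\]
commutes, so by two-out-of-three $h_*$ is a weak equivalence iff $(h,w)_*$ is. Next, $(b)\Leftrightarrow(c)$ follows from Theorem~\ref{th3.2} applied to $\bar{F}$: property $\mathrm{B}_l$ for $\bar{F}$ is equivalent to the left square of \eqref{sqchac} (built from $\bar{F}$) being a homotopy pullback at every $(c,z)\in\int_C\D$, and the comparison lemma applied to both vertical columns (to $F$ on the left and to $1_C$ on the right, where $\int_C\D\!\downarrow\!(c,z)\simeq C\!\downarrow\!c$) translates this square precisely into \eqref{jysq}.

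The remaining implications close the loop. For $(d)\Rightarrow(a)$, I specialise $\D$ to the representable $2$-functor $C(-,c_0)\colon C^{\mathrm{op}}\to\underline{\Cat}$ for an object $c_0\in C$: then $\int_AF^*\D=F\!\downarrow\!c_0$ and $\int_C\D=C\!\downarrow\!c_0$, so \eqref{sqchb} becomes exactly the square \eqref{sqchac}, whose being a homotopy pullback for every $c_0$ characterises $F$ having $\mathrm{B}_l$ by Theorem~\ref{th3.2}. Finally, $(b)\Rightarrow(d)$: under $(b)$ we already have $(a)$, so Theorem~\ref{th3.2} realises the homotopy fibres of $\bar{F}$ at $(c,z)$ and of $F$ at $c$ by $\bar{F}\!\downarrow\!(c,z)$ and $F\!\downarrow\!c$ respectively, and the comparison map between these homotopy fibres induced by the commutative square \eqref{sqchb} is precisely $\rho_z$, which is a weak equivalence by the comparison lemma; the homotopy-fibre characterisation of homotopy pullbacks (cf.\ Section~\ref{1.1}) then yields that \eqref{sqchb} is a homotopy pullback. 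I anticipate the subtlest point to be exactly this last step: one must verify that the natural identifications of homotopy fibres supplied by Theorem~\ref{th3.2} align with the horizontal arrows of \eqref{sqchb} so that the induced map on homotopy fibres is really $\rho_z$ (and not merely weakly equivalent to it through an uncontrolled intermediary).
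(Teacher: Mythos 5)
Your argument is essentially the paper's. Your comparison lemma is exactly the paper's weak equivalence $\bar{\pi}\colon \bar{F}\!\downarrow\!(c,z)\to F\!\downarrow\!c$, established by the same retraction--section--oplax-transformation device via Fact~\ref{fact1} (with the same section $(a,p)\mapsto ((a,p^*z),(p,1_{p^*z}))$), and your proofs of $(a)\Leftrightarrow(b)$, $(b)\Leftrightarrow(c)$ and $(d)\Rightarrow(a)$ coincide with the paper's. The single divergence is how you reach $(d)$: the paper proves $(c)\Rightarrow(d)$ by pasting \eqref{jysq} with the left square of \eqref{sqchac} (a homotopy pullback by Theorem~\ref{th3.2}, since $(a)$ already holds) and invoking the homotopy-fibre characterization of Section~\ref{1.1} with the weakly contractible $C\!\downarrow\!c$ of Lemma~\ref{lcont} as the pointing 2-category; you instead argue $(b)\Rightarrow(d)$ by comparing the fibres $\bar{F}\!\downarrow\!(c,z)$ and $F\!\downarrow\!c$ through $\rho_z$. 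The ``subtlest point'' you flag is genuine but fillable: to meet the characterization of Section~\ref{1.1} you must supply a weakly contractible pointing 2-category and two homotopy-pullback squares over each $(c,z)$ --- e.g.\ take $\int_C\D\!\downarrow\!(c,z)$, use Theorem~\ref{th3.2} for $\bar{F}$ (hypothesis $(b)$) for the left square, and factor the composite square through the square whose horizontals are $\rho_z$ and its analogue for $1_C$ (both weak equivalences, hence a homotopy pullback) followed by the \eqref{sqchac}-square for $F$. Written out, this is the paper's $j_z$ argument with one extra weak equivalence interposed, so neither route buys anything over the other. One last nit: in $(b)\Rightarrow(a)$ you should name the $\D$ you feed into $(b)$ (any $\D$ with every $\D_c$ nonempty, e.g.\ the constant terminal diagram, for which $\bar F$ identifies with $F$), since otherwise a 1-cell $h$ of $C$ need not lift to a 1-cell of $\int_C\D$.
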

\begin{proof} For any objects $c\in C$ and $z\in \D_{\!c}$, let
\begin{equation}\label{eqbp}
\xymatrix{\bar{\pi}:\bar{F}\!\downarrow\!(c,z) \,\longrightarrow \,F\!\downarrow\!c}
\end{equation}
 be the induced 2-functor on homotopy-fibre 2-categories making the diagram below commutative:
$$
\xymatrix{\bar{F}\!\downarrow\!(c,z)\ar[r]^{\pi}\ar[d]_{\bar{\pi}}&\int_AF^*\D
\ar[d]_{\pi}\ar[r]^{\bar{F}}&\int_C\D\ar[d]^{\pi}\\
F\!\downarrow\!c\ar[r]^{\pi}&A\ar[r]^{F}&C.
}
$$
Explicitly, this $\bar{\pi}$ acts on objects $((a,x),(p,v))$, where
$a$ is an object of $A$, $x$ an object of $\D_{F\!a}$, $p:Fa\to c$
is a 1-cell in $C$ and $v:x\to p^*z$ is a 1-cell in $\D_{Fa}$, by
$$
\bar{\pi}((a,x),(p,v))=(a,p).
$$
On 1-cells
\begin{equation}\label{1-cell3}((f,u),(\phi,\beta)):((a,x),(p,v))\to
((a',x'),(p',v')),\end{equation} where $f:a\to a'$ is a 1-cell of
$A$, $u:x\to (Ff)^*x'$ of $\D_{F\!a}$, $\phi:p\Rightarrow p'\circ
Ff$ a 2-cell of $C$, and $\beta:\phi^*z\circ v\Rightarrow
(Ff)^*v'\circ u$ a 2-cell of $\D_{F\!a}$,
$$
\bar{\pi}((f,u),(\phi,\beta))=(f,\phi):(a,p)\to (a',p'),
$$
and, on a 2-cell $(\alpha,\psi):((f,u),(\phi,\beta))\Rightarrow
((f',u'),(\phi',\beta'))$, where $\alpha:f\Rightarrow f'$ is a
2-cell of $A$ and $\psi:(F\alpha)^*x'\circ u\Rightarrow u'$ a 2-cell
in  $\D_{Fa}$, satisfying the corresponding conditions,
$$ \bar{\pi}(\alpha,\psi)=\alpha:(f,\phi)\Rightarrow (f',\phi').
$$

The 2-functor $\bar{\pi}$ is actually a retraction, with a section given by 
the 2-functor
 $$
\xymatrix{i_z:\, F\!\downarrow\!c\to \bar{F}\!\downarrow\!(c,z),}
 $$
which acts on  objects by
$$
i_z(a,p)=((a,p^*z),(p,1_{p^*z})),
$$
on a 1-cell $(f,\phi):(a,p)\to (a',p')$ by
$$
i_z(f,\phi)=((f,\phi^*z),(\phi,1_{\phi^*z})):((a,p^*z),(p,1_{p^*z})) \to
((a',p'^*z),(p',1_{p'^*z})),
$$
and on a 2-cell $\alpha:(f,\phi)\Rightarrow (f',\phi')$ by
$$
i_z(\alpha)=(\alpha,1_{\phi^*z}):((f,\phi^*z),(\phi,1_{\phi^*z}))\Longrightarrow
((f',\phi'^*z),(\phi',1_{\phi'^*z})).
$$

It is clear that $\bar{\pi}\,i_z=1$. Furthermore, there is an oplax
transformation $1\Rightarrow i_z\,\bar{\pi}$ given, on any
object $((a,x),(p,v))$ of $\bar{F}\!\downarrow\!(c,z)$, by  the 1-cell
$$
((1_a,v),(1_p,1_v)): ((a,x),(p,v))\to ((a,p_*z),(p,1_{p^*z})),
$$
and whose naturality component  at any 1-cell as in
\eqref{1-cell3} is
$$
\xymatrix@C=90pt{((a,x),(p,v))\ar[r]^{((f,u),(\phi,\beta))}
\ar[d]_{((1_a,v),(1_p,1_v))}
\ar@{}@<90pt>[d]|(.4){(1_f,\beta)}|(.55){\Rightarrow}&
((a',x'),(p',v'))\ar[d]^{((1_{a'},v'),(1_{p'},1_{v'}))}\\
((a,p_*z),(p,1_{p^*z}))\ar[r]_{((f,\phi^*z),(\phi,1_{\phi^*\!z}))}&
((a',p'^*z),(p',1_{p'^*z})).
}
$$

Hence, for the maps induced by $\bar{\pi}$ and $i_z$ on classifying
spaces,  $\tb \bar{\pi}\tb\,\tb i_z\tb =1$ and, by Fact
\ref{fact1}, $1\simeq \tb i_z\tb\,\tb \bar{\pi}\tb$. Therefore,
 both 2-functors $\bar{\pi}$ and $i_z$ are weak
equivalences.

Let us now observe that the square \eqref{jysq} is the composite of
the squares
$$
\xymatrix{F\!\downarrow\!c \ar[r]^{\bar{F}}\ar[d]_{i_z}
& C\!\downarrow\!c\ar[d]^{i_z}\\
\bar{F}\!\downarrow\!(c,z)
\ar@{}[rd]|{(I)}
\ar[r]^{\bar{\bar{F}}}\ar[d]_{\pi}&\int_C\!\D\!\downarrow\!(c,z)
\ar[d]^{\pi} \\
\int_AF^*\D\ar[r]^{\bar{F}}
& \int_C\D,}
$$
where both vertical 2-functors $i_z$ are weak equivalences. It
follows that the square \eqref{jysq}  is a homotopy pullback if and
only if the square $(I)$ above is as well. As, by Theorem
\ref{th3.2}, the squares $(I)$ are homotopy pullbacks, for all
objects $(c,z)$ of $\int_C\D$, if and only if the 2-functor
$\bar{F}:\int_AF^*\D\to \int_C\D$ has the property $\mathrm{B}_l$,
the equivalence $(b)\Leftrightarrow (c)$ is proven.

The equivalence $(a)\Leftrightarrow (b)$ follows from the fact that,
for any 1-cell $(h,w):(c,z)\to (c',z')$ in $\int_C\D$, the square of
2-functors
$$
\xymatrix{\bar{F}\!\downarrow\!(c,z)\ar[d]_{\bar{\pi}}
\ar[r]^{\overline{(h,w)}_*}&\bar{F}\!\downarrow\!(c',z')\ar[d]^{\bar{\pi}}\\
F\!\downarrow\!c\ar[r]^{\bar{h}_*}&F\!\downarrow\!c'
}
$$
commutes, where, recall, both vertical 2-functors $\bar{\pi}$ are weak
equivalences. So, the 2-functors $\overline{(h,w)}_*$ at the top
are weak equivalences if and only if the 2-functors $\bar{h}_*$ at
the bottom are as well. This  directly means that $(a)\Rightarrow (b)$, and the
converse follows from taking any $\D$ such that $\D_c\neq \emptyset$ for all $c\in C$.

Next, we prove that $(c)\Rightarrow (d)$: For any object $(c,z)$ of
$\int_C\D$, we have the  squares
$$
\xymatrix{{F}\!\downarrow\!c\ar[r]^{j_z}
\ar[d]_{{\bar{F}}}&
\int_A\!F^*\D\ar[d]_{\bar{{F}}}\ar[r]^-{\pi}&
A\ar[d]^{F}\\
C\!\downarrow\!c\ar[r]^{j_z}&\int_C\D\ar[r]^{\pi}&
C,}
$$
whose composite is the left square in \eqref{sqchac}, and where
$j_z(1_c)=(c,z)$. By hypothesis, the left square is a homotopy
pullback. Furthermore, as $F$ has the property $\mathrm{B}_l$, owing
to the already proven implication $(c)\Rightarrow (a)$, Theorem
\ref{th3.2} implies that the composite square is also a homotopy
pullback. Therefore, by the homotopy fibre characterization, the right
square above is a homotopy pullback as well.

Finally,  $(d)\Rightarrow (a)$ is easy: For any object $c\in C$ take
$\D=C(-,c):C^{\mathrm{op}}\to \underline{\Cat}\subseteq
\underline{\mathbf{2}\Cat}$. Then, by hypothesis, the square
$$\xymatrix{\int_AF^*C(-,c)\ar[r]\ar[d]&\int_CC(-,c)\ar[d]&
\ar@{}@<-28pt>[d]|{\textstyle =}
 F\!\downarrow\!c\ar[r]\ar[d]& C\!\downarrow\!c\ar[d] \\
A\ar[r]^F&C&A\ar[r]^F&C }
$$
is a homotopy pullback, whence the result follows from Theorem
\ref{th3.2}.\end{proof}

Likewise, if $F:A\to C$ and
 $\D:C\to \underline{\mathbf{2}\Cat}$ are 2-functors, for any
 objects $c\in C$ and $z\in \D_{\!c}$, we have the 2-functor
 $$
\xymatrix{j_z:\, c\!\downarrow\!F\to \int_AF^*\D}
 $$
given by
$$
 \xymatrix@C=10pt{(a,p) \ar@/^0.8pc/[rr]^{(u,\phi)}
\ar@/_0.8pc/[rr]_-{(u',\phi')}\ar@{}[rr]|{\Downarrow\! \alpha} &  &
(a',p')}\xymatrix@C=12pt{\ar@{|->}[r]^{j_z}&} \xymatrix@C=16pt{(a,p_*z)
 \ar@/^1pc/[rr]^{
(u,\phi_*z)} \ar@/_1pc/[rr]_-{(u',\phi'_*\!z)} \ar@{}[rr]|{\Downarrow
(\alpha,1_{\phi'_*z})}&  & (a',p'_*z)}
$$
and the  result below holds.

\begin{theorem}\label{tch2o}
For a $2$-functor $F:A\to C$, the following statements are
equivalent:
\begin{enumerate}
\item[$(a)$] $F:A\to C$ has the property $\mathrm{B}_r$.

\vspace{0.2cm}
\item[$(b)$] For any $2$-functor $\D:C\to \underline{\mathbf{2}\Cat}$, the $2$-functor
$\bar{F}:\int_AF^*\D \to \int_C\D$  has the property
$\mathrm{B}_r$.

\vspace{0.2cm}
\item[$(c)$] For any $2$-functor $\D:C\to
\underline{\mathbf{2}\Cat}$, and any objects $c\in C$ and $z\in
\D_{\!c}$, the commutative square
$$
\xymatrix{c\!\downarrow\!F\ar[r]^{\bar{F}}\ar[d]_{j_z}&c\!\downarrow\!C\ar[d]^{j_z}\\
\int_AF^*\D\ar[r]^{\bar{F}}&\int_C\D
}
$$
is a homotopy pullback.

\vspace{0.2cm}
\item[$(d)$] For any $2$-functor $\D:C\to
\underline{\mathbf{2}\Cat}$, the square \eqref{sqchb}
$$
\xymatrix{\int_AF^*\D\ar[r]^{\bar{F}}\ar[d]_{\pi}&\int_C\D\ar[d]^{\pi}\\
A\ar[r]^{F}&C}
$$
is a homotopy pullback.
\end{enumerate}
\end{theorem}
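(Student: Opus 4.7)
The plan is to mirror, step for step, the argument used for Theorem \ref{tch2l}, swapping the comma 2-categories $F\!\downarrow\!c$ and $\bar{F}\!\downarrow\!(c,z)$ for their duals $c\!\downarrow\!F$ and $(c,z)\!\downarrow\!\bar{F}$, and replacing the action $p^*$ of 1-cells on objects of $\D$ by the covariant action $p_*$. The key construction is, for each object $c\in C$ and each $z\in \D_{\!c}$, an induced 2-functor on homotopy-fibre 2-categories
$$
\bar{\pi}:\, (c,z)\!\downarrow\!\bar{F}\,\longrightarrow\, c\!\downarrow\!F
$$
fitting in the obvious commutative diagram with $\pi$ and $\bar{F}$. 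It sends a typical object $((a,x),(p,v))$, where $p:c\to Fa$ in $C$ and $v:p_*z\to x$ in $\D_{\!F\!a}$, to $(a,p)$, and is defined in the evident way on 1-cells and 2-cells by forgetting the $\D$-data.

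I would then exhibit $\bar{\pi}$ as a retraction with section $i_z:c\!\downarrow\!F\to (c,z)\!\downarrow\!\bar{F}$ given by $i_z(a,p)=((a,p_*z),(p,1_{p_*z}))$, with analogous assignments on 1- and 2-cells. The composite $\bar{\pi}\, i_z$ is the identity, while there is an (op)lax transformation $1\Rightarrow i_z\,\bar{\pi}$ whose component at $((a,x),(p,v))$ is the 1-cell $((1_a,v),(1_p,1_v)):((a,p_*z),(p,1_{p_*z}))\to ((a,x),(p,v))$, with the evident naturality 2-cells built out of the data of a 1-cell in $(c,z)\!\downarrow\!\bar{F}$. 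Invoking Fact \ref{fact1} on classifying spaces then forces $\tb i_z\tb\,\tb\bar{\pi}\tb\simeq 1$, so both $\bar{\pi}$ and $i_z$ are weak equivalences.

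Next, to prove $(b)\Leftrightarrow(c)$, I would factor the square displayed in $(c)$ as the vertical composite of a square with the two vertical $i_z$'s on top of the canonical square $\bar{F}\!\downarrow\!\text{-}\;\to\;\text{-}\!\downarrow\!\bar{F}$ style square for the projections (as in the proof of \ref{tch2l}, but with the ``under'' comma 2-categories). Since the top square has vertical weak equivalences, the bottom square is a homotopy pullback iff the original is, and by Theorem \ref{th3.2} this happens for all $(c,z)$ iff $\bar{F}$ has property $\mathrm{B}_r$. The equivalence $(a)\Leftrightarrow(b)$ then follows from the commutativity of
$$
\xymatrix@C=34pt{(c',z')\!\downarrow\!\bar{F}\ar[d]_{\bar{\pi}}\ar[r]^{\overline{(h,w)}^*}&(c,z)\!\downarrow\!\bar{F}\ar[d]^{\bar{\pi}}\\ c'\!\downarrow\!F\ar[r]^{\bar{h}^*}& c\!\downarrow\!F}
$$
whose vertical arrows are weak equivalences: choosing a $\D$ with $\D_{\!c'}\neq\emptyset$ for every $c'\in C$ (e.g.\ the terminal 2-functor, or alternatively $C(c',-)$) shows that the $\bar{h}^*$'s are weak equivalences iff all the $\overline{(h,w)}^*$'s are.

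Finally, for $(c)\Rightarrow(d)$, I would paste
$$
\xymatrix@C=22pt{c\!\downarrow\!F\ar[r]^{j_z}\ar[d]_{\bar{F}}&\int_AF^*\D\ar[d]_{\bar{F}}\ar[r]^{\pi}&A\ar[d]^{F}\\ c\!\downarrow\!C\ar[r]^{j_z}&\int_C\D\ar[r]^{\pi}&C,}
$$
whose composite is the right square in \eqref{sqchac}. The left square is a homotopy pullback by hypothesis, and, since $(c)\Rightarrow(a)$ is already established, Theorem \ref{th3.2} makes the outer rectangle a homotopy pullback; the homotopy-fibre characterisation then forces the right square to be a homotopy pullback too. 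The reverse implication $(d)\Rightarrow(a)$ is obtained by specialising to $\D=C(c,-):C\to \underline{\Cat}\subseteq\underline{\mathbf{2}\Cat}$, which identifies $\int_AF^*\D\cong c\!\downarrow\!F$ and $\int_C\D\cong c\!\downarrow\!C$, reducing the hypothesis back to the right square in \eqref{sqchac} and hence to $\mathrm{B}_r$ by Theorem \ref{th3.2}. The only genuinely delicate step is verifying that the candidate oplax transformation $1\Rightarrow i_z\,\bar{\pi}$ is really oplax-natural, i.e.\ checking the naturality 2-cells and the associativity/unit coherences in the covariant setting; everything else is a straightforward dualisation of the argument given for Theorem \ref{tch2l}.
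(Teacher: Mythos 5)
Your proposal is correct and follows essentially the same route as the paper, which states Theorem \ref{tch2o} without proof as the evident dual of Theorem \ref{tch2l}; your step-by-step dualisation (the retraction $\bar{\pi}:(c,z)\!\downarrow\!\bar{F}\to c\!\downarrow\!F$ with section $i_z$, the factorisation of the square in $(c)$, the pasting argument for $(c)\Rightarrow(d)$, and the specialisation $\D=C(c,-)$ for $(d)\Rightarrow(a)$) is exactly what is intended. One trivial slip: the component $((1_a,v),(1_p,1_v)):i_z\bar{\pi}(O)\to O$ you write down defines an oplax transformation $i_z\,\bar{\pi}\Rightarrow 1$ rather than $1\Rightarrow i_z\,\bar{\pi}$, but Fact \ref{fact1} applies either way, so the argument is unaffected.
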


A main consequence is the corollary below.

\begin{corollary}[Homotopy Cofinality Theorem] Let $F:A\to C$ be a $2$-functor
  between $2$-categories. The  statements below are equivalent.
\begin{enumerate}
\item[$(a)$] $F:A\to C$ is homotopy left (resp. right) cofinal.

\vspace{0.2cm}
\item[$(b)$] For any $2$-functor $\D:C^{\mathrm{op}}\to \underline{\mathbf{2}\Cat}$
(resp. $\D:C\to \underline{\mathbf{2}\Cat}$) the induced
$2$-functor $\bar{F}:\int_AF^*\D \to \int_C\D$  is homotopy left
(resp. right) cofinal.

\vspace{0.2cm}
\item[$(c)$] For any $2$-functor $\D:C^{\mathrm{op}}\to \underline{\mathbf{2}\Cat}$
  (resp. $\D:C\to \underline{\mathbf{2}\Cat}$) the induced
$2$-functor $\bar{F}:\int_AF^*\D \to \int_C\D$  is a weak
equivalence.
\end{enumerate}
\end{corollary}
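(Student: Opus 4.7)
The plan is to treat only the left cofinal case, since the right case follows by a completely parallel argument using Theorem \ref{tch2o} in place of Theorem \ref{tch2l}. I will establish the cycle of implications $(a)\Rightarrow (b)\Rightarrow (c)\Rightarrow (a)$, each of which should follow essentially from material already developed in the paper.

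First, for $(a)\Rightarrow (b)$, I assume $F$ is homotopy left cofinal, so that $F\!\downarrow\! c$ is weakly contractible for every object $c$ of $C$. Fixing any 2-functor $\D:C^{\mathrm{op}}\to \underline{\mathbf{2}\Cat}$ and any object $(c,z)$ of $\int_C\D$, my task is to show that $\bar{F}\!\downarrow\!(c,z)$ is weakly contractible. The key tool is already built: in the proof of Theorem \ref{tch2l} a retraction $\bar{\pi}:\bar{F}\!\downarrow\!(c,z)\to F\!\downarrow\! c$ was exhibited, together with a section $i_z$ and an oplax transformation $1\Rightarrow i_z\,\bar{\pi}$, so via Fact \ref{fact1} the 2-functor $\bar{\pi}$ is a weak equivalence. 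Since its target has a contractible classifying space, so does $\bar{F}\!\downarrow\!(c,z)$, giving $(b)$.

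The implication $(b)\Rightarrow (c)$ is immediate from Corollary \ref{corta}, which states that every homotopy left cofinal 2-functor between 2-categories is a weak equivalence; apply it to $\bar{F}$.

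For $(c)\Rightarrow (a)$, I will specialize the hypothesis to the representable 2-functor $\D = C(-,c):C^{\mathrm{op}}\to \underline{\Cat}\subseteq \underline{\mathbf{2}\Cat}$ for a fixed but arbitrary object $c\in C$. By the very definition of the homotopy-fibre 2-categories in Section 6 (namely $F\!\downarrow\! c = \int_A F^*C(-,c)$ and $C\!\downarrow\! c = \int_C C(-,c)$), the induced 2-functor $\bar{F}$ identifies with the canonical $F\!\downarrow\! c\to C\!\downarrow\! c$. By hypothesis this is a weak equivalence, and by Lemma \ref{lcont} the slice $C\!\downarrow\! c$ is weakly contractible; hence $F\!\downarrow\! c$ is weakly contractible too, proving that $F$ is homotopy left cofinal. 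The only step carrying any technical content is $(a)\Rightarrow(b)$, and there is no new calculation to make: everything is packaged inside the weak equivalence \eqref{eqbp} already produced during the proof of Theorem \ref{tch2l}.
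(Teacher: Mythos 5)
Your proposal is correct and follows essentially the same route as the paper: the $(a)$/$(b)$ link via the weak equivalences $\bar{\pi}:\bar{F}\!\downarrow\!(c,z)\to F\!\downarrow\!c$ established in the proof of Theorem \ref{tch2l} (resp. \ref{tch2o}), $(b)\Rightarrow(c)$ by Corollary \ref{corta}, and $(c)\Rightarrow(a)$ by specializing to $\D=C(-,c)$ and invoking Lemma \ref{lcont}. The only cosmetic difference is that you close a one-directional cycle where the paper asserts $(a)\Leftrightarrow(b)$ outright, and you helpfully make explicit which ingredient of Theorem \ref{tch2l} is actually being used.
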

\begin{proof}
The equivalence $(a)\Leftrightarrow (b)$ follows from Theorems
\ref{tch2l} and \ref{tch2o}. The implication $(b)\Rightarrow (c)$
follows from Corollary \ref{corta}. To prove the remaining
$(c)\Rightarrow (a)$, take, for any object $c$ of $\C$, the
2-functor $\D=C(-,c):C^{\mathrm{op}}\to \underline{\Cat}\subseteq
\underline{\mathbf{2}\Cat}$. Then, by hypothesis, the 2-functor
$$ \xymatrix@R=10pt{\int_AF^*C(-,c)\ar@{=}[d]\ar[r]^{\bar{F}}&
\int_CC(-,c)\ar@{=}[d]\\
F\!\downarrow\!c\ar[r]^{\bar{F}}&C\!\downarrow\!c} $$
is a weak equivalence. Therefore,  $F\!\downarrow\!c$ is weakly
contractible as
 $C\!\downarrow\!c$ is, by Lemma \ref{lcont}.
\end{proof}

Next, we show conditions on a 2-category $C$ in order for the square
\eqref{sqchb} to always be a homotopy pullback (cf. ~\cite[Theorem
3.8]{CHR2015}).

\begin{corollary}\label{corfin} Let $C$ be a $2$-category. Then, the following
properties are equivalent:
\begin{enumerate}

\vspace{0.1cm}
\item[$(i)$] For any 1-cell $h:c\to c'$ and any object $x$ of $C$,
the functor $h_*:C(x,c)\to C(x,c')$ is a weak equivalence.

\vspace{0.1cm}
\item[$(i')$] For any 1-cell $h:c\to c'$ and any object $x$ of $C$,
the functor $h^*:C(c',x)\to C(c,x)$ is a weak equivalence.

\vspace{0.1cm}
\item[$(ii)$] For any two objects $c,c'\in C$, the canonical square
$$
\xymatrix{C(c,c')\ar[r]^{in}\ar[d]& C\!\downarrow c'\ar[d]^{\pi}\\
\mathrm{pt}\ar[r]^{c}&C
}
$$
is a homotopy pullback\footnote{This implies that, for any object $c\in C$,
$C(c,c)=\Omega(C,c)$; that is,  the
category $C(c,c)$ is a loop object for the pointed
2-category $(C,c)$.}.

 \vspace{0.1cm}
\item[$(ii')$] For any two objects $c,c'\in C$, the canonical square
$$
\xymatrix{C(c,c')\ar[r]^{in}\ar[d]& c\!\downarrow C\ar[d]^{\pi}\\
\mathrm{pt}\ar[r]^{c'}&C
}
$$
is a homotopy pullback.

\vspace{0.1cm}
\item[$(iii)$] For any $2$-functor $\D:C\to
\underline{\mathbf{2}\Cat}$, the $2$-functor ${h}_*:\D_{\!c}\to
\D_{\!c'}$ induced for any 1-cell $h:c\to c'$ of $C$ is a weak
equivalence.

\vspace{0.1cm}
\item[$(iii')$] For any $2$-functor $\D:C^{\mathrm{op}}\to
\underline{\mathbf{2}\Cat}$, the  $2$-functor
${h}^*:\D_{\!c'}\to \D_{\!c}$ induced for any 1-cell $h:c\to c'$
of $C$ is a weak equivalence.

\vspace{0.1cm}

\item[$(iv)$]  For any $2$-functors $F:A\to C$ and
$\D:C\to \underline{\mathbf{2}\Cat}$, the square
$$
\xymatrix{\int_AF^*\D\ar[r]^{\bar{F}}\ar[d]_{\pi}&\int_C\D\ar[d]^{\pi}\\
A\ar[r]^{F}&C
}
$$
is a homotopy pullback.

\vspace{0.1cm}
\item[$(iv')$]  For any $2$-functors $F:A\to C$ and
$\D:C^{\mathrm{op}}\to \underline{\mathbf{2}\Cat}$, the square
$$
\xymatrix{\int_AF^*\D\ar[r]^{\bar{F}}\ar[d]_{\pi}&\int_C\D\ar[d]^{\pi}\\
A\ar[r]^{F}&C
}
$$
is a homotopy pullback.

\vspace{0.1cm}
\item[$(v)$] Any $2$-functor $F:A\to C$ has the property
$\mathrm{B}_r$.

\vspace{0.1cm}
\item[$(v')$] Any $2$-functor $F:A\to C$ has the property
$\mathrm{B}_l$.

\end{enumerate}

\end{corollary}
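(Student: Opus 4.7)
The plan is to show that all ten conditions are equivalent by applying the structural theorems of the paper to representable 2-functors, and closing the cycle with one non-trivial observation.

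First, I would obtain the ``representable'' equivalences (i)$\Leftrightarrow$(ii') and (i')$\Leftrightarrow$(ii) in a single step each. Applying Corollary \ref{hpdia} (covariant form) to $\D=C(c,-):C\to\underline{\mathbf{2}\Cat}$ for each $c\in C$, and noting that $\int_CC(c,-)=c\!\downarrow\! C$, identifies its conditions (a) and (c) with (ii') and (i) respectively; the contravariant form applied to $\D=C(-,c'):C^{\mathrm{op}}\to\underline{\mathbf{2}\Cat}$ likewise identifies them with (ii) and (i').

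Next I would assemble the two ``structural'' chains. Quantifying the equivalence (a)$\Leftrightarrow$(b) of Theorem \ref{qlemm} over all $\D$ yields (iv)$\Leftrightarrow$(iii) and (iv')$\Leftrightarrow$(iii'); quantifying the equivalences (a)$\Leftrightarrow$(d) of Theorems \ref{tch2o} and \ref{tch2l} over all $F:A\to C$ yields (v)$\Leftrightarrow$(iv) and (v')$\Leftrightarrow$(iv'). This produces the two chains (iii)$\Leftrightarrow$(iv)$\Leftrightarrow$(v) and (iii')$\Leftrightarrow$(iv')$\Leftrightarrow$(v').

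To bridge the primed and unprimed sides, I would use four trivial specializations: (iii)$\Rightarrow$(i) and (iii')$\Rightarrow$(i') follow from taking $\D=C(x,-)$ and $\D=C(-,x)$ respectively; (v)$\Rightarrow$(i') and (v')$\Rightarrow$(i) follow by taking $F$ to be the 2-functor $x:\mathrm{pt}\to C$ selecting an object, since then $c\!\downarrow\! F\cong C(c,x)$ and $F\!\downarrow\! c\cong C(x,c)$, so that property $\mathrm{B}_r$ (resp.\ $\mathrm{B}_l$) for $F=x$ is verbatim (i') (resp.\ (i)).

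The non-trivial step that closes every cycle is (i)$\Rightarrow$(v), with (i')$\Rightarrow$(v') dual. Under (i), the covariant representable $\D=C(x,-)$ satisfies condition (b) of Theorem \ref{qlemm} for every object $x$, so (b)$\Rightarrow$(a) forces the square \eqref{sqchb} with $\D=C(x,-)$ to be a homotopy pullback for every $F:A\to C$; but this square is exactly the right-hand square of \eqref{sqchac} with $c$ replaced by $x$, and Theorem \ref{th3.2} therefore forces every such $F$ to have property $\mathrm{B}_r$, which is (v). The dual argument with $\D=C(-,x)$ and the contravariant halves of Theorems \ref{qlemm} and \ref{th3.2} gives (i')$\Rightarrow$(v'). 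Chaining these together yields (i)$\Rightarrow$(v)$\Rightarrow$(i')$\Rightarrow$(v')$\Rightarrow$(i), which merges the two structural chains and collapses all ten conditions into a single equivalence class. The only conceptual hurdle I anticipate is recognizing that condition (i) is precisely condition (b) of Theorem \ref{qlemm} evaluated on the hom 2-functor $C(x,-)$; once this is spotted, the whole corollary reduces to a short diagrammatic chase through theorems already proved in the paper.
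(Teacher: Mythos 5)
Your proposal is correct: every implication you invoke is a legitimate instantiation of Theorems \ref{th3.2}, \ref{qlemm}, \ref{tch2l}, \ref{tch2o} or Corollary \ref{hpdia}, and the cycle $(i)\Rightarrow(v)\Rightarrow(i')\Rightarrow(v')\Rightarrow(i)$ together with the two structural chains and the representable equivalences does collapse all ten conditions. The machinery is the same as the paper's --- in particular your key step $(i)\Rightarrow(v)$, via condition $(b)$ of Theorem \ref{qlemm} applied to $C(x,-)$ followed by Theorem \ref{th3.2}, is verbatim the paper's argument --- but your implication graph is wired differently in two places. First, you pair $(i)\Leftrightarrow(ii')$ and $(i')\Leftrightarrow(ii)$ by feeding the representables into Corollary \ref{hpdia}, whereas the paper pairs $(i)\Leftrightarrow(ii)$ by observing that $(i)$ says exactly that every point $2$-functor $c:\mathrm{pt}\to C$ has property $\mathrm{B}_l$ and then quoting Theorem \ref{th3.2}; both pairings are valid, yours is arguably the more uniform since it reuses \ref{hpdia} rather than re-deriving it. Second, and more substantively, your bridge between the primed and unprimed families runs through the point $2$-functors ($(v)\Rightarrow(i')$ because $\mathrm{B}_r$ for $x:\mathrm{pt}\to C$ is literally $(i')$), forcing you to traverse the full cycle $(i)\Rightarrow(v)\Rightarrow(i')\Rightarrow(v')$; the paper instead bridges in one stroke by applying $(iii)$ to the homotopy-fibre $2$-diagram $F\!\downarrow\!-:C\to\underline{\mathbf{2}\Cat}$, which immediately yields $\mathrm{B}_l$ for $F$, i.e.\ $(iii)\Rightarrow(v')$. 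Your route buys a more elementary bridge (only representables and points are tested); the paper's buys economy by exploiting that the fibre $2$-functors are themselves admissible test diagrams for $(iii)$. Either way the corollary follows, and your redundant implications $(iii)\Rightarrow(i)$, $(iii')\Rightarrow(i')$ are harmless.
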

\begin{proof}

$(i)\Leftrightarrow (ii)$\footnote{The implication $(i)\Rightarrow
(ii)$ was proven by Del Hoyo in ~\cite[Theorem 8.5]{delHoyo2012}.}:
Let $c:\mathrm{pt}\to C$ be the $2$-functor given for any object
$c\in C$. Then, for any object $x\in C$, there is quite an obvious
natural isomorphism
$$\xymatrix{c\!\downarrow\!x=\int_{\mathrm{pt}}c^*C(-,x)\cong C(c,x)}$$
between the homotopy-fibre 2-category (actually a category) of
$c:\mathrm{pt}\to C$ over $x$ and the hom-category $C(c,x)$. Then,
 any 1-cell $h:x\to y$ in $C$ induces a weak equivalence
$h_*:c\!\downarrow\!x\to c\!\downarrow\!y$  if and only if the
induced $h_*:C(c,x)\to C(c,y)$ is a weak equivalence. It follows
that the 2-functor $c:\mathrm{pt}\to C$ has the property
$\mathrm{B}_l$ if and only if the hypothesis in $(i)$ holds. On
the other hand, by Theorem \ref{th3.2}, the 2-functor
$c:\mathrm{pt}\to C$ has the property $\mathrm{B}_l$ if and only if,
for any object $c'\in C$, the square
$$
\xymatrix{c\!\downarrow\!c'\ar[r]\ar[d]&C\!\downarrow\!c'\ar[d]&
\ar@{}@<-28pt>[d]|{\textstyle \cong} C(c,c')\ar[r]\ar[d]&
 C\!\downarrow\!c'\ar[d] \\
\mathrm{pt}\ar[r]^c&C&\mathrm{pt}\ar[r]^c&C
}
$$
is a homotopy pullback, that is, if and only if $(ii)$ holds.

$(iii)\Rightarrow (i)$: For any object $x\in C$, the result follows
by applying the
 hypothesis in $(iii)$ to the 2-functor  $\D=C(x,-):C\to \underline{\Cat}$.

$(i)\Rightarrow (v)$: by Theorem \ref{qlemm}, for any 2-functor
$F:A\to C$ and any object $c\in C$, the square
$$
\xymatrix{\int_AF^*C(c,-)\ar[r]\ar[d]&\int_CC(c,-)\ar[d]&
\ar@{}@<-28pt>[d]|{\textstyle =} c\!\downarrow\!F\ar[r]\ar[d]&
 c\!\downarrow\!C\ar[d] \\
A\ar[r]^F&C&A\ar[r]^F&C
}
$$
is a homotopy pullback. Then, $F$ has the property $\mathrm{B}_r$ by
Theorem \ref{th3.2}.

$(v)\Rightarrow (iv)$: This follows from Theorem \ref{tch2o}.

$(iv)\Rightarrow(iii)$: This follows from Theorem \ref{qlemm}.

Thus, we have $(i)\Leftrightarrow (ii)\Leftrightarrow
(iii)\Leftrightarrow (iv)\Leftrightarrow (v)$ and, similarly, we
also have the equivalences  $(i')\Leftrightarrow
(ii')\Leftrightarrow (iii')\Leftrightarrow (iv')\Leftrightarrow
(v')$.

Furthermore, for any given 2-functor $F:A\to C$, the application of
the hypothesis in $(iii)$ to the homotopy-fibre 2-functor
$F\!\downarrow\!-:C\to \underline{\mathbf{2}\Cat}$ just says that
$F$ has the property $\mathrm{B}_l$. Hence, $(iii)\Rightarrow (v')$.
Likewise, for any 2-functor $F:A\to C$, the hypothesis on
$-\!\downarrow\!F:C^{\mathrm{op}}\to \underline{\mathbf{2}\Cat}$ implies that
$F$ has the property $\mathrm{B}_r$, whence $(iii')\Rightarrow (v)$,
and the proof is complete.
\end{proof}

To finish, let us remark that the class of 2-categories satisfying
the conditions in Corollary \ref{corfin} above includes those
2-categories $C$ where, for each 1-cell $f:c\to c'$, there exists a
1-cell $f':c'\to c$ such that $[f'\circ f]=[1_c]\in \pi_0C(c,c)$ and
$[f\circ f']=[1_{c'}]\in \pi_0C(c',c')$.  In particular, the result
applies to 2-groupoids, whose 1-cells are all invertible, which,
recall, are equivalent to  crossed modules over groupoids by Brown
and Higgins's ~\cite{B-H1981}.

\bibliography{library}{}
\bibliographystyle{amsplain}

\end{document}